\newtheorem{theorem}{Theorem}
\newtheorem{corollary}[theorem]{Corollary}
\newtheorem{lemma}[theorem]{Lemma}
\newtheorem{remark}[theorem]{Remark}
\newenvironment{proof}[1][Proof]{\noindent\textbf{#1.} }{\ \rule{0.5em}{0.5em}}
\begin{document}

\begin{center}
Some classes of generating functions for generalized Hermite- and
Chebyshev-type polynomials: Analysis of Euler's formula

\bigskip

Neslihan Kilar and Yilmaz Simsek

\bigskip

Department of Mathematics, Faculty of Science University of Akdeniz TR-07058
Antalya-TURKEY\\[0pt]

{E-mail: neslihankilar@gmail.com and ysimsek@akdeniz.edu.tr}

\bigskip
\end{center}

\textbf{Abstract }

\bigskip

The aim of this paper is to construct generating functions for new families
of special polynomials including the Appel polynomials, the Hermite-Kamp\`{e}
de F\`{e}riet polynomials, the Milne-Thomson type polynomials, parametric
kinds of Apostol type numbers and polynomials. Using Euler's formula,
relations among special functions, Hermite-type polynomials, the Chebyshev
polynomials and the Dickson polynomials are given. Using generating
functions and their functional equations, various formulas and identities
are given. With help of computational formula for new families of special
polynomials, some of their numerical values are given. Using hypegeometric
series, trigonometric functions and the Euler's formula, some applications
related to Hermite-type polynomials are presented. Finally, further remarks,
observations and comments about generating functions for new families of
special polynomials are given.

\textbf{Keywords:} Appel polynomials, Apostol-Bernoulli type polynomials,
Apostol-Euler type polynomials, Hermite-type polynomials, Parametric kinds
of Apostol-kind polynomials, Chebyshev polynomials, Dickson polynomials,
Milne-Thomson type polynomials, Generating function, Functional equation,
Special functions.

\textbf{MSC Numbers:} 05A15, 11B68, 11B73, 26C05, 33B10

\section{Introduction}

The Euler's formula yields a important connection among analysis,
trigonometry and special functions. This formula also gives relations
between trigonometric functions and exponential functions. Because sine and
cosine functions are written as sums of the exponential functions.
Motivation of this paper is to construct generating functions for new
families of polynomials with the help of the Euler's formula. By using these
generating functions and their functional equations, new formulas,
identities, recurrence relations and properties of these polynomials, which
are the Appel polynomials, Apostol-type polynomials, Hermite-type
polynomials, the Chebyshev polynomials, the Dickson polynomials,
Milne-Thomson type polynomials, are given. Trigonometric functions, the
Euler's formula and generating functions have applications in many different
areas, which are mainly mathematics, statistics, physics, engineering and
other sciences. Therefore, it can be stated that the results of this article
may be used and applied in these related areas.

Notations and definitions of this paper are presented as follows:

Let $%
\mathbb{N}
$, $%
\mathbb{Z}
$, $%
\mathbb{R}
$, and $%
\mathbb{C}
$ denote the set of positive integers, the set of integers, the set of real
numbers, and the set of complex numbers, respectively, $%
\mathbb{N}
_{0}=%
\mathbb{N}
\cup \left\{ 0\right\} $. Let $\lambda $, $v\in 
\mathbb{C}
$. $\left( \alpha \right) _{n}$, denotes the Pochhammer symbol, is defined by%
\[
\left( \alpha \right) _{v}=\frac{\Gamma \left( \alpha +v\right) }{\Gamma
\left( \alpha \right) }=\left\{ 
\begin{array}{cc}
\alpha \left( \alpha +1\right) \left( \alpha +2\right) ...\left( \alpha
+v-1\right)  & v=n\in 
\mathbb{N}
\text{, }\alpha \in 
\mathbb{C}
\\ 
1 & v=0\text{, }\alpha \in 
\mathbb{C}
\setminus \left\{ 0\right\} 
\end{array}%
\right. 
\]%
where $\Gamma \left( \alpha \right) $\ dentes the Euler gamma function.%
\[
\binom{\alpha }{v}=\left\{ 
\begin{array}{cc}
\frac{\alpha \left( \alpha -1\right) \left( \alpha -2\right) ...\left(
\alpha -v+1\right) }{v!} & v\in 
\mathbb{N}
\text{, }\alpha \in 
\mathbb{C}
\\ 
1 & v=0%
\end{array}%
\right. 
\]%
and%
\[
\left( \alpha \right) ^{\underline{v}}=\left( -1\right) ^{v}\left( -\alpha
\right) _{v}.
\]%
Let%
\[
w=x+iy=(x,y)
\]%
and%
\[
\overline{w}=x-iy=(x,-y),
\]%
where%
\[
x=\func{Re}\left\{ w\right\} ,\text{ }y=\func{Im}\left\{ w\right\} 
\]%
and%
\[
i^{2}=-1.
\]%
Here, $\ln w$ takes its principal value such that%
\[
\ln (w):=\ln \left\vert w\right\vert +i\arg (w),
\]%
with $\left\vert w\right\vert >0$, $-\pi <\arg (w)<\pi $. In addition,%
\[
\exp \left( t\right) =e^{t}.
\]%
The Euler's formula, well-known mathematical formula in complex analysis, is
given by%
\[
\exp (iz)=\cos (z)+i\sin (z).
\]%
This formula gives the fundamental relationship between the trigonometric
functions and the complex exponential function (\textit{cf}. \cite{comtet}, 
\cite{Srivastava}, \cite{SrivastavaChoi2}).

The following generating functions for well-known numbers and polynomials
are needed in order give main results of this paper.

Generating function for the Apostol-Bernoulli polynomials $%
B_{n}^{(k)}(x;\lambda )$ of order $k$ is given by%
\begin{equation}
F_{AB}\left( t,x;\lambda ,k\right) =\left( \frac{t}{\lambda \exp \left(
t\right) -1}\right) ^{k}\exp \left( xt\right) =\sum_{n=0}^{\infty }\mathcal{B%
}_{n}^{(k)}(x;\lambda )\frac{t^{n}}{n!},  \label{ApostolB}
\end{equation}%
where $\left\vert t\right\vert <2\pi $, when $\lambda =1;$ $\left\vert
t\right\vert <\left\vert \log \lambda \right\vert $ when $\lambda \neq 1$.
Using (\ref{ApostolB}), we have%
\[
B_{n}^{\left( k\right) }\left( x\right) =\mathcal{B}_{n}^{(k)}(x;1) 
\]%
and%
\[
\mathcal{B}_{n}^{(k)}(\lambda )=\mathcal{B}_{n}^{(k)}(0;\lambda ), 
\]%
where $B_{n}^{\left( k\right) }\left( x\right) $ and $\mathcal{B}%
_{n}^{(k)}(\lambda )$ denote the Bernoulli polynomials of order $k$ and the
Apostol-Bernoulli numbers of order $k$, respectively (\textit{cf}. \cite%
{SrivastavaChoi2}, \cite{Srivastava2018}).

Generating function for the Apostol-Euler polynomials $\mathcal{E}%
_{n}^{(k)}(x;\lambda )$ of order $k$ is given by%
\begin{equation}
F_{AE}\left( t,x;\lambda ,k\right) =\left( \frac{2}{\lambda \exp \left(
t\right) +1}\right) ^{k}\exp \left( xt\right) =\sum_{n=0}^{\infty }\mathcal{E%
}_{n}^{(k)}(x;\lambda )\frac{t^{n}}{n!},  \label{ApostolE}
\end{equation}%
where $\left\vert t\right\vert <\left\vert \log \left( -\lambda \right)
\right\vert $. Using (\ref{ApostolE}), we have%
\[
E_{n}^{\left( k\right) }\left( x\right) =\mathcal{E}_{n}^{(k)}(x;1) 
\]%
and%
\[
\mathcal{E}_{n}^{(k)}(\lambda )=\mathcal{E}_{n}^{(k)}(0;\lambda ), 
\]%
where $E_{n}^{\left( k\right) }\left( x\right) $ and $\mathcal{E}%
_{n}^{(k)}(\lambda )$ denote the Euler polynomials of order $k$ and the
Apostol-Euler numbers of order $k$, respectively (\textit{cf}. \cite%
{SrivastavaChoi2}, \cite{Srivastava2018}).

Generating functions for the polynomials $C_{n}(x,y)$ and $S_{n}(x,y)$ are
defined as follows, respectively%
\begin{equation}
F_{C}\left( t,x,y\right) =\exp \left( xt\right) \cos \left( yt\right)
=\sum_{n=0}^{\infty }C_{n}(x,y)\frac{t^{n}}{n!},  \label{6a}
\end{equation}%
and%
\begin{equation}
F_{S}\left( t,x,y\right) =\exp \left( xt\right) \sin \left( yt\right)
=\sum_{n=0}^{\infty }S_{n}(x,y)\frac{t^{n}}{n!}  \label{6b}
\end{equation}%
(\textit{cf.} \cite{kilar}, \cite{KimRyoo}, \cite{masjed2016}, \cite%
{masjed2018}, \cite{masjedgenocchi}, \cite{Srivastava2018}).

By using equations (\ref{6a}) and (\ref{6b}), we have%
\begin{equation}
C_{n}(x,y)=\sum\limits_{k=0}^{\left[ \frac{n}{2}\right] }\left( -1\right)
^{k}\binom{n}{2k}x^{n-2k}y^{2k}  \label{7a}
\end{equation}%
and%
\begin{equation}
S_{n}(x,y)=\sum\limits_{k=0}^{\left[ \frac{n-1}{2}\right] }\left( -1\right)
^{k}\binom{n}{2k+1}x^{n-2k-1}y^{2k+1}  \label{7b}
\end{equation}%
(\textit{cf.} \cite{kilar}, \cite{KimRyoo}, \cite{masjed2016}, \cite%
{masjed2018}, \cite{masjedgenocchi}, \cite{Srivastava2018}).

Generating functions for the Chebyshev polynomials of the first and second
kinds\ are given as follows, respectively%
\begin{equation}
\frac{1-xt}{1-2xt+t^{2}}=\sum\limits_{n=0}^{\infty }T_{n}\left( x\right)
t^{n}  \label{Cp1}
\end{equation}%
and%
\begin{equation}
\frac{1}{1-2xt+t^{2}}=\sum\limits_{n=0}^{\infty }U_{n}\left( x\right) t^{n}
\label{Cp2}
\end{equation}%
(\textit{cf.} \cite{Abramowtz}, \cite{benjamin}, \cite{comtet}, \cite{Fox}, 
\cite{Rivlin}).

By using (\ref{Cp1}) and (\ref{Cp2}), the following well-known relations
between the polynomials $T_{n}\left( x\right) $ and $U_{n}\left( x\right) $
are given%
\begin{equation}
T_{n}\left( x\right) =U_{n}\left( x\right) -xU_{n-1}\left( x\right)
\label{TileU}
\end{equation}%
and%
\begin{equation}
T_{n+1}\left( x\right) =xT_{n}\left( x\right) -\left( 1-x^{2}\right)
U_{n-1}\left( x\right) .  \label{TileU2}
\end{equation}

By using (\ref{Cp1}) and (\ref{Cp2}), the well-known computational formulas
for the Chebyshev polynomials of the first and second kinds are given as
follows, respectively%
\begin{equation}
T_{n}\left( x\right) =\sum\limits_{k=0}^{\left[ \frac{n}{2}\right] }\binom{n%
}{2k}\left( x^{2}-1\right) ^{k}x^{n-2k}  \label{Ce1}
\end{equation}%
and%
\begin{equation}
U_{n-1}\left( x\right) =\sum\limits_{k=0}^{\left[ \frac{n-1}{2}\right] }%
\binom{n}{2k+1}\left( x^{2}-1\right) ^{k}x^{n-2k-1}  \label{Ce2}
\end{equation}%
(\textit{cf.} \cite{Abramowtz}, \cite{benjamin}, \cite{comtet}, \cite{Fox}, 
\cite{Rivlin}).

Generating functions for the Dickson polynomials of the first and second
kinds are given as follows, respectively%
\begin{equation}
\frac{1-2xt}{1-xt+\alpha t^{2}}=\sum\limits_{n=0}^{\infty }D_{n}\left(
x,\alpha \right) t^{n}  \label{d1}
\end{equation}%
and%
\begin{equation}
\frac{1}{1-xt+\alpha t^{2}}=\sum\limits_{n=0}^{\infty }\mathfrak{E}%
_{n}\left( x,\alpha \right) t^{n}  \label{d2}
\end{equation}%
(\textit{cf. }\cite{Dickson}, \cite{Lidl}, \cite{rasasisas}). The
polynomials $D_{n}\left( x,\alpha \right) $ and $\mathfrak{E}_{n}\left(
x,\alpha \right) $ are of degree $n$ in $x$ with real parameter $\alpha $.

By using (\ref{d1}) and (\ref{d2}), the following well-known relation
between the polynomials $D_{n}\left( x,\alpha \right) $ and $\mathfrak{E}%
_{n}\left( x,\alpha \right) $ is given%
\[
D_{n}\left( x,\alpha \right) =\mathfrak{E}_{n}\left( x,\alpha \right) -2x%
\mathfrak{E}_{n-1}\left( x,\alpha \right) . 
\]

Substituting $\alpha =1$ into (\ref{d1}) and (\ref{d2}), we have the
following relations, respectively:%
\begin{equation}
D_{n}\left( x,1\right) =2T_{n}\left( \frac{x}{2}\right) ,  \label{Dickson}
\end{equation}%
and%
\begin{equation}
\mathfrak{E}_{n}\left( x,1\right) =U_{n}\left( \frac{x}{2}\right)
\label{dickson2}
\end{equation}%
(\textit{cf. }\cite{Dickson}).

Generating functions for the Milne-Thomson type polynomials is given by%
\begin{equation}
R\left( t,x,y,z;a,b,v\right) =\left( b+f\left( t,a\right) \right) ^{z}\exp
\left( tx+yh\left( t,v\right) \right) =\sum\limits_{n=0}^{\infty
}y_{6}\left( n;x,y,z;a,b,v\right) \frac{t^{n}}{n!},  \label{y6}
\end{equation}%
where $f\left( t,a\right) $ is a number of family of analytic functions or
meromorphic functions, $h\left( t,v\right) $ any analytic function, $a,b\in 
\mathbb{R}
$ and $v\in 
\mathbb{N}
$ (\textit{cf}. \cite{SimsekRevista}).

Note that there is one generating function for each value of $a$, $b$ and $v$%
.

Substituting $y=0$ into (\ref{y6}), we have the Appell polynomials which are
defined by%
\[
\left( b+f\left( t,a\right) \right) ^{z}\exp \left( tx\right)
=\sum\limits_{n=0}^{\infty }y_{6}\left( n;x,0,z;a,b,v\right) \frac{t^{n}}{n!}%
,
\]%
where $\left( b+f\left( t,a\right) \right) ^{z}=\sum\limits_{n=0}^{\infty
}\vartheta _{n}^{(z)}(a,b)t^{n}$ is a formal power series and%
\[
y_{6}\left( n;x,0,z;a,b,v\right) =y_{6}\left( n;x,z;a,b\right) .
\]

Setting $x=y=0$ into (\ref{y6}), we obtain generating functions for special
numbers of order $z$:%
\begin{equation}
R_{1}\left( t,z;a,b\right) =\left( b+f\left( t,a\right) \right)
^{z}=\sum\limits_{n=0}^{\infty }y_{6}^{(z)}\left( n;a,b\right) \frac{t^{n}}{%
n!}.  \label{y66a}
\end{equation}%
Therefore, we have%
\[
y_{6}\left( n;0,0,z;a,b,v\right) =y_{6}^{(z)}\left( n;a,b\right) .
\]%
For instance, substituting $b=0$ and $f\left( t,a\right) =\frac{t}{a\exp
(t)-1}$ into (\ref{y6}), we have 
\[
y_{6}^{(z)}\left( n;a,0,v\right) =\mathcal{B}_{n}^{(z)}(a).
\]

Generating function for the Hermite-Kamp\`{e} de F\`{e}riet (or
Gould-Hopper) polynomials, $H_{n}^{\left( j\right) }\left( x,y\right) $ is
given by%
\begin{equation}
F_{H}\left( t,x,y,j\right) =\exp \left( xt+yt^{j}\right)
=\sum\limits_{n=0}^{\infty }H_{n}^{\left( j\right) }\left( x,y\right) \frac{%
t^{n}}{n!},  \label{H1}
\end{equation}%
where for $j\in 
\mathbb{N}
$ with $j\geq 2$,%
\[
H_{n}^{\left( j\right) }\left( x,y\right) =\sum\limits_{s=0}^{\left[ \frac{n%
}{2}\right] }\frac{x^{n-js}y^{s}}{\left( n-js\right) !s!}
\]%
(\textit{cf}. \cite{BrettiRicci}, \cite{Dattoli}, \cite{gould}, \cite%
{SrivastavaMonaccha}). It is well-known that the polynomials $H_{n}^{\left(
j\right) }\left( x,y\right) $ are a solution of generalized heat equation.

Generating function for generalized Hermite-Kamp\`{e} de F\`{e}riet
polynomials is given by%
\begin{equation}
F_{R}\left( t,\overrightarrow{u},r\right) =\exp \left(
\sum\limits_{j=1}^{r}u_{j}t^{j}\right) =\sum\limits_{n=0}^{\infty
}H_{n}\left( \overrightarrow{u},r\right) \frac{t^{n}}{n!},  \label{H2}
\end{equation}%
where for $\overrightarrow{u}=\left( u_{1},u_{2},\ldots ,u_{r}\right) $ and%
\begin{equation}
H_{n}\left( \overrightarrow{u},r\right) =\sum\limits_{\Pi _{m}\left( n\mid
r\right) }\frac{n!}{m_{1}!m_{2}!\cdots m_{r}!}\prod%
\limits_{j=1}^{r}u_{j}^{m_{j}}  \label{H2a}
\end{equation}%
such that%
\[
m=\sum\limits_{j=1}^{r}m_{j}, 
\]%
and%
\[
n=\sum\limits_{j=1}^{r}jm_{j} 
\]%
the sum (\ref{H2a}) runs over all restricted partitions $\Pi _{m}\left(
n\mid r\right) $ (containing at most $r$ sizes) of the integer $n$, $m$
denoting the number of parts of the partition and $m_{j}$ the number of
parts of size $j$ (\textit{cf}. see for detail \cite{BrettiRicci}, \cite%
{Dattoli}, \cite{datttoli2}).

Using equation (\ref{H2}), an explicit formula for the polynomials $%
H_{n}\left( \overrightarrow{u},r\right) $ is given by%
\[
H_{n}\left( \overrightarrow{u},r\right) =n!\sum\limits_{j=0}^{\left[ \frac{n%
}{r}\right] }\frac{u_{r}^{j}H_{n-rj}\left( \overrightarrow{u},r-1\right) }{%
j!\left( n-rj\right) !}
\]%
where $\left[ x\right] $ denote the largest integer $\leq x$. (\textit{cf}. 
\cite{Dattoli}, \cite{datttoli2}).

\section{Generating functions for new families of Hermite-type polynomials
and their computation formulas}

In this section, we define generating functions for families of Hermite-type
polynomials. We give some identities and computation formulas for these
polynomials and their generating functions.

Let%
\begin{equation}
\mathcal{G}\left( t,w,\overrightarrow{u},r\right) =\exp \left(
wt+\sum\limits_{j=1}^{r}u_{j}t^{j}\right) =\sum\limits_{n=0}^{\infty }%
\mathcal{K}\left( n;w,\overrightarrow{u},r\right) \frac{t^{n}}{n!},
\label{g1}
\end{equation}%
where $r$-tuples $\overrightarrow{u}=\left( u_{1},u_{2},\ldots ,u_{r}\right) 
$, $w=x+iy=(x,y)$, $u_{1},u_{2},\ldots ,u_{r}$, $x,y\in 
\mathbb{R}
$.

By combining equation (\ref{g1}) with the Euler's formula, we obtain%
\begin{equation}
\mathcal{G}\left( t,w,\overrightarrow{u},r\right) =\exp \left(
xt+\sum\limits_{j=1}^{r}u_{j}t^{j}\right) \left( \cos \left( yt\right)
+i\sin \left( yt\right) \right) =\sum\limits_{n=0}^{\infty }\mathcal{K}%
\left( n;w,\overrightarrow{u},r\right) \frac{t^{n}}{n!}.  \label{ag1}
\end{equation}%
In order to give an explicit formula for the polynomials $\mathcal{K}\left(
n;w,\overrightarrow{u},r\right) $, we give following decompositions of
equation (\ref{ag1})%
\begin{eqnarray}
K_{1}\left( t,x,y,\overrightarrow{u},r\right) &=&\func{Re}\left( \mathcal{G}%
\left( t,w,\overrightarrow{u},r\right) \right) =\exp \left(
xt+\sum\limits_{j=1}^{r}u_{j}t^{j}\right) \cos \left( yt\right)  \label{g1a}
\\
&=&\sum\limits_{n=0}^{\infty }k_{1}\left( n;x,y,\overrightarrow{u},r\right) 
\frac{t^{n}}{n!}  \nonumber
\end{eqnarray}%
and%
\begin{eqnarray}
K_{2}\left( t,x,y,\overrightarrow{u},r\right) &=&\func{Im}\left( \mathcal{G}%
\left( t,w,\overrightarrow{u},r\right) \right) =\exp \left(
xt+\sum\limits_{j=1}^{r}u_{j}t^{j}\right) \sin \left( yt\right)  \label{g2a}
\\
&=&\sum\limits_{n=0}^{\infty }k_{2}\left( n;x,y,\overrightarrow{u},r\right) 
\frac{t^{n}}{n!}.  \nonumber
\end{eqnarray}%
Therefore, by using (\ref{g1a}) and (\ref{g2a}), we get the following
decompositions for the polynomials $\mathcal{K}\left( n;w,\overrightarrow{u}%
,r\right) $:%
\begin{equation}
\mathcal{K}\left( n;w,\overrightarrow{u},r\right) =k_{1}\left( n;x,y,%
\overrightarrow{u},r\right) +ik_{2}\left( n;x,y,\overrightarrow{u},r\right) .
\label{g1f}
\end{equation}

\begin{lemma}
\label{LEMMA1}Let $\overrightarrow{x}=\left( x+u_{1},u_{2},u_{3},\ldots
,u_{r}\right) $ and $\overrightarrow{u}=\left( u_{1},u_{2},u_{3},\ldots
,u_{r}\right) $. Then we have%
\begin{equation}
k_{1}\left( n;x,y,\overrightarrow{u},r\right) =\sum\limits_{j=0}^{\left[ 
\frac{n}{2}\right] }\left( -1\right) ^{j}\binom{n}{2j}y^{2j}H_{n-2j}\left( 
\overrightarrow{x},r\right) .  \label{g1c}
\end{equation}
\end{lemma}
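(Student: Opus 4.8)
The plan is to prove the identity at the level of generating functions and then extract the coefficient of $t^{n}/n!$. The starting point is the decomposition (\ref{g1a}), which expresses $K_{1}\left( t,x,y,\overrightarrow{u},r\right) $ as the product of $\exp \left( xt+\sum_{j=1}^{r}u_{j}t^{j}\right) $ and $\cos \left( yt\right) $. My first step would be to recognize the first factor as a shifted instance of the generating function (\ref{H2}) for the generalized Hermite-Kamp\`{e} de F\`{e}riet polynomials. Indeed, with $\overrightarrow{x}=\left( x+u_{1},u_{2},\ldots ,u_{r}\right) $ one has $\exp \left( xt+\sum_{j=1}^{r}u_{j}t^{j}\right) =\exp \left( \left( x+u_{1}\right) t+\sum_{j=2}^{r}u_{j}t^{j}\right) =F_{R}\left( t,\overrightarrow{x},r\right) $, so that this factor equals $\sum_{n=0}^{\infty }H_{n}\left( \overrightarrow{x},r\right) t^{n}/n!$. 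This absorption of the linear term $xt$ into the first coordinate of the parameter vector is the conceptual key to the lemma.

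The second step is to expand the cosine by its Maclaurin series, $\cos \left( yt\right) =\sum_{m=0}^{\infty }\left( -1\right) ^{m}y^{2m}t^{2m}/\left( 2m\right) !$, and to form the Cauchy product of the two series. Collecting the coefficient of $t^{n}$ in the product
\[
\left( \sum_{k=0}^{\infty }H_{k}\left( \overrightarrow{x},r\right) \frac{t^{k}}{k!}\right) \left( \sum_{m=0}^{\infty }\left( -1\right) ^{m}\frac{y^{2m}}{\left( 2m\right) !}t^{2m}\right)
\]
forces $k+2m=n$, hence $k=n-2m$ with $m$ ranging from $0$ to $\left[ \frac{n}{2}\right] $. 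Multiplying through by $n!$ to pass from the coefficient of $t^{n}$ to that of $t^{n}/n!$, and using $n!/\left( \left( n-2m\right) !\left( 2m\right) !\right) =\binom{n}{2m}$, I would obtain precisely $k_{1}\left( n;x,y,\overrightarrow{u},r\right) =\sum_{m=0}^{\left[ \frac{n}{2}\right] }\left( -1\right) ^{m}\binom{n}{2m}y^{2m}H_{n-2m}\left( \overrightarrow{x},r\right) $, which is the assertion (\ref{g1c}) after relabeling $m$ as $j$.

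The argument is essentially routine once the generating-function product is set up, so there is no serious obstacle; the only points requiring care are the identification of the shifted vector $\overrightarrow{x}$ and the bookkeeping in the Cauchy product, in particular verifying that the upper summation limit is exactly $\left[ \frac{n}{2}\right] $ (forced by $2m\leq n$) and that the factorial factors combine into the binomial coefficient $\binom{n}{2j}$ of the stated formula. As a consistency check, one could set $r=1$, where $\overrightarrow{x}=\left( x+u_{1}\right) $ gives $H_{n}\left( \overrightarrow{x},1\right) =\left( x+u_{1}\right) ^{n}$ and $K_{1}$ reduces to $F_{C}\left( t,x+u_{1},y\right) $; the formula then collapses to the known expansion (\ref{7a}) of $C_{n}\left( x+u_{1},y\right) $, confirming that the identity specializes correctly.
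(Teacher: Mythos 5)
Your proposal is correct and follows essentially the same route as the paper's own proof: both absorb the linear term $xt$ into the first coordinate to rewrite $K_{1}\left( t,x,y,\overrightarrow{u},r\right) =\cos \left( yt\right) F_{R}\left( t,\overrightarrow{x},r\right) $, expand the cosine as a power series, and extract the coefficient of $t^{n}/n!$ from the Cauchy product. Your write-up is merely more explicit about the index bookkeeping, and the $r=1$ consistency check is a nice extra touch.
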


\begin{proof}
Combining (\ref{g1a}) with (\ref{H2}), we obtain the following functional
equation:%
\[
K_{1}\left( t,x,y,\overrightarrow{u},r\right) =\cos \left( yt\right)
F_{R}\left( t,\overrightarrow{x},r\right) 
\]%
where $\overrightarrow{x}=\left( x+u_{1},u_{2},u_{3},\ldots ,u_{r}\right) $
and $\overrightarrow{u}=\left( u_{1},u_{2},u_{3},\ldots ,u_{r}\right) $. By
using above functional equation, we get%
\[
\sum\limits_{n=0}^{\infty }k_{1}\left( n;x,y,\overrightarrow{u},r\right) 
\frac{t^{n}}{n!}=\sum\limits_{n=0}^{\infty }\left( -1\right) ^{n}y^{2n}\frac{%
t^{2n}}{\left( 2n\right) !}\sum\limits_{n=0}^{\infty }H_{n}\left( 
\overrightarrow{x},r\right) \frac{t^{n}}{n!}. 
\]%
Therefore%
\[
\sum\limits_{n=0}^{\infty }k_{1}\left( n;x,y,\overrightarrow{u},r\right) 
\frac{t^{n}}{n!}=\sum\limits_{n=0}^{\infty }\sum\limits_{j=0}^{\left[ \frac{n%
}{2}\right] }\left( -1\right) ^{j}\binom{n}{2j}y^{2j}H_{n-2j}\left( 
\overrightarrow{x},r\right) \frac{t^{n}}{n!}. 
\]%
Comparing the coefficients of $\frac{t^{n}}{n!}$ on both sides of the above
equation, we arrive at the desired result.
\end{proof}

By using (\ref{g1c}), we compute a few values of the polynomials $%
k_{1}\left( n;x,y,\overrightarrow{u},r\right) $ as follows:

For $r=2$, $\overrightarrow{u}=\left( u_{1},u_{2}\right) $ and $%
\overrightarrow{x}=\left( x+u_{1},u_{2}\right) $, we have%
\begin{eqnarray*}
k_{1}\left( 0;x,y,\overrightarrow{u},2\right) &=&1, \\
k_{1}\left( 1;x,y,\overrightarrow{u},2\right) &=&x+u_{1}, \\
k_{1}\left( 2;x,y,\overrightarrow{u},2\right) &=&\left( x+u_{1}\right)
^{2}+2u_{2}-y^{2}, \\
k_{1}\left( 3;x,y,\overrightarrow{u},2\right) &=&\left( x+u_{1}\right)
^{3}+6\left( x+u_{1}\right) u_{2}-3y^{2}\left( x+u_{1}\right) .
\end{eqnarray*}%
For $r=3$, $\overrightarrow{u}=\left( u_{1},u_{2},u_{3}\right) $ and $%
\overrightarrow{x}=\left( x+u_{1},u_{2},u_{3}\right) $, we have%
\begin{eqnarray*}
k_{1}\left( 0;x,y,\overrightarrow{u},3\right) &=&1, \\
k_{1}\left( 1;x,y,\overrightarrow{u},3\right) &=&x+u_{1}, \\
k_{1}\left( 2;x,y,\overrightarrow{u},3\right) &=&\left( x+u_{1}\right)
^{2}+2u_{2}-y^{2}, \\
k_{1}\left( 3;x,y,\overrightarrow{u},3\right) &=&\left( x+u_{1}\right)
^{3}+6\left( x+u_{1}\right) u_{2}+6u_{3}-3y^{2}\left( x+u_{1}\right) .
\end{eqnarray*}

\begin{lemma}
\label{LEMMA2}Let $\overrightarrow{x}=\left( x+u_{1},u_{2},u_{3},\ldots
,u_{r}\right) $ and $\overrightarrow{u}=\left( u_{1},u_{2},u_{3},\ldots
,u_{r}\right) $. Then we have%
\begin{equation}
k_{2}\left( n;x,y,\overrightarrow{u},r\right) =\sum\limits_{j=0}^{\left[ 
\frac{n-1}{2}\right] }\left( -1\right) ^{j}\binom{n}{2j+1}%
y^{2j+1}H_{n-1-2j}\left( \overrightarrow{x},r\right) .  \label{g1d}
\end{equation}
\end{lemma}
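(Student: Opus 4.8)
The plan is to mirror exactly the proof of Lemma~\ref{LEMMA1}, but using the sine function and its power-series expansion in place of the cosine. The statement for $k_2$ is the imaginary part of the decomposition, so I expect the argument to be structurally identical to the real-part case, with only the relevant series and binomial index shifted appropriately.

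First I would combine the defining generating function (\ref{g2a}) with the generating function (\ref{H2}) for the generalized Hermite-Kamp\`{e} de F\`{e}riet polynomials. Since $K_{2}\left( t,x,y,\overrightarrow{u},r\right)=\exp\left(xt+\sum_{j=1}^{r}u_{j}t^{j}\right)\sin(yt)$, and since $\exp\left(xt+\sum_{j=1}^{r}u_{j}t^{j}\right)=\exp\left(\sum_{j=1}^{r}x_{j}t^{j}\right)=F_{R}\left(t,\overrightarrow{x},r\right)$ with $\overrightarrow{x}=\left(x+u_{1},u_{2},\ldots,u_{r}\right)$ (because absorbing $xt$ into the first coordinate shifts $u_{1}$ to $x+u_{1}$), I obtain the functional equation
\[
K_{2}\left( t,x,y,\overrightarrow{u},r\right)=\sin(yt)\,F_{R}\left(t,\overrightarrow{x},r\right).
\]
This is the $\sin$-analogue of the functional equation used in Lemma~\ref{LEMMA1}.

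Next I would substitute the Taylor series $\sin(yt)=\sum_{j=0}^{\infty}(-1)^{j}\dfrac{y^{2j+1}t^{2j+1}}{(2j+1)!}$ together with $F_{R}\left(t,\overrightarrow{x},r\right)=\sum_{n=0}^{\infty}H_{n}\left(\overrightarrow{x},r\right)\dfrac{t^{n}}{n!}$ into the right-hand side, giving a product of two power series:
\[
\sum_{n=0}^{\infty}k_{2}\left(n;x,y,\overrightarrow{u},r\right)\frac{t^{n}}{n!}=\left(\sum_{j=0}^{\infty}(-1)^{j}\frac{y^{2j+1}t^{2j+1}}{(2j+1)!}\right)\left(\sum_{m=0}^{\infty}H_{m}\left(\overrightarrow{x},r\right)\frac{t^{m}}{m!}\right).
\]
Applying the Cauchy product and collecting the coefficient of $t^{n}$, the terms come from $(2j+1)+m=n$, i.e.\ $m=n-2j-1$, so $j$ runs over $0\le j\le\left[\frac{n-1}{2}\right]$. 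After multiplying through by $n!$ to match the exponential generating function normalization, the factor $\dfrac{n!}{(2j+1)!\,(n-2j-1)!}$ becomes $\binom{n}{2j+1}$, which yields precisely the summand in (\ref{g1d}).

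Finally I would equate coefficients of $\dfrac{t^{n}}{n!}$ on both sides to extract $k_{2}\left(n;x,y,\overrightarrow{u},r\right)=\sum_{j=0}^{\left[\frac{n-1}{2}\right]}(-1)^{j}\binom{n}{2j+1}y^{2j+1}H_{n-2j-1}\left(\overrightarrow{x},r\right)$, completing the proof. I anticipate no genuine obstacle here, since the whole argument is a routine Cauchy-product-and-compare-coefficients computation; the only point requiring a little care is bookkeeping the odd-index shift from the sine series (the $2j+1$ in the binomial and the $n-2j-1$ in the Hermite index), as opposed to the even-index shift in Lemma~\ref{LEMMA1}, and confirming that the upper summation limit is correctly $\left[\frac{n-1}{2}\right]$ rather than $\left[\frac{n}{2}\right]$.
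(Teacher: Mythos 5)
Your proposal is correct and follows essentially the same route as the paper: the functional equation $K_{2}\left( t,x,y,\overrightarrow{u},r\right) =\sin \left( yt\right) F_{R}\left( t,\overrightarrow{x},r\right)$, expansion of $\sin(yt)$ as a power series, a Cauchy product, and comparison of the coefficients of $\frac{t^{n}}{n!}$. The only cosmetic difference is that you spell out the index bookkeeping ($m=n-2j-1$, upper limit $\left[ \frac{n-1}{2}\right]$) that the paper leaves implicit.
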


\begin{proof}
Combining (\ref{g2a}) with (\ref{H2}), we get the following functional
equation:%
\[
K_{2}\left( t,x,y,\overrightarrow{u},r\right) =\sin \left( yt\right)
F_{R}\left( t,\overrightarrow{x},r\right) 
\]%
where $\overrightarrow{x}=\left( x+u_{1},u_{2},u_{3},\ldots ,u_{r}\right) $
and $\overrightarrow{u}=\left( u_{1},u_{2},u_{3},\ldots ,u_{r}\right) $. By
using above functional equation, we have%
\[
\sum\limits_{n=0}^{\infty }k_{2}\left( n;x,y,\overrightarrow{u},r\right) 
\frac{t^{n}}{n!}=\sum\limits_{n=0}^{\infty }\left( -1\right) ^{n}y^{2n+1}%
\frac{t^{2n+1}}{\left( 2n+1\right) !}\sum\limits_{n=0}^{\infty }H_{n}\left( 
\overrightarrow{x},r\right) \frac{t^{n}}{n!}. 
\]%
Therefore%
\[
\sum\limits_{n=0}^{\infty }k_{2}\left( n;x,y,\overrightarrow{u},r\right) 
\frac{t^{n}}{n!}=\sum\limits_{n=0}^{\infty }\sum\limits_{j=0}^{\left[ \frac{%
n-1}{2}\right] }\left( -1\right) ^{j}\binom{n}{2j+1}y^{2j+1}H_{n-1-2j}\left( 
\overrightarrow{x},r\right) \frac{t^{n}}{n!}. 
\]%
Comparing the coefficients of $\frac{t^{n}}{n!}$ on both sides of the above
equation, we arrive at the desired result.
\end{proof}

By using (\ref{g1d}), we compute a few values of the polynomials $%
k_{2}\left( n;x,y,\overrightarrow{u},r\right) $ as follows:

For $r=2$, $\overrightarrow{u}=\left( u_{1},u_{2}\right) $ and $%
\overrightarrow{x}=\left( x+u_{1},u_{2}\right) $, we have%
\begin{eqnarray*}
k_{2}\left( 0;x,y,\overrightarrow{u},2\right) &=&0, \\
k_{2}\left( 1;x,y,\overrightarrow{u},2\right) &=&y, \\
k_{2}\left( 2;x,y,\overrightarrow{u},2\right) &=&2y\left( x+u_{1}\right) , \\
k_{2}\left( 3;x,y,\overrightarrow{u},2\right) &=&3y\left( x+u_{1}\right)
^{2}+6yu_{2}-y^{3}.
\end{eqnarray*}%
For $r=3$, $\overrightarrow{u}=\left( u_{1},u_{2},u_{3}\right) $ and $%
\overrightarrow{x}=\left( x+u_{1},u_{2},u_{3}\right) $, we have%
\begin{eqnarray*}
k_{2}\left( 0;x,y,\overrightarrow{u},3\right) &=&0, \\
k_{2}\left( 1;x,y,\overrightarrow{u},3\right) &=&y, \\
k_{2}\left( 2;x,y,\overrightarrow{u},3\right) &=&2y\left( x+u_{1}\right) , \\
k_{2}\left( 3;x,y,\overrightarrow{u},3\right) &=&3y\left( x+u_{1}\right)
^{2}+6yu_{2}-y^{3}.
\end{eqnarray*}%
Combining Lemma \ref{LEMMA1} and Lemma \ref{LEMMA2} with (\ref{g1f}), we
obtain an explicit formula for the polynomials $\mathcal{K}\left( n;w,%
\overrightarrow{u},r\right) $ by the following theorem:

\begin{theorem}
Let $\overrightarrow{x}=\left( x+u_{1},u_{2},u_{3},\ldots ,u_{r}\right) $
and $\overrightarrow{u}=\left( u_{1},u_{2},u_{3},\ldots ,u_{r}\right) $.
Then we have%
\begin{equation}
\mathcal{K}\left( n;w,\overrightarrow{u},r\right) =\sum\limits_{j=0}^{\left[ 
\frac{n}{2}\right] }\left( -1\right) ^{j}\binom{n}{2j}y^{2j}H_{n-2j}\left( 
\overrightarrow{x},r\right) +i\sum\limits_{j=0}^{\left[ \frac{n-1}{2}\right]
}\left( -1\right) ^{j}\binom{n}{2j+1}y^{2j+1}H_{n-1-2j}\left( 
\overrightarrow{x},r\right) .  \label{d1fg}
\end{equation}
\end{theorem}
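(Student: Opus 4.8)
The plan is to read this theorem as the bookkeeping step that merges the two preceding lemmas, so almost all of the analytic work is already done. First I would recall the real/imaginary decomposition (\ref{g1f}), namely $\mathcal{K}\left( n;w,\overrightarrow{u},r\right) =k_{1}\left( n;x,y,\overrightarrow{u},r\right) +ik_{2}\left( n;x,y,\overrightarrow{u},r\right) $, which itself comes from splitting the generating function $\mathcal{G}$ in (\ref{ag1}) into its real part $K_{1}$ and imaginary part $K_{2}$ by Euler's formula and then comparing the coefficients of $t^{n}/n!$. This reduces the claim to substituting the closed forms already supplied for $k_{1}$ and $k_{2}$, with nothing further to prove about the generating functions themselves.

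Next I would invoke Lemma \ref{LEMMA1}, which gives $k_{1}\left( n;x,y,\overrightarrow{u},r\right) $ as the cosine-type sum $\sum_{j=0}^{\left[ n/2\right] }\left( -1\right) ^{j}\binom{n}{2j}y^{2j}H_{n-2j}\left( \overrightarrow{x},r\right) $ in (\ref{g1c}), together with Lemma \ref{LEMMA2}, which gives $k_{2}\left( n;x,y,\overrightarrow{u},r\right) $ as the sine-type sum $\sum_{j=0}^{\left[ \left( n-1\right) /2\right] }\left( -1\right) ^{j}\binom{n}{2j+1}y^{2j+1}H_{n-1-2j}\left( \overrightarrow{x},r\right) $ in (\ref{g1d}). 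Substituting these two expressions into (\ref{g1f}) produces exactly (\ref{d1fg}): the real sum is contributed by $k_{1}$ and the imaginary sum, carrying the factor $i$, is contributed by $k_{2}$.

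If one preferred a self-contained derivation that bypasses the lemma statements, I would instead expand (\ref{ag1}) directly, writing $\cos \left( yt\right) +i\sin \left( yt\right) =\exp \left( iyt\right) =\sum_{m=0}^{\infty }\left( iy\right) ^{m}t^{m}/m!$ and multiplying by the series $F_{R}\left( t,\overrightarrow{x},r\right) $ for the generalized Hermite polynomials $H_{n}\left( \overrightarrow{x},r\right) $ from (\ref{H2}); separating even and odd values of $m$ then recovers the two index ranges $\left[ n/2\right] $ and $\left[ \left( n-1\right) /2\right] $. Since $i^{2j}=\left( -1\right) ^{j}$ and $i^{2j+1}=i\left( -1\right) ^{j}$, the even powers feed the real part and the odd powers the imaginary part, reproducing the same formula. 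Either way there is no genuine obstacle here: the only point requiring care is the Cauchy-product reindexing that turns a product of two exponential-type series into a single sum over $t^{n}/n!$, and that manipulation has already been carried out inside the proofs of Lemmas \ref{LEMMA1} and \ref{LEMMA2}. The theorem is therefore immediate once the decomposition (\ref{g1f}) is in hand.
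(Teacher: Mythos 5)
Your proposal is correct and follows exactly the paper's own route: the paper derives the theorem precisely by substituting the closed forms (\ref{g1c}) and (\ref{g1d}) from Lemma \ref{LEMMA1} and Lemma \ref{LEMMA2} into the decomposition (\ref{g1f}), which is all the proof requires. Your alternative direct expansion via $\exp(iyt)$ is a valid variant, but the primary argument you give is the same as the paper's.
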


By using (\ref{d1fg}), we compute a few values of the polynomials $\mathcal{K%
}\left( n;w,\overrightarrow{u},r\right) $ as follows:

For $r=2$, $\overrightarrow{u}=\left( u_{1},u_{2}\right) $ and $%
\overrightarrow{x}=\left( x+u_{1},u_{2}\right) $, we have%
\begin{eqnarray*}
\mathcal{K}\left( 0;w,\overrightarrow{u},2\right) &=&1, \\
\mathcal{K}\left( 1;w,\overrightarrow{u},2\right) &=&x+u_{1}+iy, \\
\mathcal{K}\left( 2;w,\overrightarrow{u},2\right) &=&\left( x+u_{1}\right)
^{2}+2u_{2}-y^{2}+2iy\left( x+u_{1}\right) , \\
\mathcal{K}\left( 3;w,\overrightarrow{u},2\right) &=&\left( x+u_{1}\right)
^{3}+6\left( x+u_{1}\right) u_{2}-3y^{2}\left( x+u_{1}\right) +i\left(
3y\left( x+u_{1}\right) ^{2}+6yu_{2}-y^{3}\right) .
\end{eqnarray*}%
For $r=3$, $\overrightarrow{u}=\left( u_{1},u_{2},u_{3}\right) $ and $%
\overrightarrow{x}=\left( x+u_{1},u_{2},u_{3}\right) $, we have%
\begin{eqnarray*}
\mathcal{K}\left( 0;w,\overrightarrow{u},3\right) &=&1, \\
\mathcal{K}\left( 1;w,\overrightarrow{u},3\right) &=&x+u_{1}+iy, \\
\mathcal{K}\left( 2;w,\overrightarrow{u},3\right) &=&\left( x+u_{1}\right)
^{2}+2u_{2}-y^{2}+2iy\left( x+u_{1}\right) , \\
\mathcal{K}\left( 3;w,\overrightarrow{u},3\right) &=&\left( x+u_{1}\right)
^{3}+6\left( x+u_{1}\right) u_{2}+6u_{3}-3y^{2}\left( x+u_{1}\right)
+i\left( 3y\left( x+u_{1}\right) ^{2}+6yu_{2}-y^{3}\right) .
\end{eqnarray*}

\begin{theorem}
Let $\overrightarrow{u}=\left( u_{1},u_{2},u_{3},\ldots ,u_{r}\right) $.
Then we have%
\begin{equation}
k_{1}\left( n;x,y,\overrightarrow{u},r\right) =\sum\limits_{j=0}^{n}\binom{n%
}{j}C_{j}\left( x,y\right) H_{n-j}\left( \overrightarrow{u},r\right) .
\label{K1HC}
\end{equation}
\end{theorem}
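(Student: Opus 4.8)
The plan is to recognize that the generating function $K_{1}$ in (\ref{g1a}) factors as a product of two generating functions that are already available: the function $F_{C}$ from (\ref{6a}) and the function $F_{R}$ from (\ref{H2}). Indeed, since $\exp\!\left(xt+\sum_{j=1}^{r}u_{j}t^{j}\right)=\exp(xt)\exp\!\left(\sum_{j=1}^{r}u_{j}t^{j}\right)$, I would first write down the functional equation
\[
K_{1}\left( t,x,y,\overrightarrow{u},r\right)=\left(\exp(xt)\cos(yt)\right)\exp\!\left(\sum_{j=1}^{r}u_{j}t^{j}\right)=F_{C}\left(t,x,y\right)\,F_{R}\left(t,\overrightarrow{u},r\right).
\]
This is the conceptual heart of the argument: observe that the $\cos(yt)$ factor combines with $\exp(xt)$ to form exactly the Hermite-type cosine generating function $F_{C}$, while the remaining exponential in the powers $t^{j}$ is precisely $F_{R}$ generating the polynomials $H_{n}(\overrightarrow{u},r)$.

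Next I would substitute the series expansions (\ref{6a}) and (\ref{H2}) into the right-hand side, giving
\[
\sum\limits_{n=0}^{\infty }k_{1}\left( n;x,y,\overrightarrow{u},r\right)\frac{t^{n}}{n!}=\left(\sum\limits_{n=0}^{\infty }C_{n}(x,y)\frac{t^{n}}{n!}\right)\left(\sum\limits_{n=0}^{\infty }H_{n}\left(\overrightarrow{u},r\right)\frac{t^{n}}{n!}\right).
\]
Then I would apply the Cauchy product rule for exponential generating functions, namely that the product of $\sum a_{n}t^{n}/n!$ and $\sum b_{n}t^{n}/n!$ has $\sum_{j=0}^{n}\binom{n}{j}a_{j}b_{n-j}$ as the coefficient of $t^{n}/n!$. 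Carrying this out with $a_{j}=C_{j}(x,y)$ and $b_{n-j}=H_{n-j}(\overrightarrow{u},r)$ yields
\[
\sum\limits_{n=0}^{\infty }k_{1}\left( n;x,y,\overrightarrow{u},r\right)\frac{t^{n}}{n!}=\sum\limits_{n=0}^{\infty }\left(\sum\limits_{j=0}^{n}\binom{n}{j}C_{j}(x,y)H_{n-j}\left(\overrightarrow{u},r\right)\right)\frac{t^{n}}{n!}.
\]

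Finally, I would compare the coefficients of $t^{n}/n!$ on both sides to obtain (\ref{K1HC}). This mirrors exactly the method used in the proofs of Lemma \ref{LEMMA1} and Lemma \ref{LEMMA2}, differing only in which decomposition of the factor $\exp(xt)\cos(yt)$ one exploits. I do not anticipate any genuine obstacle here; the one point requiring care is bookkeeping in the Cauchy product, specifically ensuring that the binomial weight $\binom{n}{j}$ appears (as it must for exponential generating functions, in contrast to an ordinary-generating-function convolution), since both series are normalized by $n!$. Everything else is a routine coefficient comparison.
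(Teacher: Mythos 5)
Your proposal is correct and follows exactly the paper's own argument: the same functional equation $K_{1}\left( t,x,y,\overrightarrow{u},r\right) =F_{C}\left( t,x,y\right) F_{R}\left( t,\overrightarrow{u},r\right)$, the same Cauchy-product expansion with the binomial weight $\binom{n}{j}$, and the same comparison of coefficients of $\frac{t^{n}}{n!}$. There is nothing to add or correct.
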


\begin{proof}
By using (\ref{6a}), (\ref{H2}) and (\ref{g1a}), we obtain the following
functional equation:%
\[
K_{1}\left( t,x,y,\overrightarrow{u},r\right) =F_{C}\left( t,x,y\right)
F_{R}\left( t,\overrightarrow{u},r\right) . 
\]%
By using the above functional equation, we get%
\[
\sum\limits_{n=0}^{\infty }k_{1}\left( n;x,y,\overrightarrow{u},r\right) 
\frac{t^{n}}{n!}=\sum\limits_{n=0}^{\infty }C_{n}\left( x,y\right) \frac{%
t^{n}}{n!}\sum\limits_{n=0}^{\infty }H_{n}\left( \overrightarrow{u},r\right) 
\frac{t^{n}}{n!}. 
\]%
Therefore%
\[
\sum\limits_{n=0}^{\infty }k_{1}\left( n;x,y,\overrightarrow{u},r\right) 
\frac{t^{n}}{n!}=\sum\limits_{n=0}^{\infty }\sum\limits_{j=0}^{n}\binom{n}{j}%
C_{j}\left( x,y\right) H_{n-j}\left( \overrightarrow{u},r\right) \frac{t^{n}%
}{n!}. 
\]%
Comparing the coefficients of $\frac{t^{n}}{n!}$ on both sides of the above
equation, we arrive at the desired result.
\end{proof}

\begin{theorem}
Let $\overrightarrow{u}=\left( u_{1},u_{2},u_{3},\ldots ,u_{r}\right) $.
Then we have%
\begin{equation}
k_{2}\left( n;x,y,\overrightarrow{u},r\right) =\sum\limits_{j=0}^{n}\binom{n%
}{j}S_{j}\left( x,y\right) H_{n-j}\left( \overrightarrow{u},r\right) .
\label{K2HS}
\end{equation}
\end{theorem}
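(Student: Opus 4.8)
The plan is to mimic exactly the strategy used in the proof of the preceding theorem (\ref{K1HC}), replacing the cosine factor by the sine factor. The starting point is the definition (\ref{g2a}) of the generating function, which reads
\[
K_{2}\left( t,x,y,\overrightarrow{u},r\right) =\exp \left( xt+\sum\limits_{j=1}^{r}u_{j}t^{j}\right) \sin \left( yt\right).
\]
First I would observe that this expression factors cleanly. Since $\exp \left( xt+\sum_{j=1}^{r}u_{j}t^{j}\right) =\exp \left( xt\right) \exp \left( \sum_{j=1}^{r}u_{j}t^{j}\right)$, and recalling the defining generating functions (\ref{6b}) for $S_{n}(x,y)$ and (\ref{H2}) for $H_{n}(\overrightarrow{u},r)$, I can rewrite $K_{2}$ as the product of the two known generating functions:
\[
K_{2}\left( t,x,y,\overrightarrow{u},r\right) =F_{S}\left( t,x,y\right) F_{R}\left( t,\overrightarrow{u},r\right).
\]
This factorization is the key functional equation; it is the direct analogue of the factorization $K_{1}=F_{C}F_{R}$ exploited in the previous theorem.

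Next I would substitute the series representations of both factors and form their Cauchy product. Writing $F_{S}\left( t,x,y\right) =\sum_{n=0}^{\infty }S_{n}\left( x,y\right) \tfrac{t^{n}}{n!}$ and $F_{R}\left( t,\overrightarrow{u},r\right) =\sum_{n=0}^{\infty }H_{n}\left( \overrightarrow{u},r\right) \tfrac{t^{n}}{n!}$, the product of these two exponential-type generating functions is
\[
\sum\limits_{n=0}^{\infty }\left( \sum\limits_{j=0}^{n}\binom{n}{j}S_{j}\left( x,y\right) H_{n-j}\left( \overrightarrow{u},r\right) \right) \frac{t^{n}}{n!},
\]
where the binomial coefficient arises in the standard way from combining $\tfrac{t^{j}}{j!}$ and $\tfrac{t^{n-j}}{(n-j)!}$ into $\tfrac{t^{n}}{n!}$. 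Finally, comparing the coefficients of $\tfrac{t^{n}}{n!}$ on both sides of $K_{2}=F_{S}F_{R}$ yields the claimed formula (\ref{K2HS}).

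I do not anticipate any genuine obstacle here: the argument is routine once the factorization is identified, and it runs in perfect parallel to the proof of (\ref{K1HC}). The only point demanding any care is bookkeeping in the Cauchy product — ensuring the factorials are correctly absorbed into the binomial coefficient — but since both $F_{S}$ and $F_{R}$ are of exponential type with the same $\tfrac{t^{n}}{n!}$ normalization, this step is entirely mechanical. In effect, the whole content of the theorem is the recognition that the odd (imaginary) part $K_{2}$ of the master generating function $\mathcal{G}$ factors through the sine generating function $F_{S}$, just as the even (real) part $K_{1}$ factors through $F_{C}$.
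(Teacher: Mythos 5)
Your proposal is correct and follows essentially the same route as the paper: the paper likewise derives the functional equation $K_{2}\left( t,x,y,\overrightarrow{u},r\right) =F_{S}\left( t,x,y\right) F_{R}\left( t,\overrightarrow{u},r\right)$ from (\ref{6b}), (\ref{H2}) and (\ref{g2a}), expands both factors as exponential generating series, forms the Cauchy product, and compares coefficients of $\frac{t^{n}}{n!}$. No gaps; your added remark about the factorization $\exp \left( xt+\sum_{j=1}^{r}u_{j}t^{j}\right) =\exp \left( xt\right) \exp \left( \sum_{j=1}^{r}u_{j}t^{j}\right)$ simply makes explicit a step the paper leaves implicit.
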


\begin{proof}
By using (\ref{6b}), (\ref{H2}) and (\ref{g2a}), we derive the following
functional equation:%
\[
K_{2}\left( t,x,y,\overrightarrow{u},r\right) =F_{S}\left( t,x,y\right)
F_{R}\left( t,\overrightarrow{u},r\right) . 
\]%
By using above functional equation, we get%
\[
\sum\limits_{n=0}^{\infty }k_{2}\left( n;x,y,\overrightarrow{u},r\right) 
\frac{t^{n}}{n!}=\sum\limits_{n=0}^{\infty }S_{n}\left( x,y\right) \frac{%
t^{n}}{n!}\sum\limits_{n=0}^{\infty }H_{n}\left( \overrightarrow{u},r\right) 
\frac{t^{n}}{n!}. 
\]%
Therefore%
\[
\sum\limits_{n=0}^{\infty }k_{2}\left( n;x,y,\overrightarrow{u},r\right) 
\frac{t^{n}}{n!}=\sum\limits_{n=0}^{\infty }\sum\limits_{j=0}^{n}\binom{n}{j}%
S_{j}\left( x,y\right) H_{n-j}\left( \overrightarrow{u},r\right) \frac{t^{n}%
}{n!}. 
\]%
Comparing the coefficients of $\frac{t^{n}}{n!}$ on both sides of the above
equation, we arrive at the desired result.
\end{proof}

Combining (\ref{K1HC}) and (\ref{K2HS}) with (\ref{g1f}), we obtain an
explicit formula for the polynomials $\mathcal{K}\left( n;w,\overrightarrow{u%
},r\right) $ by the following corollary:

\begin{corollary}
Let $\overrightarrow{u}=\left( u_{1},u_{2},u_{3},\ldots ,u_{r}\right) $.
Then we have%
\[
\mathcal{K}\left( n;w,\overrightarrow{u},r\right) =\sum\limits_{j=0}^{n}%
\binom{n}{j}H_{n-j}\left( \overrightarrow{u},r\right) \left( C_{j}\left(
x,y\right) +iS_{j}\left( x,y\right) \right) . 
\]
\end{corollary}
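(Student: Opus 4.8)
The plan is to read the statement straight off the two preceding theorems together with the decomposition (\ref{g1f}), so the work here is bookkeeping rather than new analysis. First I would invoke (\ref{g1f}), namely $\mathcal{K}\left( n;w,\overrightarrow{u},r\right) =k_{1}\left( n;x,y,\overrightarrow{u},r\right) +ik_{2}\left( n;x,y,\overrightarrow{u},r\right) $, and then substitute into it the closed forms (\ref{K1HC}) for $k_{1}$ and (\ref{K2HS}) for $k_{2}$.

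The observation that makes the combination immediate is that the two sums (\ref{K1HC}) and (\ref{K2HS}) have identical outer structure: both run over the same index set $j=0,1,\ldots ,n$, carry the same binomial weight $\binom{n}{j}$, and contain the same Hermite factor $H_{n-j}\left( \overrightarrow{u},r\right) $; only the trigonometric factor differs, being $C_{j}\left( x,y\right) $ in the real part and $S_{j}\left( x,y\right) $ in the imaginary part. Hence, after multiplying the expression for $k_{2}$ by $i$, I can factor out the common $\binom{n}{j}H_{n-j}\left( \overrightarrow{u},r\right) $ and merge the two sums term by term, which yields exactly $\sum_{j=0}^{n}\binom{n}{j}H_{n-j}\left( \overrightarrow{u},r\right) \left( C_{j}\left( x,y\right) +iS_{j}\left( x,y\right) \right) $, the asserted formula.

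There is no genuine obstacle here; the only point worth recording is that because both source sums already range over the full set $j=0,\ldots ,n$, the term-by-term merger is legitimate with no shift of limits (at $j=0$ the imaginary contribution is simply $S_{0}\left( x,y\right) =0$ by (\ref{7b}), so nothing is created or lost). If one prefers a self-contained derivation that bypasses the two theorems, I would instead note that Euler's formula turns (\ref{g1}) into $\mathcal{G}\left( t,w,\overrightarrow{u},r\right) =F_{R}\left( t,\overrightarrow{u},r\right) \left( F_{C}\left( t,x,y\right) +iF_{S}\left( t,x,y\right) \right) $ via (\ref{6a}) and (\ref{6b}); a single Cauchy product of the series for $F_{R}$ against the series for $F_{C}+iF_{S}$, followed by comparison of the coefficients of $\frac{t^{n}}{n!}$, produces the same identity directly.
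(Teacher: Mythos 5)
Your proof matches the paper's own derivation: the corollary is obtained there precisely by combining the decomposition (\ref{g1f}) with the two theorems (\ref{K1HC}) and (\ref{K2HS}) and factoring out the common weight $\binom{n}{j}H_{n-j}\left( \overrightarrow{u},r\right)$, exactly as you do. Your bookkeeping (including the harmless $j=0$ remark) is correct, so nothing is missing.
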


\section{Generating functions for Hermite-based $r$-parametric
Milne-Thomson-type polynomials}

By the aid of generating functions in (\ref{y6}) and (\ref{g1}), we
construct the following generating functions for Hermite-based $r$%
-parametric Milne-Thomson-type polynomials:%
\begin{eqnarray}
M_{1}\left( t,w,z,\overrightarrow{u},r,a,b\right) &=&\left( b+f\left(
t,a\right) \right) ^{z}\mathcal{G}\left( t,w,\overrightarrow{u},r\right)
\label{M1} \\
&=&\sum\limits_{n=0}^{\infty }h\left( n,w,z;\overrightarrow{u},r,a,b\right) 
\frac{t^{n}}{n!},  \nonumber
\end{eqnarray}%
\begin{eqnarray}
M_{2}\left( t,w,z,\overrightarrow{u},r,a,b\right) &=&\left( b+f\left(
t,a\right) \right) ^{z}\left( \mathcal{G}\left( t,w,\overrightarrow{u}%
,r\right) +\mathcal{G}\left( t,\overline{w},\overrightarrow{u},r\right)
\right)  \label{M2} \\
&=&\sum\limits_{n=0}^{\infty }h_{1}\left( n,w,z;\overrightarrow{u}%
,r,a,b\right) \frac{t^{n}}{n!},  \nonumber
\end{eqnarray}%
and%
\begin{eqnarray}
M_{3}\left( t,w,z,\overrightarrow{u},r,a,b\right) &=&\left( b+f\left(
t,a\right) \right) ^{z}\left( \mathcal{G}\left( t,w,\overrightarrow{u}%
,r\right) -\mathcal{G}\left( t,\overline{w},\overrightarrow{u},r\right)
\right)  \label{M3} \\
&=&\sum\limits_{n=0}^{\infty }h_{2}\left( n,w,z;\overrightarrow{u}%
,r,a,b\right) \frac{t^{n}}{n!}.  \nonumber
\end{eqnarray}%
where $a,b,z\in \mathbb{R}$, $r$-tuples $\overrightarrow{u}=\left(
u_{1},u_{2},\ldots ,u_{r}\right) $, $w=x+iy$ and $\overline{w}=x-iy$; the
function $f\left( t,a\right) $ denotes analytic or meromorphic function.

Substituting $w=0$ and $\overrightarrow{u}=\overrightarrow{0}$ into (\ref{M1}%
), we have%
\[
\sum\limits_{n=0}^{\infty }h\left( n,0,z;\overrightarrow{0},r,a,b\right) 
\frac{t^{n}}{n!}=\sum\limits_{n=0}^{\infty }y_{6}^{(z)}\left( n;a,b\right) 
\frac{t^{n}}{n!}.
\]%
Thus, we get%
\[
h\left( n,0,z;\overrightarrow{0},r,a,b\right) =y_{6}^{(z)}\left(
n;a,b\right) .
\]

Combining (\ref{M3}) and (\ref{M2}) with (\ref{M1}), we obtain an formula
for the polynomials $h\left( n,w,z;\overrightarrow{u},r,a,b\right) $ by the
following theorem:

\begin{theorem}
Let $\overrightarrow{u}=\left( u_{1},u_{2},u_{3},\ldots ,u_{r}\right) $ and $%
w=x+iy$. Then we have%
\begin{equation}
h\left( n,w,z;\overrightarrow{u},r,a,b\right) =\frac{h_{1}\left( n,w,z;%
\overrightarrow{u},r,a,b\right) +h_{2}\left( n,w,z;\overrightarrow{u}%
,r,a,b\right) }{2}.  \label{1yH}
\end{equation}
\end{theorem}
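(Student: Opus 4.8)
The plan is to work entirely at the level of generating functions and then read off the coefficient identity by the same comparison technique used in the preceding lemmas. First I would form the sum $M_{2}+M_{3}$ directly from the defining expressions (\ref{M2}) and (\ref{M3}). Since both share the common prefactor $\left( b+f\left( t,a\right) \right) ^{z}$, adding them produces
\[
M_{2}\left( t,w,z,\overrightarrow{u},r,a,b\right) +M_{3}\left( t,w,z,\overrightarrow{u},r,a,b\right) =\left( b+f\left( t,a\right) \right) ^{z}\cdot 2\mathcal{G}\left( t,w,\overrightarrow{u},r\right),
\]
where the two occurrences of $\mathcal{G}\left( t,\overline{w},\overrightarrow{u},r\right) $ cancel, appearing with a plus sign in $M_{2}$ and with a minus sign in $M_{3}$. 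This cancellation of the $\overline{w}$-terms is the only substantive point in the argument, and I expect it to be the crux of the proof.

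Next I would identify the right-hand side with a known generating function. By the definition (\ref{M1}), the product $\left( b+f\left( t,a\right) \right) ^{z}\mathcal{G}\left( t,w,\overrightarrow{u},r\right) $ equals $M_{1}\left( t,w,z,\overrightarrow{u},r,a,b\right) $, so the functional equation reduces to
\[
M_{2}\left( t,w,z,\overrightarrow{u},r,a,b\right) +M_{3}\left( t,w,z,\overrightarrow{u},r,a,b\right) =2M_{1}\left( t,w,z,\overrightarrow{u},r,a,b\right).
\]

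Finally I would substitute the power-series expansions of $M_{1}$, $M_{2}$ and $M_{3}$ from (\ref{M1}), (\ref{M2}) and (\ref{M3}) into this identity, obtaining
\[
\sum\limits_{n=0}^{\infty }\left( h_{1}\left( n,w,z;\overrightarrow{u},r,a,b\right) +h_{2}\left( n,w,z;\overrightarrow{u},r,a,b\right) \right) \frac{t^{n}}{n!}=\sum\limits_{n=0}^{\infty }2h\left( n,w,z;\overrightarrow{u},r,a,b\right) \frac{t^{n}}{n!},
\]
and then compare the coefficients of $\frac{t^{n}}{n!}$ on both sides. This yields $h_{1}+h_{2}=2h$, and dividing by $2$ gives exactly the asserted formula (\ref{1yH}). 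There is no genuine obstacle in this proof: once the $\overline{w}$-terms are seen to cancel, the remainder is the routine coefficient-comparison step already employed throughout Section~2.
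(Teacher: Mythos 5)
Your proof is correct and follows exactly the route the paper intends: the paper offers no written proof beyond the phrase ``Combining (\ref{M3}) and (\ref{M2}) with (\ref{M1}), we obtain\dots'', and the content of that combination is precisely your observation that the $\mathcal{G}\left( t,\overline{w},\overrightarrow{u},r\right)$ terms cancel in $M_{2}+M_{3}$, leaving $2M_{1}$, after which coefficient comparison gives (\ref{1yH}). Nothing to add.
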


\begin{theorem}
Let $\overrightarrow{u}=\left( u_{1},u_{2},u_{3},\ldots ,u_{r}\right) $.
Then we have%
\begin{equation}
h_{1}\left( n,w,z;\overrightarrow{u},r,a,b\right) =\sum\limits_{j=0}^{n}%
\binom{n}{j}y_{6}^{(z)}\left( n-j;a,b\right) \left( \mathcal{K}\left( j;w,%
\overrightarrow{u},r\right) +\mathcal{K}\left( j;\overline{w},%
\overrightarrow{u},r\right) \right) .  \label{NK1}
\end{equation}
\end{theorem}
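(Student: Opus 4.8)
The plan is to read off (\ref{NK1}) directly from the defining generating function (\ref{M2}) by expanding each of its two factors as a formal power series in $t$ and forming the Cauchy product of exponential generating functions, exactly in the style already used for (\ref{K1HC}) and (\ref{K2HS}).

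First I would record the two ingredients. By (\ref{y66a}) the factor $\left(b+f\left(t,a\right)\right)^{z}$ expands as $\sum_{n=0}^{\infty}y_{6}^{(z)}\left(n;a,b\right)\frac{t^{n}}{n!}$, while (\ref{g1}) gives $\mathcal{G}\left(t,w,\overrightarrow{u},r\right)=\sum_{n=0}^{\infty}\mathcal{K}\left(n;w,\overrightarrow{u},r\right)\frac{t^{n}}{n!}$ and, with $w$ replaced by $\overline{w}$, $\mathcal{G}\left(t,\overline{w},\overrightarrow{u},r\right)=\sum_{n=0}^{\infty}\mathcal{K}\left(n;\overline{w},\overrightarrow{u},r\right)\frac{t^{n}}{n!}$. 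Adding the last two series term by term yields
\[
\mathcal{G}\left(t,w,\overrightarrow{u},r\right)+\mathcal{G}\left(t,\overline{w},\overrightarrow{u},r\right)=\sum_{n=0}^{\infty}\left(\mathcal{K}\left(n;w,\overrightarrow{u},r\right)+\mathcal{K}\left(n;\overline{w},\overrightarrow{u},r\right)\right)\frac{t^{n}}{n!}.
\]

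Next I would substitute these expansions into (\ref{M2}) and multiply. Applying the standard product rule for exponential generating functions, $\left(\sum_{n\ge 0}\alpha_{n}\frac{t^{n}}{n!}\right)\left(\sum_{n\ge 0}\beta_{n}\frac{t^{n}}{n!}\right)=\sum_{n\ge 0}\left(\sum_{j=0}^{n}\binom{n}{j}\alpha_{n-j}\beta_{j}\right)\frac{t^{n}}{n!}$, with $\alpha_{n}=y_{6}^{(z)}\left(n;a,b\right)$ and $\beta_{j}=\mathcal{K}\left(j;w,\overrightarrow{u},r\right)+\mathcal{K}\left(j;\overline{w},\overrightarrow{u},r\right)$, turns the right-hand side of (\ref{M2}) into a single series in $\frac{t^{n}}{n!}$ whose coefficient is precisely the right-hand side of (\ref{NK1}). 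Comparing this with the defining expansion $M_{2}=\sum_{n\ge 0}h_{1}\left(n,w,z;\overrightarrow{u},r,a,b\right)\frac{t^{n}}{n!}$ then forces the stated equality of the coefficients of $\frac{t^{n}}{n!}$.

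There is no serious obstacle; the argument is a routine convolution of power series, valid as an identity of formal power series (and convergent on the common domain of the two factors). The only point requiring care is the bookkeeping: one must combine the two copies of $\mathcal{G}$ into the single coefficient $\mathcal{K}\left(j;w,\overrightarrow{u},r\right)+\mathcal{K}\left(j;\overline{w},\overrightarrow{u},r\right)$ \emph{before} convolving with $y_{6}^{(z)}$, so that a single sum over $j$ appears rather than two separate convolutions, and one must justify the interchange of the summations over $n$ and $j$ that underlies the product formula.
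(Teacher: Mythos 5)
Your proposal is correct and follows essentially the same route as the paper: both start from the generating-function identity $M_{2}\left( t,w,z,\overrightarrow{u},r,a,b\right) =R_{1}\left( t,z;a,b\right) \left( \mathcal{G}\left( t,w,\overrightarrow{u},r\right) +\mathcal{G}\left( t,\overline{w},\overrightarrow{u},r\right) \right)$, expand each factor via (\ref{y66a}) and (\ref{g1}), form the Cauchy product of exponential generating functions, and compare coefficients of $\frac{t^{n}}{n!}$. The only cosmetic difference is that the paper distributes $R_{1}$ over the two copies of $\mathcal{G}$ and convolves each separately before recombining, whereas you add the two $\mathcal{K}$-series first and convolve once; the computations are identical.
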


\begin{proof}
By using (\ref{y66a}), (\ref{g1}) and (\ref{M2}), we derive the following
functional equation:%
\[
M_{2}\left( t,w,z,\overrightarrow{u},r,a,b\right) =R_{1}\left(
t,z;a,b\right) \left( \mathcal{G}\left( t,w,\overrightarrow{u},r\right) +%
\mathcal{G}\left( t,\overline{w},\overrightarrow{u},r\right) \right) .
\]%
From the above equation, we have%
\begin{eqnarray*}
\sum\limits_{n=0}^{\infty }h_{1}\left( n,w,z;\overrightarrow{u},r,a,b\right) 
\frac{t^{n}}{n!} &=&\sum\limits_{n=0}^{\infty }y_{6}^{(z)}\left(
n;a,b\right) \frac{t^{n}}{n!}\sum\limits_{n=0}^{\infty }\mathcal{K}\left(
n;w,\overrightarrow{u},r\right) \frac{t^{n}}{n!} \\
&&+\sum\limits_{n=0}^{\infty }y_{6}^{(z)}\left( n;a,b\right) \frac{t^{n}}{n!}%
\sum\limits_{n=0}^{\infty }\mathcal{K}\left( n;\overline{w},\overrightarrow{u%
},r\right) \frac{t^{n}}{n!}.
\end{eqnarray*}%
Therefore%
\begin{eqnarray*}
&&\sum\limits_{n=0}^{\infty }h_{1}\left( n,w,z;\overrightarrow{u}%
,r,a,b\right) \frac{t^{n}}{n!} \\
&=&\sum\limits_{n=0}^{\infty }\sum\limits_{j=0}^{n}\binom{n}{j}%
y_{6}^{(z)}\left( n-j;a,b\right) \left( \mathcal{K}\left( j;w,%
\overrightarrow{u},r\right) +\mathcal{K}\left( j;\overline{w},%
\overrightarrow{u},r\right) \right) \frac{t^{n}}{n!}.
\end{eqnarray*}%
Comparing the coefficients of $\frac{t^{n}}{n!}$ on both sides of the above
equation we arrive at the desired result.
\end{proof}

\begin{theorem}
Let $\overrightarrow{u}=\left( u_{1},u_{2},u_{3},\ldots ,u_{r}\right) $ and $%
w=x+iy$. Then we have%
\begin{equation}
h_{2}\left( n,w,z;\overrightarrow{u},r,a,b\right) =\sum\limits_{j=0}^{n}%
\binom{n}{j}y_{6}^{(z)}\left( n-j;a,b\right) \left( \mathcal{K}\left( j;w,%
\overrightarrow{u},r\right) -\mathcal{K}\left( j;\overline{w},%
\overrightarrow{u},r\right) \right) .  \label{NK2}
\end{equation}
\end{theorem}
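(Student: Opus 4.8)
The plan is to mirror exactly the proof of the preceding theorem for $h_1$, since the defining generating function $M_3$ in (\ref{M3}) has the same product structure as $M_2$ in (\ref{M2}), differing only in the sign between the two copies of $\mathcal{G}$. First I would rewrite the right-hand side of (\ref{M3}) as a functional equation by invoking (\ref{y66a}), which identifies the factor $\left( b+f\left( t,a\right) \right) ^{z}$ with the generating function $R_{1}\left( t,z;a,b\right)$. This yields
\[
M_{3}\left( t,w,z,\overrightarrow{u},r,a,b\right) =R_{1}\left( t,z;a,b\right) \left( \mathcal{G}\left( t,w,\overrightarrow{u},r\right) -\mathcal{G}\left( t,\overline{w},\overrightarrow{u},r\right) \right) .
\]

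Next I would substitute the power series expansions into each factor: the series for $R_{1}$ in terms of $y_{6}^{(z)}\left( n;a,b\right)$ from (\ref{y66a}), and the series for $\mathcal{G}\left( t,w,\overrightarrow{u},r\right)$ and $\mathcal{G}\left( t,\overline{w},\overrightarrow{u},r\right)$ in terms of the polynomials $\mathcal{K}\left( n;w,\overrightarrow{u},r\right)$ and $\mathcal{K}\left( n;\overline{w},\overrightarrow{u},r\right)$ from (\ref{g1}). This gives the product of series
\[
\sum\limits_{n=0}^{\infty }h_{2}\left( n,w,z;\overrightarrow{u},r,a,b\right) \frac{t^{n}}{n!}=\sum\limits_{n=0}^{\infty }y_{6}^{(z)}\left( n;a,b\right) \frac{t^{n}}{n!}\sum\limits_{n=0}^{\infty }\left( \mathcal{K}\left( n;w,\overrightarrow{u},r\right) -\mathcal{K}\left( n;\overline{w},\overrightarrow{u},r\right) \right) \frac{t^{n}}{n!}.
\]

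The core step is then the Cauchy product: multiplying the two exponential-type series produces the binomial convolution $\sum_{j=0}^{n}\binom{n}{j}y_{6}^{(z)}\left( n-j;a,b\right) \left( \mathcal{K}\left( j;w,\overrightarrow{u},r\right) -\mathcal{K}\left( j;\overline{w},\overrightarrow{u},r\right) \right)$ as the coefficient of $t^{n}/n!$. Comparing coefficients of $t^{n}/n!$ on both sides then yields the claimed formula (\ref{NK2}). I do not expect any genuine obstacle here: the argument is purely formal manipulation of generating functions, identical in structure to the $h_1$ case, and the only bookkeeping subtlety is tracking the minus sign faithfully through the convolution so that it attaches to the $\mathcal{K}\left( j;\overline{w},\overrightarrow{u},r\right)$ term rather than the coefficient $y_{6}^{(z)}\left( n-j;a,b\right)$. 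Convergence is not an issue since everything is carried out at the level of formal power series, exactly as in the earlier proofs in this section.
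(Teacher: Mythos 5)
Your proposal is correct and follows essentially the same route as the paper's own proof: the functional equation $M_{3}=R_{1}\left( t,z;a,b\right) \left( \mathcal{G}\left( t,w,\overrightarrow{u},r\right) -\mathcal{G}\left( t,\overline{w},\overrightarrow{u},r\right) \right) $, series substitution via (\ref{y66a}) and (\ref{g1}), Cauchy product, and comparison of coefficients of $\frac{t^{n}}{n!}$. The only cosmetic difference is that the paper first distributes $R_{1}$ over the difference and writes two separate products before convolving, whereas you keep the difference inside a single series; the two computations are identical.
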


\begin{proof}
By using (\ref{y6}), (\ref{g1}) and (\ref{M3}), we derive the following
functional equation:%
\[
M_{3}\left( t,w,z,\overrightarrow{u},r,a,b\right) =R_{1}\left(
t,z;a,b\right) \left( \mathcal{G}\left( t,w,\overrightarrow{u},r\right) -%
\mathcal{G}\left( t,\overline{w},\overrightarrow{u},r\right) \right) .
\]%
From the above equation, we obtain%
\begin{eqnarray*}
\sum\limits_{n=0}^{\infty }h_{2}\left( n,w,z;\overrightarrow{u},r,a,b\right) 
\frac{t^{n}}{n!} &=&\sum\limits_{n=0}^{\infty }y_{6}^{(z)}\left(
n;a,b\right) \frac{t^{n}}{n!}\sum\limits_{n=0}^{\infty }\mathcal{K}\left(
n;w,\overrightarrow{u},r\right) \frac{t^{n}}{n!} \\
&&-\sum\limits_{n=0}^{\infty }y_{6}^{(z)}\left( n;a,b\right) \frac{t^{n}}{n!}%
\sum\limits_{n=0}^{\infty }\mathcal{K}\left( n;\overline{w},\overrightarrow{u%
},r\right) \frac{t^{n}}{n!}.
\end{eqnarray*}%
Therefore%
\begin{eqnarray*}
&&\sum\limits_{n=0}^{\infty }h_{2}\left( n,w,z;\overrightarrow{u}%
,r,a,b\right) \frac{t^{n}}{n!} \\
&=&\sum\limits_{n=0}^{\infty }\sum\limits_{j=0}^{n}\binom{n}{j}%
y_{6}^{(z)}\left( n-j;a,b\right) \left( \mathcal{K}\left( j;w,%
\overrightarrow{u},r\right) -\mathcal{K}\left( j;\overline{w},%
\overrightarrow{u},r\right) \right) \frac{t^{n}}{n!}.
\end{eqnarray*}%
Comparing the coefficients of $\frac{t^{n}}{n!}$ on both sides of the above
equation we arrive at the desired result.
\end{proof}

\subsection{Identities for Hermite-based $r$-parametric Milne-Thomson-type
polynomials}

By using (\ref{M1})-(\ref{M3}), we give identities and relations for
Milne-Thomson type polynomials and Hermite-type polynomials including
Hermite-based $r$-parametric Milne-Thomson-type polynomials.

\begin{theorem}
Let $\overrightarrow{u}=\left( u_{1},u_{2},u_{3},\ldots ,u_{r}\right) $ and $%
w=x+iy$. Then we have%
\begin{equation}
h\left( n,w,z;\overrightarrow{u},r,a,b\right) =\sum\limits_{j=0}^{n}\binom{n%
}{j}y_{6}^{(z)}\left( n-j;a,b\right) \mathcal{K}\left( j;w,\overrightarrow{u}%
,r\right) .  \label{NK3}
\end{equation}
\end{theorem}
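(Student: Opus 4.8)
The plan is to proceed exactly as in the proofs of the two immediately preceding theorems, which established (\ref{NK1}) and (\ref{NK2}): extract a functional equation from the defining relation (\ref{M1}), rewrite it as a product of two exponential generating functions, invoke the Cauchy product, and then compare coefficients of $\frac{t^{n}}{n!}$. The key observation is that $h\left(n,w,z;\overrightarrow{u},r,a,b\right)$ is defined as the coefficient in the single product $M_{1}=\left(b+f(t,a)\right)^{z}\mathcal{G}$, so no symmetrization in $w$ and $\overline{w}$ enters, and the computation is even shorter than those for $h_{1}$ and $h_{2}$.

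Concretely, I would first recall from (\ref{y66a}) that $\left(b+f(t,a)\right)^{z}=R_{1}\left(t,z;a,b\right)$, so that (\ref{M1}) yields the functional equation
\[
M_{1}\left(t,w,z,\overrightarrow{u},r,a,b\right)=R_{1}\left(t,z;a,b\right)\,\mathcal{G}\left(t,w,\overrightarrow{u},r\right).
\]
Next I would substitute the series (\ref{y66a}) for $R_{1}$ and (\ref{g1}) for $\mathcal{G}$, obtaining a product of the two exponential generating functions $\sum_{n}y_{6}^{(z)}(n;a,b)\frac{t^{n}}{n!}$ and $\sum_{n}\mathcal{K}(n;w,\overrightarrow{u},r)\frac{t^{n}}{n!}$. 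Applying the standard Cauchy product rule for exponential generating functions then converts this product into $\sum_{n}\bigl(\sum_{j=0}^{n}\binom{n}{j}y_{6}^{(z)}(n-j;a,b)\,\mathcal{K}(j;w,\overrightarrow{u},r)\bigr)\frac{t^{n}}{n!}$, and comparing the coefficient of $\frac{t^{n}}{n!}$ with the series on the right of (\ref{M1}) gives (\ref{NK3}).

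I do not expect any genuine obstacle here: the argument is a routine Cauchy-product manipulation, identical in spirit to the earlier proofs, and the only technical point is the legitimacy of multiplying the two series termwise, which is justified because they are formal power series (equivalently, they converge in a common neighbourhood of $t=0$ governed by the analyticity domain of $f$). As an independent cross-check, (\ref{NK3}) is consistent with the decomposition (\ref{1yH}): averaging the formulas (\ref{NK1}) and (\ref{NK2}) collapses the symmetric and antisymmetric combinations of $\mathcal{K}(j;w,\cdot)$ and $\mathcal{K}(j;\overline{w},\cdot)$ to the single term $\mathcal{K}(j;w,\overrightarrow{u},r)$, reproducing exactly the right-hand side above and thereby confirming the result.
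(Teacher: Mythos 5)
Your proposal is correct and takes essentially the same approach as the paper's own proof: both derive the functional equation $M_{1}\left( t,w,z,\overrightarrow{u},r,a,b\right) =R_{1}\left( t,z;a,b\right) \mathcal{G}\left( t,w,\overrightarrow{u},r\right)$ from (\ref{y66a}), (\ref{g1}) and (\ref{M1}), expand both factors as exponential generating functions, take the Cauchy product, and compare coefficients of $\frac{t^{n}}{n!}$. Your closing cross-check via (\ref{1yH}), (\ref{NK1}) and (\ref{NK2}) is an extra consistency verification not present in the paper, but the core argument is identical.
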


\begin{proof}
By using (\ref{y66a}), (\ref{g1}) and (\ref{M1}), we derive the following
functional equation:%
\[
M_{1}\left( t,w,z,\overrightarrow{u},r,a,b\right) =R_{1}\left(
t,z;a,b\right) \mathcal{G}\left( t,w,\overrightarrow{u},r\right) .
\]%
From the above equation, we have%
\[
\sum\limits_{n=0}^{\infty }h\left( n,w,z;\overrightarrow{u},r,a,b\right) 
\frac{t^{n}}{n!}=\sum\limits_{n=0}^{\infty }y_{6}^{(z)}\left( n;a,b\right) 
\frac{t^{n}}{n!}\sum\limits_{n=0}^{\infty }\mathcal{K}\left( n;w,%
\overrightarrow{u},r\right) \frac{t^{n}}{n!}.
\]%
Therefore%
\begin{eqnarray*}
&&\sum\limits_{n=0}^{\infty }h\left( n,w,z;\overrightarrow{u},r,a,b\right) 
\frac{t^{n}}{n!} \\
&=&\sum\limits_{n=0}^{\infty }\sum\limits_{j=0}^{n}\binom{n}{j}%
y_{6}^{(z)}\left( n-j;a,b\right) \mathcal{K}\left( j;w,\overrightarrow{u}%
,r\right) \frac{t^{n}}{n!}.
\end{eqnarray*}%
Comparing the coefficients of $\frac{t^{n}}{n!}$ on both sides of the above
equation we arrive at the desired result.
\end{proof}

By using Euler's formula, we modify (\ref{M2}) as follows:%
\begin{eqnarray}
B\left( t,x,y,z,\overrightarrow{u},r,a,b\right)  &=&2\left( b+f\left(
t,a\right) \right) ^{z}\exp \left( xt\right) M_{4}\left( t,y,\overrightarrow{%
u},r\right)   \label{M2a} \\
&=&\sum\limits_{n=0}^{\infty }\mathfrak{h}_{1}\left( n,x,y,z;\overrightarrow{%
u},r,a,b\right) \frac{t^{n}}{n!},  \nonumber
\end{eqnarray}%
where%
\begin{equation}
M_{4}\left( t,y,\overrightarrow{u},r\right) =\exp \left(
\sum\limits_{j=1}^{r}u_{j}t^{j}\right) \cos \left( yt\right)
=\sum\limits_{n=0}^{\infty }C_{n}\left( \overrightarrow{u},y;r\right) \frac{%
t^{n}}{n!}.  \label{1Y2c}
\end{equation}%
Observe that when $r=1$, (\ref{1Y2c}) reduces to the (\ref{6a}). Setting $y=0
$ in (\ref{1Y2c}), we have%
\[
H_{n}\left( \overrightarrow{u},r\right) =C_{n}\left( \overrightarrow{u}%
,0;r\right) .
\]

\begin{theorem}
Let $\overrightarrow{u}=\left( u_{1},u_{2},u_{3},\ldots ,u_{r}\right) $.
Then we have%
\begin{equation}
\mathfrak{h}_{1}\left( n,x,y,z;\overrightarrow{u},r,a,b\right)
=2\sum\limits_{j=0}^{n}\binom{n}{j}y_{6}\left( n-j;x,z;a,b\right)
C_{j}\left( \overrightarrow{u},y;r\right) .  \label{2.3}
\end{equation}
\end{theorem}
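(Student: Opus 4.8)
The plan is to reproduce the generating-function/Cauchy-product scheme already used in this section, exactly as in the derivations of (\ref{NK1})--(\ref{NK3}). First I would rewrite the defining relation (\ref{M2a}) as a product of two power series whose coefficients are, respectively, the numbers $y_{6}(n;x,z;a,b)$ and the polynomials $C_{n}(\overrightarrow{u},y;r)$, and then read off the coefficient of $t^{n}/n!$ by comparing both sides.

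The one identification to make is for the first factor. Setting $y=0$ in the Milne-Thomson generating function (\ref{y6}) gives the Appell-type expansion
\[
\left( b+f\left( t,a\right) \right) ^{z}\exp \left( xt\right)
=\sum_{n=0}^{\infty }y_{6}\left( n;x,z;a,b\right) \frac{t^{n}}{n!},
\]
so the factor $\left( b+f(t,a)\right) ^{z}\exp (xt)$ appearing in (\ref{M2a}) is precisely the generating function for $y_{6}(n;x,z;a,b)$. For the second factor I would simply invoke the definition (\ref{1Y2c}), namely $M_{4}(t,y,\overrightarrow{u},r)=\sum_{n=0}^{\infty }C_{n}(\overrightarrow{u},y;r)\,t^{n}/n!$.

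Substituting both expansions into (\ref{M2a}) produces the functional equation
\[
\sum_{n=0}^{\infty }\mathfrak{h}_{1}\left( n,x,y,z;\overrightarrow{u}
,r,a,b\right) \frac{t^{n}}{n!}=2\left( \sum_{n=0}^{\infty }y_{6}\left(
n;x,z;a,b\right) \frac{t^{n}}{n!}\right) \left( \sum_{n=0}^{\infty
}C_{n}\left( \overrightarrow{u},y;r\right) \frac{t^{n}}{n!}\right) .
\]
Applying the exponential (binomial) Cauchy product to the right-hand side collects the coefficient of $t^{n}/n!$ into $2\sum_{j=0}^{n}\binom{n}{j}y_{6}(n-j;x,z;a,b)\,C_{j}(\overrightarrow{u},y;r)$, and comparing the coefficients of $t^{n}/n!$ on both sides yields the claimed identity (\ref{2.3}).

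I do not anticipate any genuine obstacle: the only non-routine step is recognizing that the factor $\left( b+f(t,a)\right) ^{z}\exp (xt)$ is the $y=0$ specialization of (\ref{y6}), i.e. the generating function for $y_{6}(n;x,z;a,b)$. Once that identification is in place, the argument is the standard coefficient comparison used throughout this section, and the overall factor of $2$ simply rides along from the definition (\ref{M2a}).
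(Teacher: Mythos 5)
Your proposal is correct and follows essentially the same route as the paper's own proof: the paper likewise combines (\ref{y6}), (\ref{1Y2c}) and (\ref{M2a}) to obtain the product of the two exponential generating functions (with the factor $2$), applies the Cauchy product, and compares coefficients of $\frac{t^{n}}{n!}$. Your explicit identification of $\left( b+f\left( t,a\right) \right) ^{z}\exp \left( xt\right)$ as the $y=0$ specialization of (\ref{y6}), i.e.\ the generating function for the Appell-type polynomials $y_{6}\left( n;x,z;a,b\right)$, is exactly the step the paper performs implicitly when it cites (\ref{y6}).
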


\begin{proof}
Combining (\ref{y6}), (\ref{1Y2c}) and (\ref{M2a}), we get%
\[
\sum\limits_{n=0}^{\infty }\mathfrak{h}_{1}\left( n,x,y,z;\overrightarrow{u}%
,r,a,b\right) \frac{t^{n}}{n!}=2\sum\limits_{n=0}^{\infty }y_{6}\left(
n;x,z;a,b\right) \frac{t^{n}}{n!}\sum\limits_{n=0}^{\infty }C_{n}\left( 
\overrightarrow{u},y;r\right) \frac{t^{n}}{n!}.
\]%
Therefore%
\[
\sum\limits_{n=0}^{\infty }\mathfrak{h}_{1}\left( n,x,y,z;\overrightarrow{u}%
,r,a,b\right) \frac{t^{n}}{n!}=2\sum\limits_{n=0}^{\infty
}\sum\limits_{j=0}^{n}\binom{n}{j}y_{6}\left( n-j;x,z;a,b\right) C_{j}\left( 
\overrightarrow{u},y;r\right) \frac{t^{n}}{n!}.
\]%
Comparing the coefficients of $\frac{t^{n}}{n!}$ on both sides of the above
equation we arrive at the desired result.
\end{proof}

Substituting $r=1$ into (\ref{2.3}), we have%
\[
\mathfrak{h}_{1}\left( n,x,y,z;u_{1},1,a,b\right) =2\sum\limits_{j=0}^{n}%
\binom{n}{j}y_{6}\left( n-j;x,z;a,b\right) C_{j}\left( u_{1},y;1\right) ,
\]%
where%
\[
C_{j}(u_{1},y)=C_{j}\left( u_{1},y;1\right) .
\]

Substituting $b=0$,%
\[
f\left( t,a\right) =\frac{t}{a\exp \left( t\right) -1}
\]%
and $\overrightarrow{u}=\overrightarrow{0}$ into (\ref{M2a}), we have%
\begin{equation}
\sum\limits_{n=0}^{\infty }\mathfrak{h}_{1}\left( n,x,y,z;\overrightarrow{0}%
,r,a,0\right) \frac{t^{n}}{n!}=\sum\limits_{n=0}^{\infty }2\mathcal{B}%
_{n}^{\left( C,z\right) }\left( x,y;a\right) \frac{t^{n}}{n!},  \label{Bernl}
\end{equation}%
where%
\[
\left( \frac{t}{a\exp \left( t\right) -1}\right) ^{z}\exp \left( xt\right)
\cos \left( yt\right) =\sum\limits_{n=0}^{\infty }\mathcal{B}_{n}^{\left(
C,z\right) }\left( x,y;a\right) \frac{t^{n}}{n!}
\]%
where the polynomials $\mathcal{B}_{n}^{\left( C,z\right) }\left(
x,y;a\right) $ were defined by Srivastava at al (\textit{cf}. \cite%
{kizilates}). Comparing the coefficients of $\frac{t^{n}}{n!}$ on both sides
of equation (\ref{Bernl}), we get the following result:

\begin{corollary}
\[
\mathfrak{h}_{1}\left( n,x,y,z;\overrightarrow{0},r,a,0\right) =2\mathcal{B}%
_{n}^{\left( C,z\right) }\left( x,y;a\right) .
\]
\end{corollary}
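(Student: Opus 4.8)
The plan is to specialize the generating function in (\ref{M2a}) and then match the resulting series against the defining expansion of the polynomials $\mathcal{B}_{n}^{\left( C,z\right) }\left( x,y;a\right) $. First I would carry out the three prescribed substitutions $b=0$, $f\left( t,a\right) =\frac{t}{a\exp \left( t\right) -1}$, and $\overrightarrow{u}=\overrightarrow{0}$ in (\ref{M2a}). The only step requiring an observation is the effect of $\overrightarrow{u}=\overrightarrow{0}$ on the factor $M_{4}$: from its definition (\ref{1Y2c}), the exponential $\exp \left( \sum_{j=1}^{r}u_{j}t^{j}\right) $ collapses to $1$, so that $M_{4}\left( t,y,\overrightarrow{0},r\right) =\cos \left( yt\right) $ for every value of $r$.

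With these substitutions the left-hand factor $\left( b+f\left( t,a\right) \right) ^{z}$ becomes $\left( \frac{t}{a\exp \left( t\right) -1}\right) ^{z}$, and hence (\ref{M2a}) reduces to
\[
B\left( t,x,y,z,\overrightarrow{0},r,a,0\right) =2\left( \frac{t}{a\exp \left( t\right) -1}\right) ^{z}\exp \left( xt\right) \cos \left( yt\right) .
\]
The right-hand side is, up to the constant factor $2$, exactly the series that defines the polynomials $\mathcal{B}_{n}^{\left( C,z\right) }\left( x,y;a\right) $ of Srivastava et al.\ Inserting the defining expansion of that generating function on the right then yields equation (\ref{Bernl}).

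Finally I would compare the coefficients of $\frac{t^{n}}{n!}$ on the two sides of (\ref{Bernl}) to conclude that $\mathfrak{h}_{1}\left( n,x,y,z;\overrightarrow{0},r,a,0\right) =2\mathcal{B}_{n}^{\left( C,z\right) }\left( x,y;a\right) $. There is no genuine obstacle in this argument, which is a direct specialization followed by coefficient comparison. The only point to check carefully is that the Hermite contribution truly vanishes under $\overrightarrow{u}=\overrightarrow{0}$, so that the identity is independent of $r$, and that the reduced generating function is matched correctly against the definition of $\mathcal{B}_{n}^{\left( C,z\right) }\left( x,y;a\right) $.
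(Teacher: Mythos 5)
Your proposal is correct and follows essentially the same route as the paper: substitute $b=0$, $f\left( t,a\right) =\frac{t}{a\exp \left( t\right) -1}$, and $\overrightarrow{u}=\overrightarrow{0}$ into (\ref{M2a}), identify the resulting generating function with that of $\mathcal{B}_{n}^{\left( C,z\right) }\left( x,y;a\right) $, and compare coefficients of $\frac{t^{n}}{n!}$. Your explicit remark that $M_{4}\left( t,y,\overrightarrow{0},r\right) =\cos \left( yt\right) $, making the identity independent of $r$, is a point the paper leaves implicit, but it is the same argument.
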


\begin{remark}
When $a=1$ and $z=1$, the polynomials $\mathfrak{h}_{1}\left( n,x,y,z;%
\overrightarrow{0},r,a,0\right) $ reduce to following well-known polynomials:%
\[
\mathfrak{h}_{1}\left( n,x,y,1;\overrightarrow{0},r,1,0\right) =2\mathcal{B}%
_{n}^{\left( C,1\right) }\left( x,y;1\right) =2B_{n}^{\left( C\right)
}\left( x,y\right) ,
\]%
where the polynomials $B_{n}^{\left( C\right) }\left( x,y\right) $ denote
the cosine-Bernoulli polynomials (\textit{cf}. \cite{KimRyoo}). When $y=0$,
the polynomials $\mathcal{B}_{n}^{\left( C,z\right) }\left( x,y;a\right) $
reduce to the Apostol-Bernoulli polynomials of order $z$:%
\[
\mathcal{B}_{n}^{\left( z\right) }\left( x;a\right) =\mathcal{B}_{n}^{\left(
C,z\right) }\left( x,0;a\right) 
\]%
(\textit{cf}. \cite{SrivastavaChoi2}, \cite{Srivastava2018}).
\end{remark}

On the other hand, using (\ref{MBC2}), we have%
\[
\sum\limits_{n=0}^{\infty }\mathfrak{h}_{1}\left( n,x,y,z;\overrightarrow{0}%
,r,a,0\right) \frac{t^{n}}{n!}=2\sum\limits_{n=0}^{\infty }\mathcal{B}%
_{n}^{\left( z\right) }\left( x;a\right) \frac{t^{n}}{n!}\sum\limits_{n=0}^{%
\infty }(-1)^{n}\frac{\left( yt\right) ^{2n}}{\left( 2n\right) !}.
\]

Therefore, we obtain%
\[
\sum\limits_{n=0}^{\infty }\mathfrak{h}_{1}\left( n,x,y,z;\overrightarrow{0}%
,r,a,0\right) \frac{t^{n}}{n!}=\sum\limits_{n=0}^{\infty }\left(
2\sum\limits_{n=0}^{\left[ \frac{n}{2}\right] }(-1)^{j}\binom{n}{2j}y^{2j}%
\mathcal{B}_{n-2j}^{\left( z\right) }\left( x;a\right) \right) \frac{t^{n}}{%
n!}.
\]%
Comparing the coefficients of $\frac{t^{n}}{n!}$ on both sides of the above
equation, we arrive at the following result:

\begin{corollary}
Let $r$-tuples $\overrightarrow{0}=(0,0,...,0)$. Then we have%
\[
\mathfrak{h}_{1}\left( n,x,y,z;\overrightarrow{0},r,a,0\right)
=2\sum\limits_{n=0}^{\left[ \frac{n}{2}\right] }(-1)^{j}\binom{n}{2j}y^{2j}%
\mathcal{B}_{n-2j}^{\left( z\right) }\left( x;a\right) .
\]
\end{corollary}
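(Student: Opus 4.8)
The plan is to specialize the master generating function in (\ref{M2a}) so that both of its factors become generating functions already catalogued in the introduction, and then to read off the coefficients from a single Cauchy product. First I would substitute $b=0$, $f\left( t,a\right) =\frac{t}{a\exp \left( t\right) -1}$ and $\overrightarrow{u}=\overrightarrow{0}$ into (\ref{M2a}). Because $\overrightarrow{u}=\overrightarrow{0}$ annihilates the inner sum $\sum_{j=1}^{r}u_{j}t^{j}$, the factor $M_{4}\left( t,y,\overrightarrow{0},r\right) $ from (\ref{1Y2c}) collapses to $\cos \left( yt\right) $, so the generating function for the specialized polynomials reads
\[
\sum_{n=0}^{\infty }\mathfrak{h}_{1}\left( n,x,y,z;\overrightarrow{0},r,a,0\right) \frac{t^{n}}{n!}=2\left( \frac{t}{a\exp \left( t\right) -1}\right) ^{z}\exp \left( xt\right) \cos \left( yt\right) .
\]

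Next I would recognize the first two factors on the right as a known object: with $\lambda =a$ and $k=z$, the product $\left( \frac{t}{a\exp (t)-1}\right) ^{z}\exp (xt)$ is exactly the defining generating function (\ref{ApostolB}) for the Apostol-Bernoulli polynomials of order $z$, hence it equals $\sum_{n=0}^{\infty }\mathcal{B}_{n}^{(z)}(x;a)\frac{t^{n}}{n!}$. I would then expand the remaining factor through its Maclaurin series $\cos \left( yt\right) =\sum_{m=0}^{\infty }(-1)^{m}\frac{y^{2m}t^{2m}}{(2m)!}$, so that the right-hand side is a product of two explicit power series.

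The final step is the Cauchy product. Pairing the term $\mathcal{B}_{n-2j}^{(z)}(x;a)\frac{t^{n-2j}}{(n-2j)!}$ from the Apostol-Bernoulli series with the term $(-1)^{j}\frac{y^{2j}t^{2j}}{(2j)!}$ from the cosine series, and renormalizing to the coefficient of $\frac{t^{n}}{n!}$, introduces the factor $\frac{n!}{(n-2j)!\,(2j)!}=\binom{n}{2j}$, with $j$ running over $0\le j\le \left[ \frac{n}{2}\right] $. Carrying the overall constant $2$ and comparing the coefficients of $\frac{t^{n}}{n!}$ on both sides then produces precisely the stated identity. I do not expect a genuine obstacle here: the work is purely bookkeeping of the convolution indices and the binomial normalization, and both series converge on a common disc about $t=0$ (the cosine series everywhere, and the Apostol-Bernoulli series for $\left\vert t\right\vert <\left\vert \log a\right\vert $ when $a\neq 1$), so the coefficient comparison is justified.
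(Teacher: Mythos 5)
Your proposal is correct and takes essentially the same route as the paper: the paper also specializes (\ref{M2a}) at $b=0$, $f\left( t,a\right) =\frac{t}{a\exp \left( t\right) -1}$, $\overrightarrow{u}=\overrightarrow{0}$, rewrites the resulting generating function as the product of the Apostol-Bernoulli series $\sum_{n=0}^{\infty }\mathcal{B}_{n}^{\left( z\right) }\left( x;a\right) \frac{t^{n}}{n!}$ with the cosine Maclaurin series, and compares coefficients of $\frac{t^{n}}{n!}$ in the Cauchy product. The only difference is bookkeeping --- the paper first passes through the two-parametric polynomials $\mathcal{B}_{n}^{\left( C,z\right) }\left( x,y;a\right) $ of (\ref{Bernl}) before factoring --- and your convolution correctly treats the summation index as $j=0,\dots ,\left[ \frac{n}{2}\right] $ (the statement's lower limit $n=0$ is a typo for $j=0$).
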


We modify (\ref{MBC2}) as follows:%
\begin{equation}
\sum\limits_{n=0}^{\infty }\mathfrak{h}_{1}\left( n,x,y,z;\overrightarrow{0}%
,r,-a,0\right) \frac{t^{n}}{n!}=\frac{\left( -1\right) ^{z}t^{z}}{2^{z-1}}%
\sum\limits_{n=0}^{\infty }\mathcal{E}_{n}^{\left( C,z\right) }\left(
x,y;a\right) \frac{t^{n}}{n!},  \label{1Y2Ec}
\end{equation}%
where $\mathcal{E}_{n}^{\left( C,z\right) }\left( x,y;a\right) $ denote the
two parametric kinds of Apostol-Euler polynomials of order $z$, which are
defined by%
\begin{equation}
\left( \frac{2}{a\exp \left( t\right) +1}\right) ^{z}\exp \left( xt\right)
\cos \left( yt\right) =\sum\limits_{n=0}^{\infty }\mathcal{E}_{n}^{\left(
C,z\right) }\left( x,y;a\right) \frac{t^{n}}{n!}  \label{Eulerrr}
\end{equation}%
(\textit{cf}. \cite{kizilates}).

Comparing the coefficients of $\frac{t^{n}}{n!}$ on both sides of equation (%
\ref{1Y2Ec}), we get the following result:

\begin{corollary}
Let $r$-tuples $\overrightarrow{0}=(0,0,...,0)$. Then we have%
\begin{equation}
\mathfrak{h}_{1}\left( n,x,y,z;\overrightarrow{0},r,-a,0\right) =\frac{%
\left( -1\right) ^{z}\left( n\right) ^{\underline{z}}}{2^{z-1}}\mathcal{E}%
_{n-z}^{\left( C,z\right) }\left( x,y;a\right) .  \label{1Y2Ecc}
\end{equation}
\end{corollary}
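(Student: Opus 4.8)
The plan is to obtain the identity by a direct coefficient comparison in the functional equation (\ref{1Y2Ec}), so the only real work is converting the prefactor $t^{z}$ into a falling factorial.

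First I would absorb the factor $t^{z}$ on the right-hand side of (\ref{1Y2Ec}) into the summation, writing
\[
\frac{\left( -1\right) ^{z}t^{z}}{2^{z-1}}\sum_{n=0}^{\infty }\mathcal{E}_{n}^{\left( C,z\right) }\left( x,y;a\right) \frac{t^{n}}{n!}=\frac{\left( -1\right) ^{z}}{2^{z-1}}\sum_{n=0}^{\infty }\mathcal{E}_{n}^{\left( C,z\right) }\left( x,y;a\right) \frac{t^{n+z}}{n!}.
\]
Next I would re-index this sum by setting $m=n+z$, so that $n=m-z$ and the summation runs over $m\geq z$; the general term then becomes $\mathcal{E}_{m-z}^{\left( C,z\right) }\left( x,y;a\right) \frac{t^{m}}{\left( m-z\right) !}$.

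Then I would restore the exponential-generating-function normalization by writing $\frac{1}{\left( m-z\right) !}=\frac{1}{m!}\cdot \frac{m!}{\left( m-z\right) !}$ and recognizing the ratio $\frac{m!}{\left( m-z\right) !}=m\left( m-1\right) \cdots \left( m-z+1\right) $ as the falling factorial $\left( m\right) ^{\underline{z}}$, which matches the definition $\left( \alpha \right) ^{\underline{v}}=\left( -1\right) ^{v}\left( -\alpha \right) _{v}$ from the preliminaries for $\alpha =m$ and $v=z$. With this the right-hand side of (\ref{1Y2Ec}) acquires the form $\sum_{m\geq z}\frac{\left( -1\right) ^{z}\left( m\right) ^{\underline{z}}}{2^{z-1}}\mathcal{E}_{m-z}^{\left( C,z\right) }\left( x,y;a\right) \frac{t^{m}}{m!}$.

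Finally, comparing the coefficient of $\frac{t^{m}}{m!}$ on both sides of (\ref{1Y2Ec}) and renaming $m$ as $n$ gives the asserted formula. I do not expect a genuine obstacle here: the one point deserving care is the bookkeeping of the index shift together with the verification that the falling-factorial normalization of the preliminaries indeed equals $\frac{n!}{\left( n-z\right) !}$ for integer $z$. It is also worth noting that for $0\leq n<z$ the factor $\left( n\right) ^{\underline{z}}$ vanishes, consistent with the absence of terms with $m<z$ on the right-hand side, so the identity holds for all $n\geq 0$.
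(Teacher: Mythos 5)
Your proposal is correct and follows essentially the same route as the paper: the paper also derives the result by comparing coefficients of $\frac{t^{n}}{n!}$ in equation (\ref{1Y2Ec}), and your index shift $m=n+z$ together with the identification $\frac{m!}{(m-z)!}=(m)^{\underline{z}}$ simply makes explicit the bookkeeping the paper leaves implicit. Your remark that $(n)^{\underline{z}}$ vanishes for $0\leq n<z$ is a welcome clarification that the paper omits.
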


\begin{remark}
When $a=1$ and $z=1$, the polynomials $\mathfrak{h}_{1}\left( n,x,y,z;%
\overrightarrow{0},r,-a,0\right) $ reduce to the polynomials the$\mathcal{\ }
$cosine-Euler polynomials:%
\[
\mathfrak{h}_{1}\left( n,x,y,1;\overrightarrow{0},r,-1,0\right) =-n\mathcal{E%
}_{n-1}^{\left( C,1\right) }\left( x,y;1\right) =-nE_{n-1}^{\left( C\right)
}\left( x,y\right) 
\]%
(\textit{cf}. \cite{KimRyoo}, \cite{masjed2018}). Setting $y=0$ in (\ref%
{Eulerrr}), the polynomials $\mathcal{E}_{n}^{\left( C,z\right) }\left(
x,y;a\right) $ reduce to the Apostol-Euler polynomials of order $z$:%
\[
\mathcal{E}_{n}^{\left( z\right) }\left( x;a\right) =\mathcal{E}_{n}^{\left(
C,z\right) }\left( x,0;a\right) 
\]%
(\textit{cf}. \cite{SrivastavaChoi2}, \cite{Srivastava2018}).
\end{remark}

By using (\ref{1Y2Ec}), we have%
\[
\sum\limits_{n=0}^{\infty }\mathfrak{h}_{1}\left( n,x,y,z;\overrightarrow{0}%
,r,-a,0\right) \frac{t^{n}}{n!}=\frac{\left( -1\right) ^{z}t^{z}}{2^{z-1}}%
\sum\limits_{n=0}^{\infty }\mathcal{E}_{n}^{\left( z\right) }\left(
x;a\right) \frac{t^{n}}{n!}\sum\limits_{n=0}^{\infty }\left( -1\right) ^{n}%
\frac{\left( yt\right) ^{2n}}{\left( 2n\right) !}.
\]%
Therefore%
\[
\sum\limits_{n=0}^{\infty }\mathfrak{h}_{1}\left( n,x,y,z;\overrightarrow{0}%
,r,-a,0\right) \frac{t^{n}}{n!}=\sum\limits_{n=0}^{\infty }\left( \left(
-1\right) ^{z}2^{1-z}\sum\limits_{j=0}^{\left[ \frac{n}{2}\right] }\left(
-1\right) ^{j}\binom{n}{2j}\left( n-2j\right) ^{\underline{z}}y^{2j}\mathcal{%
E}_{n-2j-z}^{\left( z\right) }\left( x;a\right) \right) \frac{t^{n}}{n!}.
\]%
Comparing the coefficients of $\frac{t^{n}}{n!}$ on both sides of the above
equation we arrive at the following result:

\begin{corollary}
Let $r$-tuples $\overrightarrow{0}=(0,0,...,0)$. Then we have%
\[
\mathfrak{h}_{1}\left( n,x,y,z;\overrightarrow{0},r,-a,0\right) =\left(
-1\right) ^{z}2^{1-z}\sum\limits_{j=0}^{\left[ \frac{n}{2}\right] }\left(
-1\right) ^{j}\binom{n}{2j}\left( n-2j\right) ^{\underline{z}}y^{2j}\mathcal{%
E}_{n-2j-z}^{\left( z\right) }\left( x;a\right) .
\]
\end{corollary}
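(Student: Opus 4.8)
The plan is to obtain the formula by reading the coefficients off the factored generating function, so the argument is a coefficient comparison of exactly the sort already used throughout this section. First I would start from the generating-function identity (\ref{1Y2Ec}), whose right-hand side is $(-1)^{z}2^{1-z}t^{z}\sum_{n\geq 0}\mathcal{E}_{n}^{(C,z)}(x,y;a)\,t^{n}/n!$. Invoking the definition (\ref{Eulerrr}) of the two-parametric Apostol--Euler polynomials together with the order-$z$ Apostol--Euler generating function (\ref{ApostolE}) (taken with $\lambda=a$ and $k=z$), the series $\sum_{n\geq 0}\mathcal{E}_{n}^{(C,z)}(x,y;a)\,t^{n}/n!$ factors as the product of $\sum_{n\geq 0}\mathcal{E}_{n}^{(z)}(x;a)\,t^{n}/n!$ and $\cos(yt)$. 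Replacing $\cos(yt)$ by its Maclaurin series $\sum_{n\geq 0}(-1)^{n}(yt)^{2n}/(2n)!$ then produces precisely the factored right-hand side displayed just before the statement.

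Next I would carry out the Cauchy product of the two series $\sum_{m}\mathcal{E}_{m}^{(z)}(x;a)\,t^{m}/m!$ and $\sum_{k}(-1)^{k}y^{2k}t^{2k}/(2k)!$, while keeping track of the global monomial factor $(-1)^{z}2^{1-z}t^{z}$. The factor $t^{z}$ shifts the summation index, so that extracting the coefficient of $t^{n}/n!$ forces $z+m+2k=n$, i.e. $m=n-2k-z$. The numerical weight attached to the term with index $k=j$ is then $\frac{n!}{(2j)!\,(n-2j-z)!}$, which I would rewrite, using the falling-factorial notation $(\alpha)^{\underline{v}}$ introduced at the beginning of the paper, as $\binom{n}{2j}(n-2j)^{\underline{z}}$ by means of the elementary identity $\binom{n}{2j}(n-2j)^{\underline{z}}=\frac{n!}{(2j)!\,(n-2j)!}\cdot\frac{(n-2j)!}{(n-2j-z)!}$.

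Finally, comparing the coefficients of $t^{n}/n!$ on the two sides yields the asserted formula. Here I would remark that the sum may be taken up to $\left[\frac{n}{2}\right]$ since one needs $n-2j\geq 0$, and that for a positive integer $z$ the falling factorial $(n-2j)^{\underline{z}}$ contains a zero factor whenever $0\leq n-2j<z$, which automatically removes exactly those terms in which $\mathcal{E}_{n-2j-z}^{(z)}$ would carry a negative lower index. The only genuinely delicate point is the bookkeeping in the second step, namely correctly merging the index shift coming from $t^{z}$ with the binomial coefficient coming from the Cauchy product so as to recover the falling factorial $(n-2j)^{\underline{z}}$; once that identity is in hand, the remainder is the routine comparison of generating-function coefficients.
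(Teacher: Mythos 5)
Your proposal is correct and is essentially the paper's own argument: the paper likewise starts from (\ref{1Y2Ec}), replaces $\sum_{n\geq 0}\mathcal{E}_{n}^{\left( C,z\right) }\left( x,y;a\right) \frac{t^{n}}{n!}$ by the product of $\sum_{n\geq 0}\mathcal{E}_{n}^{\left( z\right) }\left( x;a\right) \frac{t^{n}}{n!}$ with the Maclaurin series of $\cos \left( yt\right) $, and then compares coefficients of $\frac{t^{n}}{n!}$ after absorbing the factor $t^{z}$ into the sum. The only difference is that you spell out the index bookkeeping, namely the identity $\frac{n!}{\left( 2j\right) !\left( n-2j-z\right) !}=\binom{n}{2j}\left( n-2j\right) ^{\underline{z}}$ and the automatic vanishing of the terms with $n-2j<z$, which the paper leaves implicit.
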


By using Euler's formula, we modify and unify equation (\ref{M3}) as follows:%
\begin{eqnarray}
B_{1}\left( t,x,y,z,\overrightarrow{u},r,a,b\right)  &=&2\left( b+f\left(
t,a\right) \right) ^{z}\exp (xt)M_{5}\left( t,y,\overrightarrow{u},r\right) 
\label{M3a} \\
&=&\sum\limits_{n=0}^{\infty }\mathfrak{h}_{2}\left( n,w,z;\overrightarrow{u}%
,r,a,b\right) \frac{t^{n}}{n!}.  \nonumber
\end{eqnarray}%
where%
\begin{equation}
M_{5}\left( t,y,\overrightarrow{u},r\right) =\exp \left(
\sum\limits_{j=1}^{r}u_{j}t^{j}\right) \sin \left( yt\right)
=\sum\limits_{n=0}^{\infty }S_{n}\left( \overrightarrow{u},y;r\right) \frac{%
t^{n}}{n!}.  \label{1Y2s}
\end{equation}%
Observe that when $r=1$, (\ref{1Y2s}) reduces to the (\ref{6b}). Setting $y=%
\frac{\pi }{2}$ in (\ref{1Y2s}), we have%
\[
H_{n}\left( \overrightarrow{u},r\right) =S_{n}\left( \overrightarrow{u},%
\frac{\pi }{2};r\right) .
\]

\begin{theorem}
Let $\overrightarrow{u}=\left( u_{1},u_{2},u_{3},\ldots ,u_{r}\right) $.
Then we have%
\begin{equation}
\mathfrak{h}_{2}\left( n,x,y,z;\overrightarrow{u},r,a,b\right)
=2\sum\limits_{j=0}^{n}\binom{n}{j}y_{6}\left( n-j;x,z;a,b\right)
S_{j}\left( \overrightarrow{u},y;r\right) .  \label{2.4}
\end{equation}
\end{theorem}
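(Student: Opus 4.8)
The plan is to mirror the proof of the companion result (\ref{2.3}) for $\mathfrak{h}_{1}$, with the cosine factor replaced by the sine factor. The whole argument rests on factoring the defining generating function $B_{1}$ in (\ref{M3a}) into a product of two exponential generating functions whose coefficients are already known, and then reading off the Cauchy product.

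First I would record the two factors explicitly. Setting the second variable equal to $0$ in (\ref{y6}) gives $\left( b+f\left( t,a\right) \right) ^{z}\exp \left( xt\right) =\sum\limits_{n=0}^{\infty }y_{6}\left( n;x,z;a,b\right) \frac{t^{n}}{n!}$, while (\ref{1Y2s}) gives $M_{5}\left( t,y,\overrightarrow{u},r\right) =\sum\limits_{n=0}^{\infty }S_{n}\left( \overrightarrow{u},y;r\right) \frac{t^{n}}{n!}$. Substituting both into (\ref{M3a}) turns the definition of $\mathfrak{h}_{2}$ into
\[
\sum\limits_{n=0}^{\infty }\mathfrak{h}_{2}\left( n,x,y,z;\overrightarrow{u},r,a,b\right) \frac{t^{n}}{n!}=2\sum\limits_{n=0}^{\infty }y_{6}\left( n;x,z;a,b\right) \frac{t^{n}}{n!}\sum\limits_{n=0}^{\infty }S_{n}\left( \overrightarrow{u},y;r\right) \frac{t^{n}}{n!}.
\]

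Next I would apply the standard Cauchy product for exponential generating functions, which converts the right-hand side into $2\sum\limits_{n=0}^{\infty }\left( \sum\limits_{j=0}^{n}\binom{n}{j}y_{6}\left( n-j;x,z;a,b\right) S_{j}\left( \overrightarrow{u},y;r\right) \right) \frac{t^{n}}{n!}$. Comparing the coefficients of $\frac{t^{n}}{n!}$ on both sides then yields (\ref{2.4}).

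There is essentially no analytic obstacle here: the computation is the same Cauchy-product bookkeeping used for (\ref{2.3}), with $S_{j}$ and $M_{5}$ taking the roles of $C_{j}$ and $M_{4}$. The only points requiring care are (i) correctly identifying the Appell-type factor $\left( b+f(t,a)\right) ^{z}\exp (xt)$ as the $y=0$ specialization of (\ref{y6}), so that its coefficients are the numbers $y_{6}(n;x,z;a,b)$ rather than the full two-variable family, and (ii) carrying the prefactor $2$ from (\ref{M3a}) through to the final identity. Everything else is a routine coefficient comparison.
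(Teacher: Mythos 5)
Your proposal is correct and follows essentially the same route as the paper's own proof: both factor the generating function in (\ref{M3a}) as the product of the Appell-type series $\left( b+f\left( t,a\right) \right) ^{z}\exp \left( xt\right)$ (the $y=0$ specialization of (\ref{y6})) with the series (\ref{1Y2s}), apply the Cauchy product, and compare coefficients of $\frac{t^{n}}{n!}$. Your explicit remark on identifying the coefficients as $y_{6}\left( n;x,z;a,b\right)$ and on carrying the factor $2$ simply makes precise what the paper leaves implicit.
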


\begin{proof}
Combining (\ref{y6}), (\ref{1Y2s}) and (\ref{M3a}), we have%
\[
\sum\limits_{n=0}^{\infty }\mathfrak{h}_{2}\left( n,x,y,z;\overrightarrow{u}%
,r,a,b\right) \frac{t^{n}}{n!}=2\sum\limits_{n=0}^{\infty }y_{6}\left(
n;x,z;a,b\right) \frac{t^{n}}{n!}\sum\limits_{n=0}^{\infty }S_{n}\left( 
\overrightarrow{u},y;r\right) \frac{t^{n}}{n!}.
\]%
Therefore%
\[
\sum\limits_{n=0}^{\infty }\mathfrak{h}_{2}\left( n,x,y,z;\overrightarrow{u}%
,r,a,b\right) \frac{t^{n}}{n!}=2\sum\limits_{n=0}^{\infty
}\sum\limits_{j=0}^{n}\binom{n}{j}y_{6}\left( n-j;x,z;a,b\right) S_{j}\left( 
\overrightarrow{u},y;r\right) \frac{t^{n}}{n!}.
\]%
Comparing the coefficients of $\frac{t^{n}}{n!}$ on both sides of the above
equation we arrive at the desired result.
\end{proof}

Substituting $b=0$,%
\[
f\left( t,a\right) =\frac{t}{a\exp \left( t\right) -1}
\]%
and $\overrightarrow{u}=\overrightarrow{0}$ into (\ref{M3a}), we have the
following equation:%
\begin{equation}
B_{1}\left( t,x,y,z,\overrightarrow{0},r,a,0\right) =2F_{BS}\left(
t,x,y;a,z\right)   \label{MBS}
\end{equation}%
where the function $F_{BS}\left( t,x,y;a,z\right) $ is a generating function
for the two parametric kinds of the Apostol-Bernoulli polynomials of order $z
$,%
\begin{equation}
F_{BS}\left( t,x,y;a,z\right) =\left( \frac{t}{a\exp \left( t\right) -1}%
\right) ^{z}\exp \left( xt\right) \sin \left( yt\right)
=\sum\limits_{n=0}^{\infty }\mathcal{B}_{n}^{\left( S,z\right) }\left(
x,y;a\right) \frac{t^{n}}{n!}  \label{bernoulliS}
\end{equation}%
(\textit{cf}. \cite{kizilates}). Thus, using (\ref{MBS}), we have the
following result:

\begin{corollary}
Let $r$-tuples $\overrightarrow{0}=(0,0,...,0)$. Then we have%
\[
\mathfrak{h}_{2}\left( n,x,y,z;\overrightarrow{0},r,a,0\right) =2\mathcal{B}%
_{n}^{\left( S,z\right) }\left( x,y;a\right) .
\]
\end{corollary}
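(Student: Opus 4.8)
The plan is to obtain this corollary as a direct specialization of the generating function (\ref{M3a}), followed by a comparison of formal-power-series coefficients. First I would impose the three substitutions named in the hypotheses: $b=0$, $f\left( t,a\right) =\frac{t}{a\exp \left( t\right) -1}$, and $\overrightarrow{u}=\overrightarrow{0}$. With $b=0$ the factor $\left( b+f\left( t,a\right) \right) ^{z}$ collapses to $\left( \frac{t}{a\exp \left( t\right) -1}\right) ^{z}$. With $\overrightarrow{u}=\overrightarrow{0}$ the exponential $\exp \left( \sum_{j=1}^{r}u_{j}t^{j}\right) $ occurring in the definition (\ref{1Y2s}) of $M_{5}$ reduces to $1$, so that $M_{5}\left( t,y,\overrightarrow{0},r\right) =\sin \left( yt\right) $, i.e. exactly the sine kernel of (\ref{6b}).

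Consequently the left-hand side of (\ref{M3a}) becomes $2\left( \frac{t}{a\exp \left( t\right) -1}\right) ^{z}\exp \left( xt\right) \sin \left( yt\right) $, which is precisely $2F_{BS}\left( t,x,y;a,z\right) $ in the notation of the definition (\ref{bernoulliS}). This is the functional equation (\ref{MBS}), understood as an identity of formal power series in $t$. In other words, specializing the $\mathfrak{h}_{2}$-generating function at the zero vector and at the Apostol-Bernoulli choice of $f$ turns $B_{1}$ into twice the generating function for the two-parametric sine-Apostol-Bernoulli polynomials.

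The final step is to expand both sides of (\ref{MBS}) as power series in $t$ and match coefficients. The left-hand side equals $\sum_{n=0}^{\infty }\mathfrak{h}_{2}\left( n,x,y,z;\overrightarrow{0},r,a,0\right) \frac{t^{n}}{n!}$ by the defining expansion in (\ref{M3a}), while by (\ref{bernoulliS}) the right-hand side equals $2\sum_{n=0}^{\infty }\mathcal{B}_{n}^{\left( S,z\right) }\left( x,y;a\right) \frac{t^{n}}{n!}$. Comparing the coefficients of $\frac{t^{n}}{n!}$ on both sides then yields the asserted identity $\mathfrak{h}_{2}\left( n,x,y,z;\overrightarrow{0},r,a,0\right) =2\mathcal{B}_{n}^{\left( S,z\right) }\left( x,y;a\right) $.

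I expect no genuine analytic obstacle here; the argument is a specialization plus coefficient comparison, exactly parallel to the cosine case treated just above. The only point requiring a moment's care is the evaluation of $M_{5}$ at $\overrightarrow{u}=\overrightarrow{0}$: one must invoke the convention that $\exp \left( \sum_{j=1}^{r}u_{j}t^{j}\right) =1$ when every $u_{j}=0$, so that $M_{5}$ reduces cleanly to $\sin \left( yt\right) $ and the specialized kernel matches the definition (\ref{bernoulliS}) verbatim. Once this reduction is recorded, the remaining coefficient extraction is routine.
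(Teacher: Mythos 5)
Your proposal is correct and follows exactly the paper's own route: specializing $b=0$, $f(t,a)=\frac{t}{a\exp(t)-1}$, and $\overrightarrow{u}=\overrightarrow{0}$ in (\ref{M3a}) so that $M_{5}$ collapses to $\sin(yt)$, identifying the result with $2F_{BS}(t,x,y;a,z)$ as in (\ref{MBS}), and then comparing coefficients of $\frac{t^{n}}{n!}$ against the expansion (\ref{bernoulliS}). No gaps; this matches the paper's argument in both structure and detail.
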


\begin{remark}
When $a=1$ and $z=1$, the polynomials $\mathfrak{h}_{2}\left( n,x,y,z;%
\overrightarrow{0},r,a,0\right) $ reduces to the polynomials $\mathcal{B}%
_{n}^{\left( S,1\right) }\left( x,y;1\right) =B_{n}^{\left( S\right) }\left(
x,y\right) $, which denote sine-Bernoulli polynomials:%
\[
\mathfrak{h}_{2}\left( n,x,y,1;\overrightarrow{0},r,1,0\right)
=2B_{n}^{\left( S\right) }\left( x,y\right) 
\]%
(\textit{cf}. \cite{KimRyoo}). Setting $y=\frac{\pi }{2}$ in (\ref%
{bernoulliS}), we have%
\[
\mathcal{B}_{n}^{\left( z\right) }\left( x;a\right) =\mathcal{B}_{n}^{\left(
S,z\right) }\left( x,\frac{\pi }{2};a\right) 
\]%
(\textit{cf}. \cite{SrivastavaChoi2}, \cite{Srivastava2018}).
\end{remark}

By using (\ref{MBS}), we have%
\[
\sum\limits_{n=0}^{\infty }\mathfrak{h}_{2}\left( n,x,y,z;\overrightarrow{0}%
,r,a,0\right) \frac{t^{n}}{n!}=2\sum\limits_{n=0}^{\infty }\mathcal{B}%
_{n}^{\left( z\right) }\left( x;a\right) \frac{t^{n}}{n!}\sum\limits_{n=0}^{%
\infty }\left( -1\right) ^{n}\frac{\left( yt\right) ^{2n+1}}{\left(
2n+1\right) !}.
\]%
Therefore%
\[
\sum\limits_{n=0}^{\infty }\mathfrak{h}_{2}\left( n,x,y,z;\overrightarrow{0}%
,r,a,0\right) \frac{t^{n}}{n!}=\sum\limits_{n=0}^{\infty }\left(
2\sum\limits_{j=0}^{\left[ \frac{n-1}{2}\right] }\left( -1\right) ^{j}\binom{%
n}{2j+1}y^{2j+1}\mathcal{B}_{n-1-2j}^{\left( z\right) }\left( x;a\right)
\right) \frac{t^{n}}{n!}
\]

\begin{corollary}
Let $r$-tuples $\overrightarrow{0}=(0,0,...,0)$. Then we have%
\[
\mathfrak{h}_{2}\left( n,x,y,z;\overrightarrow{0},r,a,0\right)
=2\sum\limits_{j=0}^{\left[ \frac{n-1}{2}\right] }\left( -1\right) ^{j}%
\binom{n}{2j+1}y^{2j+1}\mathcal{B}_{n-1-2j}^{\left( z\right) }\left(
x;a\right) .
\]
\end{corollary}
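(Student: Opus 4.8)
The plan is to exploit the factorization recorded in equation (\ref{MBS}), which identifies the generating function $B_{1}(t,x,y,z,\overrightarrow{0},r,a,0)$ with $2F_{BS}(t,x,y;a,z)$, and then to expand $F_{BS}$ as a Cauchy product of two known series. First I would invoke the definition (\ref{bernoulliS}) to write
\[
F_{BS}(t,x,y;a,z)=\left(\frac{t}{a\exp(t)-1}\right)^{z}\exp(xt)\sin(yt),
\]
and recognize the first two factors as the generating function (\ref{ApostolB}) for the Apostol-Bernoulli polynomials of order $z$ with $\lambda=a$, namely $\left(\frac{t}{a\exp(t)-1}\right)^{z}\exp(xt)=\sum_{n=0}^{\infty}\mathcal{B}_{n}^{(z)}(x;a)\frac{t^{n}}{n!}$. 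The remaining factor $\sin(yt)$ I would expand through its Taylor series $\sin(yt)=\sum_{m=0}^{\infty}(-1)^{m}\frac{(yt)^{2m+1}}{(2m+1)!}$.

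Next I would multiply the two exponential generating functions and collect the coefficient of $\frac{t^{n}}{n!}$. Reading the sine series in exponential-generating-function form, its coefficient of $\frac{t^{k}}{k!}$ vanishes for even $k$ and equals $(-1)^{j}y^{2j+1}$ at the odd index $k=2j+1$. Hence the Cauchy product produces a binomial convolution in which the factor coming from $\sin(yt)$ contributes only at the odd positions $k=2j+1$; this collapses the sum to $\sum_{j=0}^{\lfloor(n-1)/2\rfloor}(-1)^{j}\binom{n}{2j+1}y^{2j+1}\mathcal{B}_{n-1-2j}^{(z)}(x;a)$, the upper limit $\lfloor(n-1)/2\rfloor$ being forced by the requirement $2j+1\le n$ (equivalently $n-1-2j\ge 0$).

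Finally, since (\ref{MBS}) together with the defining series (\ref{M3a}) identifies $\mathfrak{h}_{2}(n,x,y,z;\overrightarrow{0},r,a,0)$ as the coefficient of $\frac{t^{n}}{n!}$ in $2F_{BS}(t,x,y;a,z)$, comparing the coefficients of $\frac{t^{n}}{n!}$ on both sides produces the overall factor of $2$ and the asserted closed form. The only genuine bookkeeping obstacle is the re-indexing step: one must keep track that the sine series is supported on odd indices and faithfully translate the constraint $2j+1\le n$ into the summation bound $\lfloor(n-1)/2\rfloor$, all while respecting the $\frac{t^{n}}{n!}$-normalization so that the binomial coefficients $\binom{n}{2j+1}$ appear correctly. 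Everything beyond this is routine comparison of coefficients, entirely parallel to the cosine case already treated in the preceding corollaries.
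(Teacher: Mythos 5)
Your proposal is correct and follows essentially the same route as the paper: both start from the identification $B_{1}\left( t,x,y,z,\overrightarrow{0},r,a,0\right) =2F_{BS}\left( t,x,y;a,z\right) $ in (\ref{MBS}), factor $F_{BS}$ into the Apostol-Bernoulli generating function (\ref{ApostolB}) with $\lambda =a$ times the Taylor series of $\sin \left( yt\right) $, and then extract the coefficient of $\frac{t^{n}}{n!}$ from the resulting Cauchy product, with the odd-index support of the sine series forcing the bound $\left[ \frac{n-1}{2}\right] $. Your bookkeeping of the re-indexing and of the factor $2$ matches the paper's computation exactly.
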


We modify (\ref{MBS}) as follows:%
\begin{equation}
\sum\limits_{n=0}^{\infty }\mathfrak{h}_{2}\left( n,x,y,z;\overrightarrow{0}%
,r,-a,0\right) \frac{t^{n}}{n!}=\frac{\left( -1\right) ^{z}t^{z}}{2^{z-1}}%
\sum\limits_{n=0}^{\infty }\mathcal{E}_{n}^{\left( S,z\right) }\left(
x,y;a\right) \frac{t^{n}}{n!}  \label{MES}
\end{equation}%
where $\mathcal{E}_{n}^{\left( S,z\right) }\left( x,y;a\right) $ denote the
two parametric kinds of Apostol-Euler polynomials of order $z$, which are
defined by the following generating function:%
\begin{equation}
\left( \frac{2}{a\exp \left( t\right) +1}\right) ^{z}\exp \left( xt\right)
\sin \left( yt\right) =\sum\limits_{n=0}^{\infty }\mathcal{E}_{n}^{\left(
S,z\right) }\left( x,y;a\right) \frac{t^{n}}{n!}  \label{EulerS}
\end{equation}%
(\textit{cf}. \cite{kizilates}).

By using (\ref{MES}), we get the following result:

\begin{corollary}
Let $r$-tuples $\overrightarrow{0}=(0,0,...,0)$. Then we have%
\begin{equation}
\mathfrak{h}_{2}\left( n,x,y,z;\overrightarrow{0},r,-a,0\right) =\frac{%
\left( -1\right) ^{z}\left( n\right) ^{\underline{z}}}{2^{z-1}}\mathcal{E}%
_{n-z}^{\left( S,z\right) }\left( x,y;a\right) .  \label{MES1}
\end{equation}
\end{corollary}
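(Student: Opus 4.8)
The plan is to read off the identity directly from the generating-function relation (\ref{MES}) by a single index shift followed by coefficient comparison, exactly in the spirit of the preceding corollaries. The only genuine manipulation is absorbing the prefactor $t^{z}$ on the right-hand side into the power series.

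First I would rewrite the right-hand side of (\ref{MES}) by pulling $t^{z}$ inside the summation:
\[
\frac{(-1)^{z}t^{z}}{2^{z-1}}\sum_{m=0}^{\infty}\mathcal{E}_{m}^{(S,z)}(x,y;a)\frac{t^{m}}{m!}=\frac{(-1)^{z}}{2^{z-1}}\sum_{m=0}^{\infty}\mathcal{E}_{m}^{(S,z)}(x,y;a)\frac{t^{m+z}}{m!}.
\]
Then I would substitute $n=m+z$ (so that $m=n-z$ and the sum effectively runs over $n\geq z$), obtaining a series in $t^{n}$ whose coefficient is $\frac{(-1)^{z}}{2^{z-1}}\frac{\mathcal{E}_{n-z}^{(S,z)}(x,y;a)}{(n-z)!}$.

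Next, to match the normalization $t^{n}/n!$ used on the left-hand side of (\ref{MES}), I would multiply and divide by $n!$, writing $\frac{1}{(n-z)!}=\frac{1}{n!}\cdot\frac{n!}{(n-z)!}$ and recognizing $\frac{n!}{(n-z)!}=n(n-1)\cdots(n-z+1)=(n)^{\underline{z}}$, where the last equality uses the falling-factorial notation $(\alpha)^{\underline{v}}=(-1)^{v}(-\alpha)_{v}$ introduced in the Introduction. This rewrites the right-hand side of (\ref{MES}) as
\[
\sum_{n=0}^{\infty}\frac{(-1)^{z}(n)^{\underline{z}}}{2^{z-1}}\mathcal{E}_{n-z}^{(S,z)}(x,y;a)\frac{t^{n}}{n!}.
\]

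Finally I would compare the coefficients of $t^{n}/n!$ with the left-hand defining series $\sum_{n}\mathfrak{h}_{2}(n,x,y,z;\overrightarrow{0},r,-a,0)\,t^{n}/n!$ to arrive at the claimed formula (\ref{MES1}). I do not expect a real obstacle here: the argument is purely formal power-series bookkeeping, and the only point requiring a moment's care is the index shift together with the identification of $n!/(n-z)!$ as the falling factorial $(n)^{\underline{z}}$, which also accounts for the implicit convention that the right-hand side vanishes when $n<z$.
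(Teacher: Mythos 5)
Your proposal is correct and matches the paper's approach: the paper likewise obtains (\ref{MES1}) directly from (\ref{MES}) by absorbing the factor $t^{z}$ into the series and comparing coefficients of $\frac{t^{n}}{n!}$, with $\frac{n!}{(n-z)!}=(n)^{\underline{z}}$ producing the falling factorial. You merely spell out the index shift that the paper leaves implicit, including the observation that $(n)^{\underline{z}}$ vanishes for $n<z$.
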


\begin{remark}
When $a=1$ and $z=1$, the polynomials $\mathfrak{h}_{2}\left( n,x,y,z;%
\overrightarrow{0},r,-a,0\right) $ reduce to the polynomials $\mathcal{E}%
_{n-1}^{\left( S,1\right) }\left( x,y;1\right) =E_{n-1}^{\left( S\right)
}\left( x,y\right) $, which denote sine-Euler polynomials:%
\[
\mathfrak{h}_{2}\left( n,x,y,1;\overrightarrow{0},r,-1,0\right)
=-nE_{n-1}^{\left( S\right) }\left( x,y\right) 
\]%
(\textit{cf}. \cite{KimRyoo}, \cite{masjed2018}). Setting $y=\frac{\pi }{2}$
in (\ref{EulerS}), we have%
\[
\mathcal{E}_{n}^{\left( z\right) }\left( x;a\right) =\mathcal{E}_{n}^{\left(
S,z\right) }\left( x,\frac{\pi }{2};a\right) 
\]%
(\textit{cf}. \cite{SrivastavaChoi2}, \cite{Srivastava2018}).
\end{remark}

Using (\ref{MES}), we obtain%
\[
\sum\limits_{n=0}^{\infty }\mathfrak{h}_{2}\left( n,x,y,z;\overrightarrow{0}%
,r,-a,0\right) \frac{t^{n}}{n!}=\left( -t\right)
^{z}2^{1-z}\sum\limits_{n=0}^{\infty }\mathcal{E}_{n}^{\left( z\right)
}\left( x;a\right) \frac{t^{n}}{n!}\sum\limits_{n=0}^{\infty }\left(
-1\right) ^{n}\frac{\left( yt\right) ^{2n+1}}{\left( 2n+1\right) !}.
\]%
Therefore%
\begin{eqnarray*}
&&\sum\limits_{n=0}^{\infty }\mathfrak{h}_{2}\left( n,x,y,z;\overrightarrow{0%
},r,-a,0\right) \frac{t^{n}}{n!} \\
&=&\sum\limits_{n=0}^{\infty }\left( -1\right) ^{z}2^{1-z}\sum\limits_{j=0}^{
\left[ \frac{n-1}{2}\right] }\left( -1\right) ^{j}\binom{n}{2j+1}\left(
n-1-2j\right) ^{\underline{z}}y^{2j+1}\mathcal{E}_{n-1-z-2j}^{\left(
z\right) }\left( x;a\right) \frac{t^{n}}{n!}.
\end{eqnarray*}%
By using (\ref{Esh1}), we obtain the following result:

\begin{corollary}
Let $r$-tuples $\overrightarrow{0}=(0,0,...,0)$. Then we have%
\[
\mathfrak{h}_{2}\left( n,x,y,z;\overrightarrow{0},r,-a,0\right) =\left(
-1\right) ^{z}2^{1-z}\sum\limits_{j=0}^{\left[ \frac{n-1}{2}\right] }\left(
-1\right) ^{j}\binom{n}{2j+1}\left( n-1-2j\right) ^{\underline{z}}y^{2j+1}%
\mathcal{E}_{n-1-z-2j}^{\left( z\right) }\left( x;a\right) .
\]
\end{corollary}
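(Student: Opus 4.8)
The plan is to obtain the corollary by comparing coefficients in the generating-function identity already assembled in (\ref{MES}), so no new machinery is required. First I would recall (\ref{MES}), which writes the exponential generating function of $\mathfrak{h}_{2}\left(n,x,y,z;\overrightarrow{0},r,-a,0\right)$ as
\[
\frac{\left(-1\right)^{z}t^{z}}{2^{z-1}}\sum_{n=0}^{\infty}\mathcal{E}_{n}^{\left(S,z\right)}\left(x,y;a\right)\frac{t^{n}}{n!},
\]
and then factor the two-parametric Apostol-Euler polynomials through their defining relation (\ref{EulerS}). Splitting the right-hand side of (\ref{EulerS}) into the product of $\left(\frac{2}{a\exp\left(t\right)+1}\right)^{z}\exp\left(xt\right)$, which by (\ref{ApostolE}) equals $\sum_{n}\mathcal{E}_{n}^{\left(z\right)}\left(x;a\right)t^{n}/n!$, and the Taylor expansion $\sin\left(yt\right)=\sum_{m}\left(-1\right)^{m}\left(yt\right)^{2m+1}/\left(2m+1\right)!$, I recover exactly the three-factor product displayed immediately above the corollary.

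The core step is then a Cauchy product. I would multiply the exponential series $\sum_{n}\mathcal{E}_{n}^{\left(z\right)}\left(x;a\right)t^{n}/n!$ against the sine series, letting the total power of $t$ be controlled by the three contributions: $n$ from the Apostol-Euler factor, $2m+1$ from the sine factor, and the extra $z$ carried by the prefactor $t^{z}$. Collecting the coefficient of $t^{n}$ (equivalently of $t^{n}/n!$) forces the Apostol-Euler index to be $n-1-z-2j$ when the sine index equals $j$, and produces the factorial weight $\frac{n!}{\left(2j+1\right)!\left(n-1-z-2j\right)!}$. The summation range collapses to $0\leq j\leq\left[\frac{n-1}{2}\right]$ precisely because both $n-1-z-2j\geq 0$ and $2j+1\leq n$ must hold.

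The only delicate point, and the step I expect to need the most care, is rewriting this factorial weight in the stated closed form. Using the definition of the binomial coefficient together with the falling factorial $\left(\alpha\right)^{\underline{v}}=\left(-1\right)^{v}\left(-\alpha\right)_{v}$ from the introduction, one checks that
\[
\frac{n!}{\left(2j+1\right)!\left(n-1-z-2j\right)!}=\binom{n}{2j+1}\left(n-1-2j\right)^{\underline{z}},
\]
since $\left(n-1-2j\right)^{\underline{z}}=\left(n-1-2j\right)!/\left(n-1-z-2j\right)!$. Substituting this identity and combining the signs $\left(-1\right)^{z}$ (from $t^{z}$) and $\left(-1\right)^{j}$ (from the sine series) yields precisely the asserted formula. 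No analytic subtleties arise: every series involved is one of the established generating functions of the excerpt, the formal manipulations are justified on the common disk of convergence noted for (\ref{ApostolE}), and $z$ is implicitly taken to be a nonnegative integer so that the shift by $t^{z}$ and the falling-factorial rewriting are meaningful.
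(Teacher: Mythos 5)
Your proposal is correct and takes essentially the same route as the paper: it starts from (\ref{MES}), factors $\mathcal{E}_{n}^{\left( S,z\right) }\left( x,y;a\right) $ through (\ref{EulerS}) into the order-$z$ Apostol-Euler generating function (\ref{ApostolE}) times the Maclaurin series of $\sin \left( yt\right) $, carries out the Cauchy product with the $t^{z}$ shift, and converts the factorial weight $\frac{n!}{\left( 2j+1\right) !\left( n-1-z-2j\right) !}$ into $\binom{n}{2j+1}\left( n-1-2j\right) ^{\underline{z}}$, exactly as the paper does. If anything, your write-up is slightly more careful than the paper's, since you make explicit the falling-factorial rewriting and the implicit assumption that $z$ is a nonnegative integer.
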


\section{Relations among the polynomials $\mathcal{K}\left( n;w,\protect%
\overrightarrow{u},r\right) $, trigonometric functions and hypergeometric
function}

In this section, we study the following two variable polynomials%
\begin{equation}
N_{n}\left( w\right) =\mathcal{K}\left( n;w,\overrightarrow{0},r\right) 
\label{Nn77}
\end{equation}%
where $r$-tuples $\overrightarrow{0}=(0,0,\ldots ,0)$, the polynomials $%
\mathcal{K}\left( n;w,\overrightarrow{0},r\right) $ are given in equation (%
\ref{g1}). We set $N_{n}\left( w\right) =N_{n}(\left( x,y\right) )$. We
investigate some properties of the\ polynomials $N_{n}\left( w\right) $. We
give relations among the polynomials $N_{n}\left( w\right) $, trigonometric
functions and hypergeometric functions. The polynomials $N_{n}\left(
w\right) $ are also related to other special polynomials, such as the
Milne-Thomson-type polynomials and the generalized Hermite-Kamp\`{e} de F%
\`{e}riet polynomials.

A series representation of the polynomials $N_{n}\left( w\right) $ is given
by%
\begin{equation}
G(t,w)=\sum\limits_{n=0}^{\infty }N_{n}\left( w\right) \frac{t^{n}}{n!}=\exp
\left( wt\right) .  \label{Nw}
\end{equation}%
Alternative forms of the above generating functions are given as follows:%
\begin{equation}
G(t,w)=\text{ }_{0}F_{0}\left[ 
\begin{array}{c}
- \\ 
-%
\end{array}%
;wt\right] ,  \label{nny}
\end{equation}%
\begin{equation}
G(t,w)=\text{ }_{0}F_{0}\left[ 
\begin{array}{c}
- \\ 
-%
\end{array}%
;xt\right] \left\{ _{0}F_{1}\left[ 
\begin{array}{c}
- \\ 
\frac{1}{2}%
\end{array}%
;\frac{-y^{2}t^{2}}{4}\right] +iyt\text{ }_{0}F_{1}\left[ 
\begin{array}{c}
- \\ 
\frac{3}{2}%
\end{array}%
;\frac{-y^{2}t^{2}}{4}\right] \right\} ,  \label{YN}
\end{equation}%
and%
\[
N_{m}\left( w\right) =\frac{\partial ^{m}}{\partial t^{m}}\left\{ \text{ }%
_{0}F_{0}\left[ 
\begin{array}{c}
- \\ 
-%
\end{array}%
;wt\right] \right\} \mid _{t=0}, 
\]%
where $_{p}F_{q}\left[ 
\begin{array}{c}
\alpha _{1},\ldots ,\alpha _{p} \\ 
\beta _{1},\ldots ,\beta _{q}%
\end{array}%
;z\right] $ denotes hypergeometric function, defined by%
\begin{equation}
_{p}F_{q}\left[ 
\begin{array}{c}
\alpha _{1},\ldots ,\alpha _{p} \\ 
\beta _{1},\ldots ,\beta _{q}%
\end{array}%
;z\right] =\sum\limits_{m=0}^{\infty }\left( \frac{\prod\limits_{j=1}^{p}%
\left( \alpha _{j}\right) _{m}}{\prod\limits_{j=1}^{q}\left( \beta
_{j}\right) _{m}}\right) \frac{z^{m}}{m!}.  \label{1Y3}
\end{equation}

A series in (\ref{1Y3}) converges for all $z$ if $p<q+1$, and for $%
\left\vert z\right\vert <1$ if $p=q+1$ and also all $\beta _{j}$, $\left(
j=1,2,...,q\right) $ are real or complex parameters with $\beta _{j}\notin 
\mathbb{N}$ (\textit{cf.} \cite{Bailey}, \cite{Choi}, \cite{JohnPearson}, 
\cite{Tremblay}, \cite{Srivastava2}).

By using the above generating functions, we obtain the following well-known
identity:%
\begin{equation}
N_{n}\left( w\right) =\sum\limits_{j=0}^{n}\binom{n}{j}x^{n-j}(iy)^{j}=%
\left( x+iy\right) ^{n}.  \label{Nxy}
\end{equation}%
Replacing $w$ by $\overline{w}$, we modify (\ref{Nxy}), we have%
\begin{equation}
N_{n}\left( \overline{w}\right) =\sum\limits_{j=0}^{n}\binom{n}{j}%
x^{n-j}(-iy)^{j}=\left( x-iy\right) ^{n}.  \label{Nxya}
\end{equation}%
Observe that%
\[
N_{n}\left( w\right) N_{n}\left( \overline{w}\right) =\left\vert
w\right\vert ^{2n}. 
\]

By using the Riemann integral, we derive some identities and formulas
including the polynomials $N_{n}\left( w\right) $, the Bernoulli numbers and
other special polynomials.

\begin{theorem}
Let $n\in \mathbb{N}$ and $w=x+iy$. Then we have%
\begin{equation}
N_{n-1}\left( w\right) =\frac{\overline{w}}{x^{2}+y^{2}}C_{n}\left(
x,y\right) +\frac{iw+2y}{x^{2}+y^{2}}S_{n}\left( x,y\right) .  \label{1Y4a}
\end{equation}
\end{theorem}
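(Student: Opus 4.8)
The plan is to reduce everything to the complex power functions $w^n$ and $\overline{w}^n$, in which the index shift becomes transparent and the seemingly awkward coefficients collapse. First I would record the two facts already established in the excerpt: by (\ref{Nxy}) and (\ref{Nxya}) we have $N_n(w)=(x+iy)^n=w^n$ and $N_n(\overline{w})=(x-iy)^n=\overline{w}^n$. Next I would extract closed forms for $C_n$ and $S_n$ in terms of $w$ and $\overline{w}$. Combining the generating functions (\ref{6a}) and (\ref{6b}) through Euler's formula gives
\[
\sum_{n=0}^{\infty}\bigl(C_n(x,y)+iS_n(x,y)\bigr)\frac{t^n}{n!}=\exp(xt)\bigl(\cos(yt)+i\sin(yt)\bigr)=\exp(wt),
\]
so comparing coefficients of $t^n/n!$ yields $C_n(x,y)+iS_n(x,y)=w^n$. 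Since $C_n(x,y)$ and $S_n(x,y)$ are real for real $x,y$, taking complex conjugates gives $C_n(x,y)-iS_n(x,y)=\overline{w}^n$. Hence $C_n(x,y)=\frac{1}{2}(w^n+\overline{w}^n)$ and $S_n(x,y)=\frac{1}{2i}(w^n-\overline{w}^n)$.

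The second step is a short algebraic normalization of the coefficients appearing in (\ref{1Y4a}). I would record $x^2+y^2=|w|^2=w\overline{w}$, and then isolate the one non-obvious identity, namely $iw+2y=i\overline{w}$. This follows from $w-\overline{w}=2iy$, i.e.\ $2y=-i(w-\overline{w})$, so that $iw+2y=iw-i(w-\overline{w})=i\overline{w}$. This is the key observation: it is exactly what turns the messy factor $\frac{iw+2y}{x^2+y^2}$ into something that pairs cleanly with $S_n$.

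With these in hand I would substitute into the right-hand side of (\ref{1Y4a}). Using $x^2+y^2=w\overline{w}$, the first term becomes $\frac{\overline{w}}{w\overline{w}}\cdot\frac{w^n+\overline{w}^n}{2}=\frac{w^n+\overline{w}^n}{2w}$, and using $iw+2y=i\overline{w}$ the second term becomes $\frac{i\overline{w}}{w\overline{w}}\cdot\frac{w^n-\overline{w}^n}{2i}=\frac{w^n-\overline{w}^n}{2w}$. Adding the two, the $\overline{w}^n$ contributions cancel and the $w^n$ contributions combine to $\frac{2w^n}{2w}=w^{n-1}=N_{n-1}(w)$, which is precisely the left-hand side. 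I expect no genuine obstacle: the entire proof is a finite computation once the reductions $C_n+iS_n=w^n$ and $iw+2y=i\overline{w}$ are in place. The only care needed is the sign bookkeeping in that last identity and the observation that dividing by $w$ (equivalently, the factor $\overline{w}/(x^2+y^2)=1/w$) is exactly what drops the index from $n$ to $n-1$. As a sanity check I would verify the cases $n=1$ and $n=2$, where the right-hand side collapses to $1=N_0(w)$ and $w=N_1(w)$, respectively.
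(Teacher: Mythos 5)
Your proof is correct, but it takes a genuinely different route from the paper's. The paper proves (\ref{1Y4a}) by integrating the generating function (\ref{Nw}): it integrates $\exp \left( wv\right) =e^{xv}\cos \left( yv\right) +ie^{xv}\sin \left( yv\right) $ from $0$ to $t$ with respect to $v$, evaluates the Riemann integrals $\int_{0}^{t}e^{xv}\cos \left( yv\right) dv$ and $\int_{0}^{t}e^{xv}\sin \left( yv\right) dv$ in closed form (this is precisely where the factors $\frac{\overline{w}}{x^{2}+y^{2}}$ and $\frac{iw+2y}{x^{2}+y^{2}}$ arise), and then compares coefficients of $\frac{t^{n}}{n!}$ against (\ref{6a}) and (\ref{6b}). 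You instead work entirely at the level of coefficients: from $N_{n}\left( w\right) =w^{n}$, $C_{n}\left( x,y\right) =\frac{1}{2}\left( w^{n}+\overline{w}^{n}\right) $, $S_{n}\left( x,y\right) =\frac{1}{2i}\left( w^{n}-\overline{w}^{n}\right) $, together with the normalizations $x^{2}+y^{2}=w\overline{w}$ and $iw+2y=i\overline{w}$, the right-hand side collapses to $\frac{C_{n}\left( x,y\right) +iS_{n}\left( x,y\right) }{w}=\frac{w^{n}}{w}=w^{n-1}$. Your argument is the more elementary and more transparent of the two: it exposes that the two coefficients in (\ref{1Y4a}) are simply $\frac{1}{w}$ and $\frac{i}{w}$ in disguise, so the theorem is nothing but the statement $N_{n-1}\left( w\right) =N_{n}\left( w\right) /w$, valid since the presence of $x^{2}+y^{2}$ in the denominators already presupposes $w\neq 0$. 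What the paper's integral method buys is continuity with the rest of the section: the same representation $\frac{\exp \left( wt\right) -1}{w}$ obtained from the integration step is immediately reused in the following theorem to connect $C_{n}\left( x,y\right) $ and $S_{n}\left( x,y\right) $ with the Bernoulli numbers $B_{n-1}^{(-1)}$ of order $-1$, leading to (\ref{1Y4b}), (\ref{1Y4c}) and (\ref{1Y4d}); your coefficient-level computation, while cleaner for (\ref{1Y4a}) itself, does not produce that link on its own.
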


\begin{proof}
Integrating both sides of equation (\ref{Nw}) from $0$ to $t$ with respect
to the variable $v$, we get%
\begin{equation}
\sum\limits_{n=0}^{\infty }N_{n}\left( w\right) \int\limits_{0}^{t}\frac{%
v^{n}}{n!}dv=\int\limits_{0}^{t}e^{xv}\cos \left( yv\right)
dv+i\int\limits_{0}^{t}e^{xv}\sin \left( yv\right) dv.  \label{1Y4}
\end{equation}%
After some elementary calculations in the above equation, then combining
with (\ref{6a}) and (\ref{6b}), respectively, we obtain%
\[
\sum\limits_{n=1}^{\infty }N_{n-1}\left( w\right) \frac{t^{n}}{n!}=\frac{%
\overline{w}}{x^{2}+y^{2}}\sum\limits_{n=1}^{\infty }C_{n}\left( x,y\right) 
\frac{t^{n}}{n!}+\frac{iw+2y}{x^{2}+y^{2}}\sum\limits_{n=1}^{\infty
}S_{n}\left( x,y\right) \frac{t^{n}}{n!}. 
\]%
Comparing the coefficients of $\frac{t^{n}}{n!}$ on both sides of the above
equation, we arrive at the desired result.
\end{proof}

\begin{theorem}
Let $n\in \mathbb{N}$ and $w=x+iy$. Then we have%
\begin{equation}
B_{n-1}^{(-1)}=\frac{w^{1-n}}{\left( x^{2}+y^{2}\right) n}\left( \overline{w}%
C_{n}\left( x,y\right) +\left( iw+2y\right) S_{n}\left( x,y\right) \right) .
\label{1Y4b}
\end{equation}
\end{theorem}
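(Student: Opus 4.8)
The plan is to derive (\ref{1Y4b}) as an immediate rescaling of the preceding theorem (\ref{1Y4a}), once the Bernoulli number of order $-1$ is identified in closed form. First I would recall from (\ref{Nxy}) that $N_{n-1}\left( w\right) =w^{n-1}$, so that (\ref{1Y4a}) may be rewritten as
\[
w^{n-1}=\frac{\overline{w}C_{n}\left( x,y\right) +\left( iw+2y\right) S_{n}\left( x,y\right) }{x^{2}+y^{2}}.
\]

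The key computational ingredient is to evaluate $B_{n-1}^{(-1)}$ explicitly. Setting $\lambda =1$, $x=0$ and $k=-1$ in the Apostol-Bernoulli generating function (\ref{ApostolB}) yields
\[
\left( \frac{t}{\exp \left( t\right) -1}\right) ^{-1}=\frac{\exp \left( t\right) -1}{t}=\sum\limits_{n=0}^{\infty }\frac{t^{n}}{\left( n+1\right) !}=\sum\limits_{n=0}^{\infty }B_{n}^{(-1)}\frac{t^{n}}{n!},
\]
and comparing the coefficients of $\frac{t^{n}}{n!}$ gives $B_{n}^{(-1)}=\frac{1}{n+1}$, hence $B_{n-1}^{(-1)}=\frac{1}{n}$.

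It then remains to multiply the displayed reformulation of (\ref{1Y4a}) by $\frac{w^{1-n}}{n}$. Since $w^{1-n}\cdot w^{n-1}=1$, the right-hand side collapses to $\frac{1}{n}$, which coincides exactly with $B_{n-1}^{(-1)}$; this is precisely the claimed identity (\ref{1Y4b}). Comparing coefficients of $\frac{t^{n}}{n!}$ after the substitution completes the argument.

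I expect no serious obstacle here: the only non-routine step is recognizing that $B_{n-1}^{(-1)}=\frac{1}{n}$, and once this is verified the result is a one-line algebraic consequence of the previous theorem together with (\ref{Nxy}). I would nonetheless note that the statement implicitly assumes $w\neq 0$, equivalently $x^{2}+y^{2}>0$, so that both the factor $w^{1-n}$ and the division by $x^{2}+y^{2}$ are meaningful.
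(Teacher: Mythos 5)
Your argument is correct, but it takes a genuinely different route from the paper's. The paper proves (\ref{1Y4b}) by generating-function manipulation with $B_{n-1}^{(-1)}$ kept symbolic throughout: starting from the integral identity (\ref{1Y4}) used in the proof of (\ref{1Y4a}), it writes the integrated generating function as $\frac{\exp (wt)-1}{w}$, expands this via (\ref{ApostolB}) with $k=-1$, $\lambda =1$ at argument $wt$ to get $\frac{\exp (wt)-1}{w}=\sum_{n=1}^{\infty }nw^{n-1}B_{n-1}^{(-1)}\frac{t^{n}}{n!}$, equates this with the $C_{n}$/$S_{n}$ series, and compares coefficients of $\frac{t^{n}}{n!}$. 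You instead take the statement of (\ref{1Y4a}) as a black box, note $N_{n-1}\left( w\right) =w^{n-1}$ from (\ref{Nxy}), compute the closed form $B_{n-1}^{(-1)}=\frac{1}{n}$ directly from (\ref{ApostolB}), and observe that (\ref{1Y4b}) is exactly (\ref{1Y4a}) multiplied by $\frac{w^{1-n}}{n}$. Your evaluation of $B_{n}^{(-1)}=\frac{1}{n+1}$ is obtained independently of the theorem being proved, so there is no circularity, and your remark that $w\neq 0$ is implicitly required is well taken (the paper needs this too, since it divides by $x^{2}+y^{2}$ and by $w$). The trade-off is this: your proof is shorter and makes transparent that the theorem is a mere rescaling of its predecessor, whereas the paper's proof deliberately avoids invoking the value of $B_{n-1}^{(-1)}$, and that is precisely what lets it deduce the relation $N_{n-1}\left( w\right) =nw^{n-1}B_{n-1}^{(-1)}$ in (\ref{1Y4c}) and then the closed form $B_{n}^{\left( -1\right) }=\frac{1}{n+1}$ in (\ref{1Y4d}) as genuine corollaries; if your proof were adopted, those corollaries would become restatements of facts already used as inputs, so the logical organization of the surrounding section would have to change.
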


\begin{proof}
Using (\ref{1Y4}) and (\ref{Nw}), we get%
\[
\frac{G(t,w)-1}{w}=\frac{\exp (wt)-1}{w}. 
\]%
Therefore%
\[
\frac{\exp (wt)-1}{w}=\frac{\overline{w}}{x^{2}+y^{2}}\sum\limits_{n=1}^{%
\infty }C_{n}\left( x,y\right) \frac{t^{n}}{n!}+\frac{iw+2y}{x^{2}+y^{2}}%
\sum\limits_{n=1}^{\infty }S_{n}\left( x,y\right) \frac{t^{n}}{n!}. 
\]%
Combining the above equation with (\ref{ApostolB}), we obtain%
\[
\sum\limits_{n=1}^{\infty }nw^{n-1}B_{n-1}^{(-1)}\frac{t^{n}}{n!}=\frac{%
\overline{w}}{x^{2}+y^{2}}\sum\limits_{n=1}^{\infty }C_{n}\left( x,y\right) 
\frac{t^{n}}{n!}+\frac{iw+2y}{x^{2}+y^{2}}\sum\limits_{n=1}^{\infty
}S_{n}\left( x,y\right) \frac{t^{n}}{n!}. 
\]%
Comparing the coefficients of $\frac{t^{n}}{n!}$ on both sides of the above
equation, we arrive at the desired result.
\end{proof}

Combining (\ref{1Y4a}) with (\ref{1Y4b}), we arrive at the following
corollary:

\begin{corollary}
Let $n\in \mathbb{N}$ and $w=x+iy$. Then we have%
\begin{equation}
N_{n-1}\left( w\right) =nw^{n-1}B_{n-1}^{(-1)}.  \label{1Y4c}
\end{equation}
\end{corollary}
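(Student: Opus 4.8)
The plan is to eliminate the common factor shared by the two preceding theorems rather than to recompute anything from the generating functions. Observe that both~(\ref{1Y4a}) and~(\ref{1Y4b}) are assembled from the single expression
\[
E := \overline{w}\,C_{n}(x,y) + (iw+2y)\,S_{n}(x,y),
\]
which is exactly the combination produced on the right-hand side of the integrated generating function~(\ref{1Y4}). Indeed, (\ref{1Y4a}) reads $N_{n-1}(w) = E/(x^{2}+y^{2})$, whereas~(\ref{1Y4b}) reads $B_{n-1}^{(-1)} = w^{1-n}E/\bigl((x^{2}+y^{2})\,n\bigr)$. Since the factor $E$ is literally the same in both formulas, the corollary should follow by solving one identity for $E$ and substituting it into the other.

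Concretely, first I would rearrange~(\ref{1Y4a}) to isolate the common factor, obtaining $E = (x^{2}+y^{2})\,N_{n-1}(w)$. Next I would insert this into~(\ref{1Y4b}), so that
\[
B_{n-1}^{(-1)} = \frac{w^{1-n}}{(x^{2}+y^{2})\,n}\cdot (x^{2}+y^{2})\,N_{n-1}(w) = \frac{w^{1-n}}{n}\,N_{n-1}(w).
\]
The factor $x^{2}+y^{2}=|w|^{2}$ cancels exactly, which is the only place where any care is needed; it is nonzero precisely under the hypothesis $w=x+iy$ together with the principal-branch convention $|w|>0$ fixed in the introduction. Finally, multiplying both sides by $nw^{n-1}$ yields $nw^{n-1}B_{n-1}^{(-1)} = N_{n-1}(w)$, which is~(\ref{1Y4c}).

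I do not anticipate a genuine obstacle: the argument is pure algebraic elimination of $E$, since the two theorems have already carried out the analytic work of integrating~(\ref{Nw}) and matching it against the Apostol-Bernoulli generating function~(\ref{ApostolB}) in the order-$(-1)$ case. As a consistency check one may note that $(e^{t}-1)/t = \sum_{n\ge 0} t^{n}/(n+1)!$ forces $B_{n-1}^{(-1)} = 1/n$, so~(\ref{1Y4c}) collapses to $N_{n-1}(w) = w^{n-1}$, in agreement with~(\ref{Nxy}). This confirms that no sign or index shift has been lost in the elimination.
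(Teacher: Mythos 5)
Your proposal is correct and follows essentially the same route as the paper, which simply states that the corollary is obtained by combining (\ref{1Y4a}) with (\ref{1Y4b}); your explicit elimination of the common combination $\overline{w}\,C_{n}(x,y)+(iw+2y)\,S_{n}(x,y)$ is exactly that combination step spelled out. The cancellation of $x^{2}+y^{2}=|w|^{2}$ and the consistency check against (\ref{Nxy}) and (\ref{1Y4d}) are sound.
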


By using (\ref{Nw}), we get the following well-known identity for the
numbers $B_{n}^{(-1)}$:

\begin{corollary}
Let $n\in \mathbb{N}_{0}$. Then we have%
\begin{equation}
B_{n}^{\left( -1\right) }=\frac{1}{n+1}.  \label{1Y4d}
\end{equation}
\end{corollary}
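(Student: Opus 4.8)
The plan is to read off $B_n^{(-1)}$ by matching two expressions for the same quantity $N_{n-1}(w)$. First I would observe that the generating function (\ref{Nw}) is nothing but the exponential series $\exp(wt)=\sum_{n=0}^{\infty}w^{n}t^{n}/n!$, so comparing the coefficients of $t^{n}/n!$ gives $N_{n}(w)=w^{n}$; this is precisely the identity (\ref{Nxy}). In particular $N_{n-1}(w)=w^{n-1}$ for every $n\in\mathbb{N}$.

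Next I would invoke Corollary (\ref{1Y4c}), established just above, which states $N_{n-1}(w)=n\,w^{n-1}B_{n-1}^{(-1)}$. Setting the two expressions for $N_{n-1}(w)$ equal yields the identity $w^{n-1}=n\,w^{n-1}B_{n-1}^{(-1)}$, valid for all $w=x+iy$. Since this holds identically in $w$, I may cancel the common factor $w^{n-1}$ (for any $w\neq 0$), which leaves $1=n\,B_{n-1}^{(-1)}$, hence $B_{n-1}^{(-1)}=1/n$. Replacing $n$ by $n+1$ gives the claimed formula $B_{n}^{(-1)}=1/(n+1)$ for all $n\in\mathbb{N}_{0}$.

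Alternatively, one can argue directly from the defining generating function (\ref{ApostolB}): with $k=-1$, $\lambda=1$ and $x=0$ it reads $\left(t/(\exp(t)-1)\right)^{-1}=(\exp(t)-1)/t=\sum_{n=0}^{\infty}B_{n}^{(-1)}t^{n}/n!$. Writing $\exp(t)=\sum_{m=0}^{\infty}t^{m}/m!$, which is (\ref{Nw}) evaluated at $w=1$, one obtains $(\exp(t)-1)/t=\sum_{n=0}^{\infty}t^{n}/(n+1)!$; matching the coefficients of $t^{n}/n!$ again gives $B_{n}^{(-1)}=n!/(n+1)!=1/(n+1)$.

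There is no real obstacle here: the argument is a one-line coefficient comparison on top of results already in hand. The only two points deserving a word of care are the legitimacy of cancelling $w^{n-1}$ (justified because the identity is a polynomial identity holding for all $w$, not merely at a single value), and the index shift from $n-1$ to $n$, which converts the range $n\in\mathbb{N}$ of (\ref{1Y4c}) into the range $n\in\mathbb{N}_{0}$ required by the corollary.
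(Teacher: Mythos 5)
Your main argument is exactly the paper's: the paper obtains (\ref{1Y4d}) precisely by feeding $N_{n}\left( w\right) =w^{n}$, read off from (\ref{Nw}) (equivalently (\ref{Nxy})), into the relation $N_{n-1}\left( w\right) =nw^{n-1}B_{n-1}^{(-1)}$ of (\ref{1Y4c}), and your cancellation of $w^{n-1}$ and the index shift from $n\in \mathbb{N}$ to $n\in \mathbb{N}_{0}$ are handled correctly. Your alternative computation directly from (\ref{ApostolB}), expanding $\left( \exp \left( t\right) -1\right) /t=\sum_{n=0}^{\infty }t^{n}/\left( n+1\right) !$, is also valid and is in the same spirit as the paper's subsequent remark, which notes yet another route via the formula $B_{n}^{\left( -k\right) }=S_{2}(n+k,k)/\binom{n+k}{k}$ with $k=1$.
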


\begin{remark}
In work of Srivastava (\textit{cf}. \cite[Eq. (7.17)]{Srivastava}), we have
the following well-known formula including the Stirling numbers of the
second kind and the Bernoulli numbers of order $-k$:%
\[
B_{n}^{\left( -k\right) }=\frac{1}{\binom{n+k}{k}}S_{2}(n+k,k). 
\]%
Substituting $k=1$ into the above formula, since $S_{2}(n+1,1)=1$, we also
arrive at (\ref{1Y4d}).
\end{remark}

The polynomials $\mathcal{K}\left( n;w,\overrightarrow{u},r\right) $ are
linear combinations of the polynomials $N_{n}\left( w\right) $, presented by
the following theorem.

\begin{theorem}
Let $\overrightarrow{u}=\left( u_{1},u_{2},u_{3},\ldots ,u_{r}\right) $ and $%
w=x+iy$. Then we have%
\begin{equation}
\mathcal{K}\left( n;w,\overrightarrow{u},r\right) =\sum\limits_{j=0}^{n}%
\binom{n}{j}H_{j}\left( \overrightarrow{u},r\right) N_{n-j}\left( w\right) .
\label{C1}
\end{equation}
\end{theorem}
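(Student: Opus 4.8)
The plan is to exploit the additive separation of the exponent in the generating function (\ref{g1}) for $\mathcal{K}\left(n;w,\overrightarrow{u},r\right)$. First I would write the exponent $wt+\sum_{j=1}^{r}u_{j}t^{j}$ as the sum of the two pieces $wt$ and $\sum_{j=1}^{r}u_{j}t^{j}$, and then apply the elementary identity $\exp(A+B)=\exp(A)\exp(B)$ to factor the generating function as
\[
\mathcal{G}\left(t,w,\overrightarrow{u},r\right)=\exp(wt)\exp\left(\sum_{j=1}^{r}u_{j}t^{j}\right)=G(t,w)\,F_{R}\left(t,\overrightarrow{u},r\right),
\]
where $G(t,w)$ is the generating function (\ref{Nw}) for the polynomials $N_{n}(w)$ and $F_{R}$ is the generating function (\ref{H2}) for the polynomials $H_{n}(\overrightarrow{u},r)$.

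Next I would substitute the two series expansions into this product and form the Cauchy product of the resulting exponential generating functions:
\[
\sum_{n=0}^{\infty}\mathcal{K}\left(n;w,\overrightarrow{u},r\right)\frac{t^{n}}{n!}=\sum_{n=0}^{\infty}H_{n}\left(\overrightarrow{u},r\right)\frac{t^{n}}{n!}\sum_{n=0}^{\infty}N_{n}(w)\frac{t^{n}}{n!}.
\]
Collecting the coefficient of $t^{n}$ in the product of the two series, the factorial normalizations recombine in the standard way into a binomial coefficient, producing the convolution $\sum_{j=0}^{n}\binom{n}{j}H_{j}(\overrightarrow{u},r)N_{n-j}(w)$.

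Finally I would compare the coefficients of $t^{n}/n!$ on both sides to arrive at the claimed identity (\ref{C1}). I expect no genuine obstacle here: this argument is precisely the template already used in the proofs of (\ref{K1HC}), (\ref{K2HS}), and especially (\ref{NK3}) earlier in the paper, the only difference being the particular pair of factors that multiply together. The sole point requiring any care is the bookkeeping in the Cauchy product of exponential generating functions, which is entirely routine.
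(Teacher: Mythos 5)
Your proposal is correct and follows exactly the paper's own route: the factorization $\mathcal{G}\left( t,w,\overrightarrow{u},r\right) =G(t,w)F_{R}\left( t,\overrightarrow{u},r\right)$ via $\exp(A+B)=\exp(A)\exp(B)$ is precisely the functional equation (\ref{g11}) the paper derives, followed by the same Cauchy product and coefficient comparison. No gaps; the argument is complete as outlined.
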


\begin{proof}
By using (\ref{H2}), (\ref{g1}) and (\ref{Nw}), we derive the following
functional equation:%
\begin{equation}
\mathcal{G}\left( t,w,\overrightarrow{u},r\right) =G(t,w)F_{R}\left( t,%
\overrightarrow{u},r\right) .  \label{g11}
\end{equation}%
From the above equation, we have%
\[
\sum\limits_{n=0}^{\infty }\mathcal{K}\left( n;w,\overrightarrow{u},r\right) 
\frac{t^{n}}{n!}=\sum\limits_{n=0}^{\infty }N_{n}\left( w\right) \frac{t^{n}%
}{n!}\sum\limits_{n=0}^{\infty }H_{n}\left( \overrightarrow{u},r\right) 
\frac{t^{n}}{n!}. 
\]%
Therefore%
\[
\sum\limits_{n=0}^{\infty }\mathcal{K}\left( n;w,\overrightarrow{u},r\right) 
\frac{t^{n}}{n!}=\sum\limits_{n=0}^{\infty }\sum\limits_{j=0}^{n}\binom{n}{j}%
H_{j}\left( \overrightarrow{u},r\right) N_{n-j}\left( w\right) \frac{t^{n}}{%
n!}. 
\]%
Comparing the coefficients of $\frac{t^{n}}{n!}$ on both sides of the above
equation, we arrive at the desired result.
\end{proof}

Similarly, the polynomials $h\left( n,w,z;\overrightarrow{0},r,a,b\right) $, 
$h_{1}\left( n,w,z;\overrightarrow{0},r,a,b\right) $, and $h_{2}\left( n,w,z;%
\overrightarrow{0},r,a,b\right) $ are also linear combinations of the
polynomials $N_{n}\left( w\right) $, presented as follows:

Substituting $\overrightarrow{u}=\overrightarrow{0}$ into (\ref{NK3}), (\ref%
{NK1}) and (\ref{NK2}), after that combining the last equation with equation
(\ref{Nn77}), we arrive at the following identities, respectively:

\begin{corollary}
\[
h\left( n,w,z;\overrightarrow{0},r,a,b\right) =\sum\limits_{j=0}^{n}\binom{n%
}{j}y_{6}^{(z)}\left( n-j;a,b\right) N_{j}\left( w\right) ,
\]%
\[
h_{1}\left( n,w,z;\overrightarrow{0},r,a,b\right) =\sum\limits_{j=0}^{n}%
\binom{n}{j}y_{6}^{(z)}\left( n-j;a,b\right) \left( N_{j}\left( w\right)
+N_{j}\left( \overline{w}\right) \right) ,
\]%
and%
\[
h_{2}\left( n,w,z;\overrightarrow{0},r,a,b\right) =\sum\limits_{j=0}^{n}%
\binom{n}{j}y_{6}^{(z)}\left( n-j;a,b\right) \left( N_{j}\left( w\right)
-N_{j}\left( \overline{w}\right) \right) .
\]
\end{corollary}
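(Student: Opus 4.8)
The plan is to obtain all three identities as direct specializations of the three preceding theorems, namely equations (\ref{NK3}), (\ref{NK1}) and (\ref{NK2}), using only the defining relation (\ref{Nn77}). First I would recall that setting the $r$-tuple $\overrightarrow{u}$ equal to $\overrightarrow{0}=(0,0,\ldots ,0)$ in the generating function (\ref{g1}) collapses the polynomial part, since $\exp \left( wt+\sum_{j=1}^{r}0\cdot t^{j}\right) =\exp \left( wt\right) =G(t,w)$; consequently the polynomials $\mathcal{K}\left( n;w,\overrightarrow{0},r\right) $ coincide with $N_{n}\left( w\right) $ by (\ref{Nn77}), and likewise $\mathcal{K}\left( n;\overline{w},\overrightarrow{0},r\right) =N_{n}\left( \overline{w}\right) $.

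With this observation in hand, the first identity follows by substituting $\overrightarrow{u}=\overrightarrow{0}$ into the expansion (\ref{NK3}): the factor $y_{6}^{(z)}\left( n-j;a,b\right) $ is independent of $\overrightarrow{u}$, so only the factor $\mathcal{K}\left( j;w,\overrightarrow{u},r\right) $ is affected, and replacing it by $N_{j}\left( w\right) $ yields the stated formula for $h\left( n,w,z;\overrightarrow{0},r,a,b\right) $. For the second and third identities I would substitute $\overrightarrow{u}=\overrightarrow{0}$ into (\ref{NK1}) and (\ref{NK2}) respectively; each of these already carries the symmetric combination $\mathcal{K}\left( j;w,\overrightarrow{u},r\right) \pm \mathcal{K}\left( j;\overline{w},\overrightarrow{u},r\right) $, which upon specialization becomes $N_{j}\left( w\right) \pm N_{j}\left( \overline{w}\right) $, giving the formulas for $h_{1}$ and $h_{2}$.

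Since every step is a term-by-term substitution followed by renaming via (\ref{Nn77}), there is essentially no obstacle; the only point requiring care is to confirm that the reduction $\mathcal{K}\left( n;w,\overrightarrow{0},r\right) =N_{n}\left( w\right) $ is legitimate, that is, that the earlier theorems remain valid when evaluated at the zero $r$-tuple. This is immediate because those theorems were derived by comparing coefficients in functional equations that hold for arbitrary $\overrightarrow{u}$, including $\overrightarrow{u}=\overrightarrow{0}$. Alternatively, one could bypass the three theorems entirely and establish each identity from scratch by inserting $\overrightarrow{u}=\overrightarrow{0}$ directly into the generating functions (\ref{M1}), (\ref{M2}) and (\ref{M3}), using $\mathcal{G}\left( t,w,\overrightarrow{0},r\right) =G(t,w)$ together with the series $R_{1}\left( t,z;a,b\right) =\sum_{n=0}^{\infty }y_{6}^{(z)}\left( n;a,b\right) \frac{t^{n}}{n!}$ from (\ref{y66a}), then applying the Cauchy product and comparing coefficients of $\frac{t^{n}}{n!}$; this route reproduces the same three formulas.
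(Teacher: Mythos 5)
Your proposal is correct and follows exactly the paper's own route: the paper obtains the corollary by substituting $\overrightarrow{u}=\overrightarrow{0}$ into equations (\ref{NK3}), (\ref{NK1}) and (\ref{NK2}) and then invoking the identification $N_{n}\left( w\right) =\mathcal{K}\left( n;w,\overrightarrow{0},r\right) $ from (\ref{Nn77}), which is precisely your argument, including the observation that the same identification applies with $w$ replaced by $\overline{w}$. The alternative generating-function derivation you sketch at the end is also valid but is not needed; the paper treats the corollary as an immediate specialization, just as you do.
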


\section{Relations among Hermite-type polynomials and Chebyshev-type
polynomials and Dickson polynomials}

In this section, we give relations among the Hermite-type polynomials, the
generalized Hermite-Kamp\`{e} de F\`{e}riet polynomials, and the Chebyshev
polynomials.

Let $w=x+iy$. By using (\ref{Nxy}), we modify (\ref{C1}) as follows:%
\[
\mathcal{K}\left( n;x+iy,\overrightarrow{u},r\right) =\sum\limits_{j=0}^{n}%
\binom{n}{j}H_{j}\left( \overrightarrow{u},r\right) \sum\limits_{k=0}^{n-j}%
\binom{n-j}{k}x^{n-j-k}\left( iy\right) ^{k}.
\]%
Therefore, we define the following polynomials:%
\[
P_{1}\left( n,x,y,\overrightarrow{u},r\right) =\func{Re}\left\{ \mathcal{K}%
\left( n;x+iy,\overrightarrow{u},r\right) \right\} 
\]%
and%
\[
P_{2}\left( n,x,y,\overrightarrow{u},r\right) =\func{Im}\left\{ \mathcal{K}%
\left( n;x+iy,\overrightarrow{u},r\right) \right\} .
\]%
Explicit formulas for these polynomials are given as follows:%
\begin{equation}
P_{1}\left( n,x,y,\overrightarrow{u},r\right) =\sum\limits_{j=0}^{n}\binom{n%
}{j}H_{j}\left( \overrightarrow{u},r\right) \sum\limits_{k=0}^{\left[ \frac{%
n-j}{2}\right] }\left( -1\right) ^{k}\binom{n-j}{2k}x^{n-j-2k}y^{2k}
\label{R1}
\end{equation}%
and%
\begin{equation}
P_{2}\left( n,x,y,\overrightarrow{u},r\right) =\sum\limits_{j=0}^{n}\binom{n%
}{j}H_{j}\left( \overrightarrow{u},r\right) \sum\limits_{k=0}^{\left[ \frac{%
n-j-1}{2}\right] }\left( -1\right) ^{k}\binom{n-j}{2k+1}x^{n-j-2k-1}y^{2k+1}.
\label{I1}
\end{equation}

Combining equations (\ref{R1}) and (\ref{I1}) with (\ref{7a}) and (\ref{7b}%
), respectively, we arrive at the following theorem:

\begin{theorem}
Let $\overrightarrow{u}=\left( u_{1},u_{2},u_{3},\ldots ,u_{r}\right) $.
Then we have%
\begin{equation}
P_{1}\left( n,x,y,\overrightarrow{u},r\right) =\sum\limits_{j=0}^{n}\binom{n%
}{j}H_{j}\left( \overrightarrow{u},r\right) C_{n-j}\left( x,y\right)
\label{R1A}
\end{equation}%
and%
\begin{equation}
P_{2}\left( n,x,y,\overrightarrow{u},r\right) =\sum\limits_{j=0}^{n}\binom{n%
}{j}H_{j}\left( \overrightarrow{u},r\right) S_{n-j}\left( x,y\right) .
\label{l1A}
\end{equation}
\end{theorem}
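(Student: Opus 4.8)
The plan is to read the result straight off the explicit expansions (\ref{R1}) and (\ref{I1}) just established, by identifying the inner $k$-sums with the closed forms of $C_{n-j}$ and $S_{n-j}$. Recall that those expansions themselves arose from inserting the binomial identity (\ref{Nxy}) for $N_{n-j}(w)=(x+iy)^{n-j}$ into the linear-combination formula (\ref{C1}) and then splitting the inner binomial sum into its even- and odd-indexed parts, which produce the real part $P_{1}$ and the imaginary part $P_{2}$ respectively.

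First I would isolate the inner sum in (\ref{R1}), namely $\sum_{k=0}^{[(n-j)/2]}(-1)^{k}\binom{n-j}{2k}x^{(n-j)-2k}y^{2k}$, and compare it term by term with the computational formula (\ref{7a}) for $C_{m}(x,y)$. Taking $m=n-j$ in (\ref{7a}) shows this inner sum is exactly $C_{n-j}(x,y)$: the upper limit $[(n-j)/2]$, the alternating sign $(-1)^{k}$, the binomial coefficient $\binom{n-j}{2k}$, and the monomial $x^{(n-j)-2k}y^{2k}$ all match. Substituting this identification into (\ref{R1}) collapses the double sum into $\sum_{j=0}^{n}\binom{n}{j}H_{j}(\overrightarrow{u},r)C_{n-j}(x,y)$, which is (\ref{R1A}). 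The argument for (\ref{l1A}) is entirely parallel: the inner sum in (\ref{I1}) carries the upper limit $[(n-j-1)/2]$, the coefficient $\binom{n-j}{2k+1}$, and the monomial $x^{(n-j)-2k-1}y^{2k+1}$, which coincide term by term with (\ref{7b}) at $m=n-j$, so that inner sum equals $S_{n-j}(x,y)$; substitution then yields (\ref{l1A}).

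There is no genuine analytic obstacle here; the only thing requiring care is the index bookkeeping, that is, verifying that the floor functions $[(n-j)/2]$ and $[(n-j-1)/2]$ and the parities of the binomial coefficients line up precisely with the definitions (\ref{7a}) and (\ref{7b}). As a cross-check one may note that $P_{1}$ and $P_{2}$ are, by construction, the real and imaginary parts of $\mathcal{K}(n;x+iy,\overrightarrow{u},r)$, so $P_{1}=k_{1}$ and $P_{2}=k_{2}$; hence (\ref{R1A}) and (\ref{l1A}) also follow from Theorems (\ref{K1HC}) and (\ref{K2HS}) after the reindexing $j\mapsto n-j$, which gives an independent confirmation of both formulas.
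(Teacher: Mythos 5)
Your proof is correct and is exactly the paper's own argument: the paper obtains the theorem by combining the explicit expansions (\ref{R1}) and (\ref{I1}) with the computational formulas (\ref{7a}) and (\ref{7b}), which is precisely your identification of the inner $k$-sums with $C_{n-j}(x,y)$ and $S_{n-j}(x,y)$. Your closing cross-check via (\ref{K1HC}) and (\ref{K2HS}) is a valid bonus consistency observation, but the main argument matches the paper's route.
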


Substituting $y=\sqrt{1-x^{2}}$ into (\ref{R1}) and (\ref{I1}), we obtain
relations among the Chebyshev polynomials of the first kind $T_{n}\left(
x\right) $, the Chebyshev polynomials of the second kind $U_{n}\left(
x\right) $, the generalized Hermite-Kamp\`{e} de F\`{e}riet polynomials $%
H_{n}\left( \overrightarrow{u},r\right) $ by the following theorem:

\begin{theorem}
Let $\overrightarrow{u}=\left( u_{1},u_{2},u_{3},\ldots ,u_{r}\right) $.
Then we have%
\[
P_{3}(n,x,\overrightarrow{u},r)=\sum\limits_{j=0}^{n}\binom{n}{j}H_{j}\left( 
\overrightarrow{u},r\right) T_{n-j}\left( x\right) , 
\]%
where%
\[
P_{3}(n,x,\overrightarrow{u},r)=P_{1}\left( n,x,\sqrt{1-x^{2}},%
\overrightarrow{u},r\right) 
\]

and%
\[
P_{4}(n,x,\overrightarrow{u},r)=\sum\limits_{j=0}^{n}\binom{n}{j}H_{j}\left( 
\overrightarrow{u},r\right) U_{n-j-1}\left( x\right) 
\]%
where%
\[
P_{4}(n,x,\overrightarrow{u},r)=\frac{P_{2}\left( n,x,\sqrt{1-x^{2}},%
\overrightarrow{u},r\right) }{\sqrt{1-x^{2}}}. 
\]
\end{theorem}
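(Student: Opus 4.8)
The plan is to specialize the already-proven formulas (\ref{R1A}) and (\ref{l1A}) at the particular value $y=\sqrt{1-x^{2}}$ and then recognize the resulting inner factors $C_{n-j}(x,\sqrt{1-x^{2}})$ and $S_{n-j}(x,\sqrt{1-x^{2}})$ as the Chebyshev polynomials $T_{n-j}(x)$ and $\sqrt{1-x^{2}}\,U_{n-j-1}(x)$ respectively. First I would substitute $y=\sqrt{1-x^{2}}$ into the explicit sum (\ref{7a}) for $C_{m}(x,y)$, so that $y^{2k}=(1-x^{2})^{k}=(-1)^{k}(x^{2}-1)^{k}$; the sign $(-1)^{k}$ in (\ref{7a}) cancels against the $(-1)^{k}$ coming from $(1-x^{2})^{k}$, leaving exactly the sum $\sum_{k}\binom{m}{2k}(x^{2}-1)^{k}x^{m-2k}$, which by the computational formula (\ref{Ce1}) equals $T_{m}(x)$. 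Hence $C_{m}(x,\sqrt{1-x^{2}})=T_{m}(x)$, and the first claim follows immediately by setting $m=n-j$ in (\ref{R1A}) and invoking the definition $P_{3}(n,x,\overrightarrow{u},r)=P_{1}(n,x,\sqrt{1-x^{2}},\overrightarrow{u},r)$.

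For the second identity I would run the parallel computation with the sine polynomials. Substituting $y=\sqrt{1-x^{2}}$ into (\ref{7b}) gives $y^{2k+1}=(1-x^{2})^{k}\sqrt{1-x^{2}}$, and again the sign $(-1)^{k}$ in (\ref{7b}) merges with $(1-x^{2})^{k}=(-1)^{k}(x^{2}-1)^{k}$ to produce $\sum_{k}\binom{m}{2k+1}(x^{2}-1)^{k}x^{m-2k-1}$ times the factor $\sqrt{1-x^{2}}$. By the computational formula (\ref{Ce2}) the remaining sum is $U_{m-1}(x)$, so $S_{m}(x,\sqrt{1-x^{2}})=\sqrt{1-x^{2}}\,U_{m-1}(x)$. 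Plugging $m=n-j$ into (\ref{l1A}) and dividing by $\sqrt{1-x^{2}}$, as dictated by the definition $P_{4}(n,x,\overrightarrow{u},r)=P_{2}(n,x,\sqrt{1-x^{2}},\overrightarrow{u},r)/\sqrt{1-x^{2}}$, pulls the common factor $\sqrt{1-x^{2}}$ out of every term of the sum and cancels it, yielding $\sum_{j}\binom{n}{j}H_{j}(\overrightarrow{u},r)U_{n-j-1}(x)$.

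The only genuine content is the pair of identifications $C_{m}(x,\sqrt{1-x^{2}})=T_{m}(x)$ and $S_{m}(x,\sqrt{1-x^{2}})=\sqrt{1-x^{2}}\,U_{m-1}(x)$; everything else is substitution into formulas proved earlier in the excerpt. I expect the main obstacle, such as it is, to be purely bookkeeping: keeping careful track of the two sources of the sign $(-1)^{k}$ (one from the cosine/sine expansion, one from rewriting $1-x^{2}$ as $-(x^{2}-1)$) so that they cancel correctly, and confirming that the index ranges $\left[\frac{m}{2}\right]$ and $\left[\frac{m-1}{2}\right]$ in (\ref{7a})--(\ref{7b}) match those in (\ref{Ce1})--(\ref{Ce2}) with $m=n-j$. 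There is no analytic difficulty; the identity is a formal consequence of the given generating-function expansions.
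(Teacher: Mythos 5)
Your proposal is correct and follows essentially the same route as the paper: the paper obtains the theorem by substituting $y=\sqrt{1-x^{2}}$ into the explicit double sums (\ref{R1}) and (\ref{I1}) and matching the inner sums with the Chebyshev formulas (\ref{Ce1}) and (\ref{Ce2}), which is exactly your sign-cancellation computation packaged differently. The only cosmetic difference is ordering: you first prove the identifications $C_{m}(x,\sqrt{1-x^{2}})=T_{m}(x)$ and $S_{m}(x,\sqrt{1-x^{2}})=\sqrt{1-x^{2}}\,U_{m-1}(x)$ and then insert them into (\ref{R1A}) and (\ref{l1A}), whereas the paper derives these identifications afterwards as the corollary containing (\ref{CT}) and (\ref{SU}).
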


Using the polynomials $P_{3}(n,x,\overrightarrow{u},r)$ and $P_{4}(n,x,%
\overrightarrow{u},r)$, we arrive at the following corollary:

\begin{corollary}
Let $n\in \mathbb{N}_{0}$. Then we have%
\begin{equation}
T_{n}\left( x\right) =C_{n}\left( x,\sqrt{1-x^{2}}\right) =\func{Re}\left\{
N_{n}\left( \left( x,\sqrt{1-x^{2}}\right) \right) \right\}   \label{CT}
\end{equation}%
and for $n\geq 1$%
\begin{equation}
U_{n-1}\left( x\right) =\frac{S_{n}\left( x,\sqrt{1-x^{2}}\right) }{\sqrt{%
1-x^{2}}}=\frac{1}{\sqrt{1-x^{2}}}\func{Im}\left\{ N_{n}\left( \left( x,%
\sqrt{1-x^{2}}\right) \right) \right\} .  \label{SU}
\end{equation}
\end{corollary}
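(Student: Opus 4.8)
The plan is to derive this corollary by specializing the immediately preceding theorem (the one expressing $P_{3}$ and $P_{4}$ as $H$-weighted sums of Chebyshev polynomials) to the degenerate parameter vector $\overrightarrow{u}=\overrightarrow{0}$. The first step is to record the key collapse: since $F_{R}(t,\overrightarrow{0},r)=\exp(0)=1$, equation (\ref{H2}) forces $H_{0}(\overrightarrow{0},r)=1$ and $H_{j}(\overrightarrow{0},r)=0$ for all $j\geq 1$. Consequently, in $P_{3}(n,x,\overrightarrow{u},r)=\sum_{j}\binom{n}{j}H_{j}(\overrightarrow{u},r)T_{n-j}(x)$ only the $j=0$ term survives, giving $P_{3}(n,x,\overrightarrow{0},r)=T_{n}(x)$; in the same way $P_{4}(n,x,\overrightarrow{0},r)=U_{n-1}(x)$.

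Next I would unwind the definitions of $P_{1}$ and $P_{2}$ at $\overrightarrow{u}=\overrightarrow{0}$. By (\ref{Nn77}) we have $\mathcal{K}(n;w,\overrightarrow{0},r)=N_{n}(w)$, so $P_{1}(n,x,y,\overrightarrow{0},r)=\func{Re}\{N_{n}((x,y))\}$ and $P_{2}(n,x,y,\overrightarrow{0},r)=\func{Im}\{N_{n}((x,y))\}$. On the other hand, setting $\overrightarrow{u}=\overrightarrow{0}$ in (\ref{R1A}) and (\ref{l1A}) (again only the $j=0$ term survives) yields $P_{1}(n,x,y,\overrightarrow{0},r)=C_{n}(x,y)$ and $P_{2}(n,x,y,\overrightarrow{0},r)=S_{n}(x,y)$. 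Evaluating both descriptions at $y=\sqrt{1-x^{2}}$ and combining with the values of $P_{3},P_{4}$ from the first step then produces the two asserted chains of equalities, the sine chain being divided through by $\sqrt{1-x^{2}}$ to match $P_{4}$. The identification of the real and imaginary parts with $C_{n}$ and $S_{n}$ can also be seen directly from (\ref{Nxy}), namely $N_{n}(w)=(x+iy)^{n}$, by separating its binomial expansion into even and odd index $j$.

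A robust alternative, which avoids invoking the $P_{3},P_{4}$ theorem and is essentially its underlying computation, is the direct substitution $y=\sqrt{1-x^{2}}$ into the explicit formulas (\ref{7a}) and (\ref{7b}). In the cosine case $y^{2k}=(1-x^{2})^{k}$, and the sign absorbs as $(-1)^{k}(1-x^{2})^{k}=(x^{2}-1)^{k}$, turning (\ref{7a}) termwise into the computational formula (\ref{Ce1}) for $T_{n}(x)$. In the sine case $y^{2k+1}=(1-x^{2})^{k}\sqrt{1-x^{2}}$, so one factor of $\sqrt{1-x^{2}}$ pulls out of every term and the remaining sum matches (\ref{Ce2}) for $U_{n-1}(x)$. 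The only point requiring a little care is this odd-power bookkeeping together with the division by $\sqrt{1-x^{2}}$: one should read (\ref{SU}) as a polynomial identity valid for $n\geq 1$ (equivalently, restrict to $|x|<1$, where $\sqrt{1-x^{2}}$ is real and nonzero, and extend by continuity) so that the factor cancels cleanly. Beyond this, the argument is routine and poses no real obstacle.
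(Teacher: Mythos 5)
Your primary argument---specializing the $P_{3},P_{4}$ theorem at $\overrightarrow{u}=\overrightarrow{0}$, where $H_{0}(\overrightarrow{0},r)=1$ and $H_{j}(\overrightarrow{0},r)=0$ for $j\geq 1$ collapse the sums, and identifying $P_{1},P_{2}$ with $C_{n},S_{n}$ and with $\func{Re},\func{Im}$ of $N_{n}$ via (\ref{Nn77})---is exactly the route the paper takes (its ``Using the polynomials $P_{3}$ and $P_{4}$, we arrive at the following corollary''). The alternative you sketch via direct substitution of $y=\sqrt{1-x^{2}}$ into (\ref{7a}), (\ref{7b}) against (\ref{Ce1}), (\ref{Ce2}) is just the computation underlying that theorem, so the proposal is correct and essentially identical to the paper's proof.
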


By (\ref{Dickson}), (\ref{dickson2}), (\ref{CT}) and (\ref{SU}), we obtain
the following result which are related to the Dickson polynomials, the
polynomials $C_{n}\left( x,y\right) $, $S_{n}\left( x,y\right) $ and the
polynomials $N_{n}\left( w\right) $:

\begin{corollary}
Let $n\in \mathbb{N}_{0}$. Then we have%
\[
D_{n}\left( 2x,1\right) =2C_{n}\left( x,\sqrt{1-x^{2}}\right) =2\func{Re}%
\left\{ N_{n}\left( \left( x,\sqrt{1-x^{2}}\right) \right) \right\} .
\]
\end{corollary}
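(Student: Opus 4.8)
The plan is to chain together two facts already established in the excerpt: the specialization (\ref{Dickson}) of the Dickson polynomial of the first kind at $\alpha = 1$, namely $D_n(x,1) = 2T_n\left( \frac{x}{2}\right)$, and the identity (\ref{CT}) relating the Chebyshev polynomial $T_n$ to the polynomials $C_n$ and $N_n$ evaluated at $y = \sqrt{1-x^2}$. No new generating-function manipulation is needed; the corollary is a direct consequence of specializing and composing these two displays.

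First I would substitute $x \mapsto 2x$ into (\ref{Dickson}). Since $\frac{2x}{2} = x$, this immediately yields
\[
D_n\left( 2x, 1\right) = 2T_n\left( x\right).
\]
Next I would invoke (\ref{CT}), which gives $T_n\left( x\right) = C_n\left( x, \sqrt{1-x^2}\right) = \func{Re}\left\{ N_n\left( \left( x,\sqrt{1-x^2}\right) \right) \right\}$. Multiplying this chain of equalities by $2$ and combining it with the previous display produces
\[
D_n\left( 2x, 1\right) = 2T_n\left( x\right) = 2C_n\left( x, \sqrt{1-x^2}\right) = 2\func{Re}\left\{ N_n\left( \left( x,\sqrt{1-x^2}\right) \right) \right\},
\]
which is exactly the asserted corollary.

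The substance of the argument lies entirely upstream, in the already-proven identity (\ref{CT}), rather than in the corollary itself, so there is no real obstacle here. That identity rests on inserting $y = \sqrt{1-x^2}$, so that $y^{2k} = \left( 1-x^2\right)^k$, into the computational formula (\ref{7a}) for $C_n\left( x,y\right)$ and matching the result against the computational formula (\ref{Ce1}) for $T_n\left( x\right)$ via $\left( x^2-1\right)^k = \left( -1\right)^k \left( 1-x^2\right)^k$; the identification $C_n\left( x,y\right) = \func{Re}\left\{ N_n\left( w\right) \right\}$ follows because (\ref{6a}) and (\ref{Nw}) both express these quantities as $\func{Re}\left\{ \left( x+iy\right)^n\right\}$. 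Granting (\ref{Dickson}) and (\ref{CT}), the corollary is a one-line substitution with no further subtlety; the companion references (\ref{dickson2}) and (\ref{SU}) would be needed only for an analogous second-kind statement involving $\mathfrak{E}_n$ and $U_{n-1}$, which this corollary does not assert.
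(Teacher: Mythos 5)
Your proof is correct and takes essentially the same route as the paper, which obtains this corollary precisely by combining the specialization (\ref{Dickson}), $D_{n}\left( x,1\right) =2T_{n}\left( \frac{x}{2}\right) $, with the identity (\ref{CT}) after replacing $x$ by $2x$. Your remark that (\ref{dickson2}) and (\ref{SU}) are only needed for the companion second-kind statement involving $\mathfrak{E}_{n-1}$ and $U_{n-1}$ is also accurate.
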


\begin{corollary}
Let $n\in \mathbb{N}$. Then we have%
\[
\mathfrak{E}_{n-1}\left( 2x,1\right) =\frac{S_{n}\left( x,\sqrt{1-x^{2}}%
\right) }{\sqrt{1-x^{2}}}=\frac{1}{\sqrt{1-x^{2}}}\func{Im}\left\{
N_{n}\left( \left( x,\sqrt{1-x^{2}}\right) \right) \right\} .
\]
\end{corollary}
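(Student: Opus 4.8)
The plan is to obtain this identity as an immediate consequence of two facts already established in the excerpt: the relation (\ref{dickson2}) tying the Dickson polynomials of the second kind to the Chebyshev polynomials of the second kind, and the relation (\ref{SU}) linking $U_{n-1}(x)$ to the polynomials $S_n$ and to the imaginary part of $N_n$. The entire proof is a chaining of these two identities, exactly parallel to how the preceding corollary handles $D_n(2x,1)$ via (\ref{Dickson}) and (\ref{CT}).

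First I would recall (\ref{dickson2}), namely $\mathfrak{E}_n(x,1)=U_n\!\left(\frac{x}{2}\right)$. Shifting the index from $n$ to $n-1$ gives $\mathfrak{E}_{n-1}(x,1)=U_{n-1}\!\left(\frac{x}{2}\right)$, and then substituting $x\mapsto 2x$ yields the single computational bridge
\[
\mathfrak{E}_{n-1}(2x,1)=U_{n-1}(x).
\]
The order here matters: the argument doubling $x\mapsto 2x$ must be applied after the index shift so that the internal factor of $\frac{1}{2}$ in $U_{n-1}\!\left(\frac{x}{2}\right)$ is exactly cancelled.

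Next I would invoke (\ref{SU}), which states that for $n\geq 1$,
\[
U_{n-1}(x)=\frac{S_n\left(x,\sqrt{1-x^2}\right)}{\sqrt{1-x^2}}=\frac{1}{\sqrt{1-x^2}}\func{Im}\left\{N_n\left(\left(x,\sqrt{1-x^2}\right)\right)\right\}.
\]
Substituting this into the bridge equation $\mathfrak{E}_{n-1}(2x,1)=U_{n-1}(x)$ produces the full three-term equality asserted in the corollary.

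I do not anticipate any genuine obstacle here; the only point requiring care is the bookkeeping of the index shift together with the substitution $x\mapsto 2x$, and the mild restriction $n\in\mathbb{N}$ (so that $U_{n-1}$ and the shifted index are meaningful), which matches the hypothesis. The result is therefore purely a combination of previously stated relations, in complete structural analogy with the corollary immediately above it for the first-kind Dickson polynomials.
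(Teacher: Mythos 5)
Your proposal is correct and follows exactly the paper's route: the paper derives this corollary by combining (\ref{dickson2}), i.e.\ $\mathfrak{E}_{n}\left( x,1\right) =U_{n}\left( \frac{x}{2}\right)$, with (\ref{SU}), which is precisely your chain $\mathfrak{E}_{n-1}\left( 2x,1\right) =U_{n-1}\left( x\right) =\frac{S_{n}\left( x,\sqrt{1-x^{2}}\right) }{\sqrt{1-x^{2}}}$. The only superfluous point is your claim that the index shift must precede the substitution $x\mapsto 2x$; these two operations act on independent slots and commute, so no ordering care is actually needed.
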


\section{Identities and relations including Chebyshev polynomials and
trigonometric polynomials}

Here, we give some identities and formulas which are relations among the
Chebyshev polynomials, the Dickson polynomials, the Bernoulli numbers, the
Euler numbers the Stirling numbers and other special polynomials.

Substituting $y=\sqrt{1-x^{2}}$ into Theorem 2.9 of \cite{kilar}, then
combining (\ref{CT}) and (\ref{SU}), we have the following result:

\begin{corollary}
Let $n\in 
\mathbb{N}
$. Then we have%
\begin{equation}
U_{n-1}\left( x\right) =2^{1-n}\sum_{j=1}^{n}\binom{n}{j}U_{j-1}\left(
x\right) T_{n-j}\left( x\right) .  \label{UT}
\end{equation}
\end{corollary}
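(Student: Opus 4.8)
The plan is to read off the corollary directly from the cited convolution identity, Theorem 2.9 of \cite{kilar}, together with the Chebyshev dictionary recorded in (\ref{CT}) and (\ref{SU}). Theorem 2.9 of \cite{kilar} expresses the sine-type polynomials as a convolution against the cosine-type ones,
\[
S_{n}\left( x,y\right) =2^{1-n}\sum_{j=1}^{n}\binom{n}{j}C_{n-j}\left( x,y\right) S_{j}\left( x,y\right) .
\]
This identity is itself a consequence of the double-angle formula $\sin \left( 2yt\right) =2\sin \left( yt\right) \cos \left( yt\right) $ applied to the generating functions (\ref{6a}) and (\ref{6b}): the functional equation $F_{S}\left( 2t,x,y\right) =2F_{C}\left( t,x,y\right) F_{S}\left( t,x,y\right) $ gives, after a Cauchy product and comparison of the coefficients of $\frac{t^{n}}{n!}$, the relation $2^{n}S_{n}=2\sum_{j=0}^{n}\binom{n}{j}C_{n-j}S_{j}$, in which the $j=0$ term drops because $S_{0}\left( x,y\right) =0$.

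First I would substitute $y=\sqrt{1-x^{2}}$ into the displayed convolution. Then I would apply the dictionary of (\ref{CT}) and (\ref{SU}): the former turns every factor $C_{n-j}\left( x,\sqrt{1-x^{2}}\right) $ into $T_{n-j}\left( x\right) $, while the latter turns the left-hand side $S_{n}\left( x,\sqrt{1-x^{2}}\right) $ into $\sqrt{1-x^{2}}\,U_{n-1}\left( x\right) $ and each right-hand factor $S_{j}\left( x,\sqrt{1-x^{2}}\right) $ into $\sqrt{1-x^{2}}\,U_{j-1}\left( x\right) $. The resulting equation is
\[
\sqrt{1-x^{2}}\,U_{n-1}\left( x\right) =2^{1-n}\sum_{j=1}^{n}\binom{n}{j}T_{n-j}\left( x\right) \sqrt{1-x^{2}}\,U_{j-1}\left( x\right) .
\]

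The factor $\sqrt{1-x^{2}}$ then occurs on both sides, and cancelling it yields exactly (\ref{UT}). I expect this cancellation to be the only point needing care: it is immediate for $x\neq \pm 1$, and since both sides of (\ref{UT}) are polynomials in $x$, equality for $x\neq \pm 1$ forces equality for all $x$ by the identity theorem for polynomials. Everything else is a mechanical substitution of the two Chebyshev translations into the cited formula, so I anticipate no further difficulty.
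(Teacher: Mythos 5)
Your proposal is correct and takes essentially the same route as the paper: the paper's proof consists precisely of substituting $y=\sqrt{1-x^{2}}$ into the convolution identity of Theorem 2.9 of the cited reference and then applying (\ref{CT}) and (\ref{SU}), exactly as you do, with your cancellation of the common factor $\sqrt{1-x^{2}}$ handled soundly via the polynomial identity argument. The only difference is that you also reconstruct and prove the cited identity itself, using the functional equation $F_{S}\left( 2t,x,y\right) =2F_{C}\left( t,x,y\right) F_{S}\left( t,x,y\right) $ coming from the double-angle formula, which the paper merely cites; this makes your argument self-contained.
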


By using (\ref{Dickson}), (\ref{dickson2}) and (\ref{UT}), we also obtain
the following result:

\begin{corollary}
Let $n\in 
\mathbb{N}
$. Then we have%
\[
\mathfrak{E}_{n-1}\left( 2x,1\right) =2^{-n}\sum_{j=1}^{n}\binom{n}{j}%
\mathfrak{E}_{j-1}\left( 2x,1\right) D_{n-j}\left( 2x,1\right) . 
\]
\end{corollary}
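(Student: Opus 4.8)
The plan is to obtain this identity as a direct translation of the Chebyshev identity (\ref{UT}) into the language of Dickson polynomials, using the dictionary provided by (\ref{Dickson}) and (\ref{dickson2}). First I would record the two substitution rules obtained by evaluating those relations at the argument $2x$. From (\ref{Dickson}), $D_{m}\left( 2x,1\right) =2T_{m}\left( x\right) $, which I rewrite as
\begin{equation*}
T_{m}\left( x\right) =\frac{1}{2}D_{m}\left( 2x,1\right) ,
\end{equation*}
and from (\ref{dickson2}), $\mathfrak{E}_{m}\left( 2x,1\right) =U_{m}\left( x\right) $, i.e.\ every Chebyshev polynomial $U_{m}\left( x\right) $ of the second kind is replaced by $\mathfrak{E}_{m}\left( 2x,1\right) $ with no extra factor.

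Next I would start from the identity (\ref{UT}),
\begin{equation*}
U_{n-1}\left( x\right) =2^{1-n}\sum_{j=1}^{n}\binom{n}{j}U_{j-1}\left(
x\right) T_{n-j}\left( x\right) ,
\end{equation*}
and substitute the three Chebyshev occurrences termwise: $U_{n-1}\left( x\right) \mapsto \mathfrak{E}_{n-1}\left( 2x,1\right) $ on the left, $U_{j-1}\left( x\right) \mapsto \mathfrak{E}_{j-1}\left( 2x,1\right) $ inside the sum, and $T_{n-j}\left( x\right) \mapsto \tfrac{1}{2}D_{n-j}\left( 2x,1\right) $. The only bookkeeping point is the constant: since each summand contains exactly one factor of the first kind $T_{n-j}$, each summand acquires precisely one factor of $\tfrac{1}{2}$, so the overall prefactor becomes $2^{1-n}\cdot 2^{-1}=2^{-n}$. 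Collecting these substitutions yields
\begin{equation*}
\mathfrak{E}_{n-1}\left( 2x,1\right) =2^{-n}\sum_{j=1}^{n}\binom{n}{j}
\mathfrak{E}_{j-1}\left( 2x,1\right) D_{n-j}\left( 2x,1\right) ,
\end{equation*}
which is exactly the claimed identity.

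There is essentially no analytic obstacle here; the entire content is the clean linear dictionary between Chebyshev and Dickson polynomials at $\alpha =1$. The one place demanding care is the power of $2$: one must verify that the single $T$-factor per term is what converts the prefactor $2^{1-n}$ of (\ref{UT}) into $2^{-n}$, rather than, say, absorbing additional factors from the $U$-to-$\mathfrak{E}$ replacements, which carry no constant. I expect this factor-of-$2$ accounting to be the only step worth checking explicitly; the rest follows by a verbatim substitution into (\ref{UT}).
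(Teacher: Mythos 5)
Your proposal is correct and is precisely the paper's own argument: the paper derives this corollary by substituting the relations (\ref{Dickson}) and (\ref{dickson2}) (evaluated at $2x$, i.e.\ $T_{m}\left( x\right) =\frac{1}{2}D_{m}\left( 2x,1\right) $ and $U_{m}\left( x\right) =\mathfrak{E}_{m}\left( 2x,1\right) $) into the identity (\ref{UT}), with the single factor of $\frac{1}{2}$ per summand turning $2^{1-n}$ into $2^{-n}$ exactly as you describe. Your factor-of-$2$ accounting is also consistent at the $j=n$ term, since $T_{0}\left( x\right) =\frac{1}{2}D_{0}\left( 2x,1\right) $ holds there as well.
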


Substituting $y=\sqrt{1-x^{2}}$ into Theorem 1 of \cite{KimRyoo}, we have%
\[
E_{n}^{\left( C\right) }\left( x,\sqrt{1-x^{2}}\right) =\sum\limits_{j=0}^{n}%
\binom{n}{j}C_{j}\left( x,\sqrt{1-x^{2}}\right) E_{n-j} 
\]%
and%
\[
E_{n}^{\left( S\right) }\left( x,\sqrt{1-x^{2}}\right) =\sum\limits_{j=1}^{n}%
\binom{n}{j}S_{j}\left( x,\sqrt{1-x^{2}}\right) E_{n-j}. 
\]

Combining above equations with (\ref{CT}) and (\ref{SU}), respectively, we
obtain the following results:

\begin{corollary}
Let $n\in \mathbb{N}_{0}$. Then we have%
\begin{equation}
E_{n}^{\left( C\right) }\left( x,\sqrt{1-x^{2}}\right) =\sum\limits_{j=0}^{n}%
\binom{n}{j}T_{j}\left( x\right) E_{n-j}.  \label{ECT}
\end{equation}
\end{corollary}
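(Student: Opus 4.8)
The plan is to start from the identity obtained by specializing Theorem 1 of \cite{KimRyoo} at $y=\sqrt{1-x^{2}}$, which is displayed immediately above the corollary, namely
\[
E_{n}^{\left( C\right) }\left( x,\sqrt{1-x^{2}}\right) =\sum\limits_{j=0}^{n}\binom{n}{j}C_{j}\left( x,\sqrt{1-x^{2}}\right) E_{n-j}.
\]
The only remaining task is to rewrite the polynomials $C_{j}\left( x,\sqrt{1-x^{2}}\right)$ appearing inside the sum in terms of Chebyshev polynomials of the first kind.

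First I would invoke equation (\ref{CT}), which asserts that $T_{j}\left( x\right) =C_{j}\left( x,\sqrt{1-x^{2}}\right)$ for every index $j$. Substituting this identity term by term into the sum above replaces each factor $C_{j}\left( x,\sqrt{1-x^{2}}\right)$ by $T_{j}\left( x\right)$, leaving the binomial coefficients and the Euler numbers $E_{n-j}$ untouched, and this yields exactly the claimed formula (\ref{ECT}).

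Since the whole argument reduces to a single substitution, there is essentially no serious obstacle; the content of the corollary is carried entirely by (\ref{CT}). If one wished to make the proof self-contained rather than citing (\ref{CT}) directly, the genuine step would be to check $C_{j}\left( x,\sqrt{1-x^{2}}\right)=T_{j}\left( x\right)$ from the explicit formulas: putting $y=\sqrt{1-x^{2}}$ in (\ref{7a}) turns each factor $(-1)^{k}y^{2k}$ into $(-1)^{k}(1-x^{2})^{k}=(x^{2}-1)^{k}$, so that $C_{j}\left( x,\sqrt{1-x^{2}}\right)=\sum_{k=0}^{\left[ j/2\right] }\binom{j}{2k}\left( x^{2}-1\right) ^{k}x^{j-2k}$, which is precisely the computational formula (\ref{Ce1}) for $T_{j}\left( x\right)$. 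This is the only place where the specific substitution $y=\sqrt{1-x^{2}}$ is exploited, and it is what links the cosine-type polynomials $C_{j}$ to the Chebyshev polynomials.
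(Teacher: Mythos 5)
Your proof is correct and follows exactly the paper's own route: specialize Theorem 1 of the cited Kim--Ryoo paper at $y=\sqrt{1-x^{2}}$ and then substitute the identity $T_{j}\left( x\right) =C_{j}\left( x,\sqrt{1-x^{2}}\right)$ from (\ref{CT}) term by term. Your optional self-contained verification of (\ref{CT}) via (\ref{7a}) and (\ref{Ce1}) is also correct, though the paper simply cites (\ref{CT}) as already established.
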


\begin{corollary}
Let $n\in \mathbb{N}$. Then we have%
\begin{equation}
E_{n}^{\left( S\right) }\left( x,\sqrt{1-x^{2}}\right) =\sqrt{1-x^{2}}%
\sum\limits_{j=1}^{n}\binom{n}{j}U_{j-1}\left( x\right) E_{n-j}.  \label{ESU}
\end{equation}
\end{corollary}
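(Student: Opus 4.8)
The plan is to combine the $E_{n}^{\left( S\right) }$-identity displayed immediately before the corollary with the relation (\ref{SU}); no new machinery is needed beyond what has already been established. Concretely, I would start from
\[
E_{n}^{\left( S\right) }\left( x,\sqrt{1-x^{2}}\right) =\sum\limits_{j=1}^{n}\binom{n}{j}S_{j}\left( x,\sqrt{1-x^{2}}\right) E_{n-j},
\]
which the preceding paragraph obtained by putting $y=\sqrt{1-x^{2}}$ into Theorem 1 of \cite{KimRyoo}. The goal is then simply to convert each $S_{j}\left( x,\sqrt{1-x^{2}}\right)$ appearing on the right-hand side into a Chebyshev polynomial of the second kind.

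The key step is to read (\ref{SU}) in its equivalent solved form
\[
S_{j}\left( x,\sqrt{1-x^{2}}\right) =\sqrt{1-x^{2}}\,U_{j-1}\left( x\right),
\]
valid for $j\geq 1$, and substitute it term by term into the sum. Because the summation index already ranges over $j=1,2,\ldots ,n$, every $S_{j}$ occurring is covered by this identity; the missing $j=0$ term is harmless, since $S_{0}\left( x,y\right) =0$ by (\ref{7b}), so no spurious $U_{-1}\left( x\right)$ is ever introduced. Factoring the common scalar $\sqrt{1-x^{2}}$ out of the finite sum then gives
\[
E_{n}^{\left( S\right) }\left( x,\sqrt{1-x^{2}}\right) =\sqrt{1-x^{2}}\sum\limits_{j=1}^{n}\binom{n}{j}U_{j-1}\left( x\right) E_{n-j},
\]
which is precisely (\ref{ESU}).

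I do not expect a genuine obstacle here: the argument is a direct substitution, entirely parallel to the proof of the companion cosine-identity (\ref{ECT}), where (\ref{CT}) plays the role that (\ref{SU}) plays above. The only point deserving a moment's care is the index bookkeeping, namely confirming that the shift from $S_{j}$ to $U_{j-1}$ respects the range $j\geq 1$ on which (\ref{SU}) is asserted, and that the factor $\sqrt{1-x^{2}}$ is indeed constant with respect to the summation variable so that it may be pulled outside the sum.
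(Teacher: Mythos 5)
Your proposal is correct and follows essentially the same route as the paper: the paper also takes the identity obtained by substituting $y=\sqrt{1-x^{2}}$ into Theorem 1 of \cite{KimRyoo} and combines it with (\ref{SU}), i.e.\ replaces $S_{j}\left( x,\sqrt{1-x^{2}}\right) $ by $\sqrt{1-x^{2}}\,U_{j-1}\left( x\right) $ for $j\geq 1$ and factors the constant $\sqrt{1-x^{2}}$ out of the sum. Your added care about the $j=0$ term (which vanishes since $S_{0}\left( x,y\right) =0$) is a harmless refinement of the same argument.
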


By using (\ref{Dickson}), (\ref{dickson2}), (\ref{ECT}) and (\ref{ESU}), we
also obtain the following result:

\begin{corollary}
\[
E_{n}^{\left( C\right) }\left( x,\sqrt{1-x^{2}}\right) =\frac{1}{2}%
\sum\limits_{j=0}^{n}\binom{n}{j}D_{j}\left( 2x,1\right) E_{n-j} 
\]%
and%
\[
E_{n}^{\left( S\right) }\left( x,\sqrt{1-x^{2}}\right) =\sqrt{1-x^{2}}%
\sum\limits_{j=1}^{n}\binom{n}{j}\mathfrak{E}_{j-1}\left( 2x,1\right)
E_{n-j}. 
\]
\end{corollary}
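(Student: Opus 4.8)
The plan is to treat this corollary as a direct consequence of the two already-established expansions (\ref{ECT}) and (\ref{ESU}), converting the Chebyshev polynomials appearing there into Dickson polynomials via the specialization formulas (\ref{Dickson}) and (\ref{dickson2}). Since the substitution $y=\sqrt{1-x^{2}}$ has already been carried out to produce (\ref{ECT}) and (\ref{ESU}), no new generating-function manipulation is needed; the entire argument is an algebraic rewriting of the summands.

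First I would record the rescaled forms of the Dickson specializations. Formula (\ref{Dickson}) reads $D_{n}(x,1)=2T_{n}\!\left(\frac{x}{2}\right)$; replacing $x$ by $2x$ gives $D_{n}(2x,1)=2T_{n}(x)$, hence
\[
T_{n}(x)=\tfrac{1}{2}D_{n}(2x,1).
\]
Likewise, (\ref{dickson2}) reads $\mathfrak{E}_{n}(x,1)=U_{n}\!\left(\frac{x}{2}\right)$; replacing $x$ by $2x$ yields
\[
U_{n}(x)=\mathfrak{E}_{n}(2x,1),
\qquad\text{so in particular}\qquad
U_{j-1}(x)=\mathfrak{E}_{j-1}(2x,1).
\]

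With these two identities in hand, I would substitute $T_{j}(x)=\tfrac{1}{2}D_{j}(2x,1)$ into (\ref{ECT}) and pull the constant $\tfrac12$ out of the finite sum, which produces
\[
E_{n}^{(C)}\!\left(x,\sqrt{1-x^{2}}\right)
=\frac{1}{2}\sum_{j=0}^{n}\binom{n}{j}D_{j}(2x,1)\,E_{n-j},
\]
the first claimed formula. Then I would substitute $U_{j-1}(x)=\mathfrak{E}_{j-1}(2x,1)$ into (\ref{ESU}), giving
\[
E_{n}^{(S)}\!\left(x,\sqrt{1-x^{2}}\right)
=\sqrt{1-x^{2}}\sum_{j=1}^{n}\binom{n}{j}\mathfrak{E}_{j-1}(2x,1)\,E_{n-j},
\]
the second claimed formula.

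There is no genuine analytic obstacle here, since everything reduces to substitution into finite sums. The only point demanding care is the argument rescaling in the Dickson formulas: the corollary is stated in terms of $D_{j}(2x,1)$ and $\mathfrak{E}_{j-1}(2x,1)$, so one must apply (\ref{Dickson}) and (\ref{dickson2}) with argument $2x$ rather than $x$ to match the factor of $\tfrac12$ in the first identity and to get the clean equality $U_{j-1}(x)=\mathfrak{E}_{j-1}(2x,1)$ in the second. Confirming that these rescalings land exactly on the stated right-hand sides completes the proof.
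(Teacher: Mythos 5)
Your proposal is correct and follows exactly the paper's route: the paper obtains this corollary by combining (\ref{ECT}) and (\ref{ESU}) with the Dickson specializations (\ref{Dickson}) and (\ref{dickson2}), which after the rescaling $x\mapsto 2x$ give $T_{j}(x)=\tfrac{1}{2}D_{j}(2x,1)$ and $U_{j-1}(x)=\mathfrak{E}_{j-1}(2x,1)$, precisely as you did. Your write-up is in fact more explicit than the paper's one-line derivation, and the care you take with the argument rescaling is exactly the right point to check.
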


On the other hand, substituting $y=\sqrt{1-x^{2}}$ into Theorem 6 of \cite%
{KimRyoo}, we have%
\[
B_{n}^{\left( C\right) }\left( x,\sqrt{1-x^{2}}\right) =\sum\limits_{j=0}^{n}%
\binom{n}{j}C_{j}\left( x,\sqrt{1-x^{2}}\right) B_{n-j} 
\]%
and%
\[
B_{n}^{\left( S\right) }\left( x,\sqrt{1-x^{2}}\right) =\sum\limits_{j=1}^{n}%
\binom{n}{j}S_{j}\left( x,\sqrt{1-x^{2}}\right) B_{n-j}. 
\]%
Combining above equations with (\ref{CT}) and (\ref{SU}), respectively, we
obtain the following results:

\begin{corollary}
Let $n\in \mathbb{N}_{0}$. Then we have%
\begin{equation}
B_{n}^{\left( C\right) }\left( x,\sqrt{1-x^{2}}\right) =\sum\limits_{j=0}^{n}%
\binom{n}{j}T_{j}\left( x\right) B_{n-j}.  \label{BCT}
\end{equation}
\end{corollary}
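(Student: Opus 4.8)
The plan is to combine the decomposition of $B_{n}^{(C)}(x,y)$ recorded immediately above the statement with the already-proven identity (\ref{CT}). First I would take the expansion obtained by substituting $y=\sqrt{1-x^{2}}$ into Theorem 6 of \cite{KimRyoo}, namely
\[
B_{n}^{\left( C\right) }\left( x,\sqrt{1-x^{2}}\right) =\sum\limits_{j=0}^{n}\binom{n}{j}C_{j}\left( x,\sqrt{1-x^{2}}\right) B_{n-j},
\]
which already exhibits $B_{n}^{(C)}$ at this special argument as a binomial convolution of the polynomials $C_{j}(x,\sqrt{1-x^{2}})$ against the Bernoulli numbers $B_{n-j}$.

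Next I would invoke the earlier corollary, specifically the identity (\ref{CT}), which identifies the cosine-type polynomial at $y=\sqrt{1-x^{2}}$ with the Chebyshev polynomial of the first kind: $C_{j}\left( x,\sqrt{1-x^{2}}\right)=T_{j}(x)$ for every $j\in\mathbb{N}_{0}$. Substituting this termwise into the sum replaces each factor $C_{j}(x,\sqrt{1-x^{2}})$ by $T_{j}(x)$ while leaving the binomial coefficients and the Bernoulli numbers $B_{n-j}$ untouched, and this yields precisely the asserted formula (\ref{BCT}).

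The argument is entirely mechanical once (\ref{CT}) is in hand, so there is no genuine obstacle here: the only point requiring care is that the substitution $y=\sqrt{1-x^{2}}$ must be applied consistently to every occurrence of the second argument, so that (\ref{CT}) may legitimately be used for each index $j$ in the range $0\le j\le n$. The actual content of this corollary — that evaluating $C_{n}$ at $y=\sqrt{1-x^{2}}$ returns $T_{n}(x)$ — was established previously, ultimately by comparing the explicit formula (\ref{7a}) for $C_{n}(x,y)$ with the computational formula (\ref{Ce1}) for $T_{n}(x)$; here that fact is simply imported, leaving only the elementary term-by-term replacement to complete the proof.
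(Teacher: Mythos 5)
Your proposal is correct and matches the paper's own argument: the paper likewise substitutes $y=\sqrt{1-x^{2}}$ into Theorem 6 of \cite{KimRyoo} to get the convolution with $C_{j}\left( x,\sqrt{1-x^{2}}\right)$ and then applies (\ref{CT}) termwise to replace each $C_{j}\left( x,\sqrt{1-x^{2}}\right)$ by $T_{j}\left( x\right)$.
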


\begin{corollary}
Let $n\in \mathbb{N}$. The we have%
\begin{equation}
B_{n}^{\left( S\right) }\left( x,\sqrt{1-x^{2}}\right) =\sqrt{1-x^{2}}%
\sum\limits_{j=1}^{n}\binom{n}{j}U_{j-1}\left( x\right) B_{n-j}.  \label{BSU}
\end{equation}
\end{corollary}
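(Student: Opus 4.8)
The plan is to combine the sine-type companion of the Carlitz-type identity derived immediately above (obtained by substituting $y=\sqrt{1-x^{2}}$ into Theorem 6 of \cite{KimRyoo}),
\[
B_{n}^{\left( S\right) }\left( x,\sqrt{1-x^{2}}\right) =\sum\limits_{j=1}^{n}\binom{n}{j}S_{j}\left( x,\sqrt{1-x^{2}}\right) B_{n-j},
\]
with the relation (\ref{SU}) already established between the sine-type polynomials $S_{n}$ and the Chebyshev polynomials of the second kind $U_{n-1}$. The entire argument is a single substitution followed by factoring a common term out of the sum, so it proceeds exactly as the passage from (\ref{BCT}) did in the cosine case, but with $T_j$ replaced by its second-kind counterpart.

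First I would rewrite (\ref{SU}) in the form $S_{j}\left( x,\sqrt{1-x^{2}}\right) =\sqrt{1-x^{2}}\,U_{j-1}\left( x\right)$, valid for each $j\geq 1$. Substituting this expression into each summand of the identity above yields
\[
B_{n}^{\left( S\right) }\left( x,\sqrt{1-x^{2}}\right) =\sum\limits_{j=1}^{n}\binom{n}{j}\sqrt{1-x^{2}}\,U_{j-1}\left( x\right) B_{n-j}.
\]
Since the factor $\sqrt{1-x^{2}}$ is independent of the summation index $j$, I would then pull it out of the sum, which gives precisely (\ref{BSU}).

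There is essentially no obstacle here: the argument is purely algebraic and relies only on results already in hand, namely (\ref{SU}) and the sine-analogue of \cite{KimRyoo}. The one point deserving a brief check is the lower limit of summation: the sum starts at $j=1$ rather than $j=0$ because $S_{0}\left( x,y\right) =0$ (the empty sum in (\ref{7b}) at $n=0$), so the $j=0$ term contributes nothing; correspondingly $U_{j-1}\left( x\right)$ is evaluated only at nonnegative indices $j-1\geq 0$, and every term is well defined. This index bookkeeping mirrors exactly the Euler analogue treated in the step from the displayed $B_{n}^{(S)}$-identity to (\ref{ESU}), so the same reasoning applies verbatim.
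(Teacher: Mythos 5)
Your proof is correct and is essentially identical to the paper's own argument: the paper likewise substitutes $y=\sqrt{1-x^{2}}$ into Theorem 6 of \cite{KimRyoo} to get the sine-type identity and then combines it with (\ref{SU}), i.e.\ $S_{j}\left( x,\sqrt{1-x^{2}}\right) =\sqrt{1-x^{2}}\,U_{j-1}\left( x\right)$ for $j\geq 1$, pulling the factor $\sqrt{1-x^{2}}$ out of the sum. Your additional check on the lower summation limit (the $j=0$ term vanishing because $S_{0}\left( x,y\right) =0$) is sound and consistent with the paper.
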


By using (\ref{Dickson}), (\ref{dickson2}), (\ref{BCT}) and (\ref{BSU}) we
also obtain the following result:

\begin{corollary}
\[
B_{n}^{\left( C\right) }\left( x,\sqrt{1-x^{2}}\right) =\frac{1}{2}%
\sum\limits_{j=0}^{n}\binom{n}{j}D_{j}\left( 2x,1\right) B_{n-j} 
\]%
and%
\[
B_{n}^{\left( S\right) }\left( x,\sqrt{1-x^{2}}\right) =\sqrt{1-x^{2}}%
\sum\limits_{j=1}^{n}\binom{n}{j}\mathfrak{E}_{j-1}\left( 2x,1\right)
B_{n-j}. 
\]
\end{corollary}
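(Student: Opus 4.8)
The plan is to derive both identities by direct term-by-term substitution, converting the Chebyshev polynomials that appear in (\ref{BCT}) and (\ref{BSU}) into Dickson polynomials by means of the specializations (\ref{Dickson}) and (\ref{dickson2}). No new generating-function manipulation is needed; the two formulas to be proved are already established (in the Chebyshev form) by the preceding corollaries, so this final step is purely a change of notation.

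First I would treat the cosine-type identity. Starting from (\ref{BCT}), I need to express $T_{j}(x)$ in terms of a Dickson polynomial. Equation (\ref{Dickson}) reads $D_{n}(x,1)=2T_{n}(x/2)$; replacing $x$ by $2x$ gives $D_{j}(2x,1)=2T_{j}(x)$, hence $T_{j}(x)=\tfrac{1}{2}D_{j}(2x,1)$. Substituting this into the right-hand side of (\ref{BCT}) and pulling the constant factor $\tfrac{1}{2}$ outside the summation yields exactly the first claimed formula.

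Next I would treat the sine-type identity in the same manner. Starting from (\ref{BSU}), I invoke (\ref{dickson2}), namely $\mathfrak{E}_{n}(x,1)=U_{n}(x/2)$; replacing $x$ by $2x$ and the index $n$ by $j-1$ gives $\mathfrak{E}_{j-1}(2x,1)=U_{j-1}(x)$. Substituting this directly into (\ref{BSU}) produces the second claimed formula, with the prefactor $\sqrt{1-x^{2}}$ left untouched and the summation range $1\le j\le n$ preserved.

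There is no genuine obstacle in this argument; the only point demanding care is the bookkeeping of arguments, namely verifying that the doubling $x\mapsto 2x$ built into the Dickson specializations matches precisely the arguments $D_{j}(2x,1)$ and $\mathfrak{E}_{j-1}(2x,1)$ on the right-hand sides, and that the index shift $n\mapsto j-1$ in the sine case is applied consistently. Since both replacements are made inside the sums term by term, the binomial coefficients, the Bernoulli numbers $B_{n-j}$, and the summation ranges all carry over without alteration, completing the proof.
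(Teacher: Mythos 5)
Your proposal is correct and follows exactly the paper's own route: the paper obtains this corollary by combining (\ref{BCT}) and (\ref{BSU}) with the specializations (\ref{Dickson}) and (\ref{dickson2}), which is precisely your substitution $T_{j}(x)=\tfrac{1}{2}D_{j}(2x,1)$ and $U_{j-1}(x)=\mathfrak{E}_{j-1}(2x,1)$ performed term by term inside the sums. Nothing is missing; the argument is a notational translation, as you say.
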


By applying derivative operator to (\ref{6a}) and (\ref{6b}) with respect to 
$x$, then with respect to $y$, we obtain the following partial differential
equation:%
\[
\frac{\partial }{\partial x}F_{C}\left( t,x,y\right)
=\sum\limits_{n=0}^{\infty }\frac{\partial }{\partial x}C_{n}\left(
x,y\right) \frac{t^{n}}{n!}=\sum\limits_{n=0}^{\infty }nC_{n-1}\left(
x,y\right) \frac{t^{n}}{n!}. 
\]%
Comparing the coefficients of $\frac{t^{n}}{n!}$ on both sides of the above
equation, we have%
\[
\frac{\partial }{\partial x}C_{n}\left( x,y\right) =nC_{n-1}\left(
x,y\right) . 
\]%
Similarly, for $n\geq 1$, we have%
\[
\frac{\partial }{\partial x}S_{n}\left( x,y\right) =nS_{n-1}\left(
x,y\right) , 
\]%
\begin{equation}
\frac{\partial }{\partial y}C_{n}\left( x,y\right) =-nS_{n-1}\left(
x,y\right) ,  \label{A1}
\end{equation}%
\[
\frac{\partial }{\partial y}S_{n}\left( x,y\right) =nC_{n-1}\left(
x,y\right) . 
\]%
Substituting $y=\sqrt{1-x^{2}}$ into (\ref{A1}), we obtain the following
well-known identity as follows:%
\[
T_{n}^{\prime }\left( x\right) =nU_{n-1}\left( x\right) 
\]%
(\textit{cf.} \cite{Rivlin}).

\begin{theorem}
Let $n\geq 2$. Then we have%
\[
\frac{\partial ^{2}}{\partial x\partial y}C_{n}\left( x,y\right) =-n\left(
n-1\right) S_{n-2}\left( x,y\right) 
\]%
and%
\[
\frac{\partial ^{2}}{\partial x\partial y}S_{n}\left( x,y\right) =n\left(
n-1\right) C_{n-2}\left( x,y\right) . 
\]
\end{theorem}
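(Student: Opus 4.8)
The plan is to reduce both identities to repeated application of the first-order partial-derivative formulas that were already derived just above the statement. Recall that the excerpt establishes, for the polynomials generated by $F_C$ and $F_S$, the four relations $\frac{\partial}{\partial x}C_n(x,y)=nC_{n-1}(x,y)$, $\frac{\partial}{\partial x}S_n(x,y)=nS_{n-1}(x,y)$, the boxed identity (\ref{A1}) $\frac{\partial}{\partial y}C_n(x,y)=-nS_{n-1}(x,y)$, and $\frac{\partial}{\partial y}S_n(x,y)=nC_{n-1}(x,y)$. Since $C_n$ and $S_n$ are polynomials in $x$ and $y$, the mixed partial $\frac{\partial^2}{\partial x\,\partial y}$ is well-defined and we may compute it by first differentiating with respect to $y$ and then with respect to $x$.

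For the first identity I would write $\frac{\partial^2}{\partial x\,\partial y}C_n(x,y)=\frac{\partial}{\partial x}\bigl(\frac{\partial}{\partial y}C_n(x,y)\bigr)$, substitute (\ref{A1}) to get $\frac{\partial}{\partial x}\bigl(-nS_{n-1}(x,y)\bigr)$, pull the constant $-n$ out of the $x$-derivative, and then apply the formula $\frac{\partial}{\partial x}S_{n-1}(x,y)=(n-1)S_{n-2}(x,y)$. Collecting the factors yields $-n(n-1)S_{n-2}(x,y)$, which is the claimed expression. The second identity is entirely parallel: start from $\frac{\partial}{\partial x}\bigl(\frac{\partial}{\partial y}S_n(x,y)\bigr)$, use $\frac{\partial}{\partial y}S_n(x,y)=nC_{n-1}(x,y)$, and then differentiate in $x$ via $\frac{\partial}{\partial x}C_{n-1}(x,y)=(n-1)C_{n-2}(x,y)$, producing $n(n-1)C_{n-2}(x,y)$.

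There is no genuine obstacle here; the work is purely a two-step composition of established first-order rules, so the only thing to watch is the index bookkeeping. The hypothesis $n\ge 2$ is exactly what is needed: the inner $y$-derivative step already requires $n\ge 1$ (so that $S_{n-1}$ and $C_{n-1}$ make sense with the stated formulas), and the subsequent $x$-derivative of $S_{n-1}$ or $C_{n-1}$ requires $n-1\ge 1$, i.e.\ $n\ge 2$, ensuring that $S_{n-2}$ and $C_{n-2}$ are defined. I would note that, alternatively, the same two identities follow at the level of generating functions by applying $\frac{\partial^2}{\partial x\,\partial y}$ directly to $F_C(t,x,y)=\exp(xt)\cos(yt)$ and $F_S(t,x,y)=\exp(xt)\sin(yt)$, since $\frac{\partial^2}{\partial x\,\partial y}F_C=-t^2\exp(xt)\sin(yt)=-t^2F_S$ and $\frac{\partial^2}{\partial x\,\partial y}F_S=t^2\exp(xt)\cos(yt)=t^2F_C$; comparing the coefficients of $\frac{t^n}{n!}$ and using $t^2\sum S_m\frac{t^m}{m!}=\sum n(n-1)S_{n-2}\frac{t^n}{n!}$ gives the result once more. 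Either route is routine, and I would present the direct differentiation of the first-order formulas as the cleanest argument.
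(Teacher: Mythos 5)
Your argument is correct, and your primary route differs from the paper's. The paper proves this theorem entirely at the level of generating functions: it applies $\frac{\partial ^{2}}{\partial x\partial y}$ directly to (\ref{6a}) and (\ref{6b}), obtains the functional equations $\frac{\partial ^{2}}{\partial x\partial y}F_{C}\left( t,x,y\right) =-t^{2}F_{S}\left( t,x,y\right) $ and $\frac{\partial ^{2}}{\partial x\partial y}F_{S}\left( t,x,y\right) =t^{2}F_{C}\left( t,x,y\right) $, and then compares coefficients of $\frac{t^{n}}{n!}$ --- that is, exactly the ``alternative'' you sketch at the end and set aside. Your preferred route instead composes the first-order rules already recorded just before the theorem (in particular (\ref{A1}) and $\frac{\partial }{\partial x}S_{n}\left( x,y\right) =nS_{n-1}\left( x,y\right) $), which is a legitimate and arguably more economical derivation: it involves no new computation with power series, and it makes transparent why the hypothesis $n\geq 2$ is needed, a point the paper's proof leaves implicit in the coefficient bookkeeping. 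What the paper's approach buys in exchange is self-containedness and uniformity of method --- the functional-equation-plus-coefficient-comparison template is the engine used throughout the paper, and it does not lean on the earlier first-order identities (which were themselves obtained by that same template). Both proofs are sound; yours is a clean corollary-style reduction, the paper's is a from-scratch computation in its standard style.
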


\begin{proof}
By applying derivative operator to (\ref{6a}) and (\ref{6b}) with respect to 
$x$ and $y$, we obtain the following partial differential equations,
respectively:%
\[
\frac{\partial ^{2}}{\partial x\partial y}F_{C}\left( t,x,y\right)
=-t^{2}F_{S}\left( t,x,y\right) 
\]%
and%
\[
\frac{\partial ^{2}}{\partial x\partial y}F_{S}\left( t,x,y\right)
=t^{2}F_{C}\left( t,x,y\right) . 
\]%
From the above functional equations, we obtain%
\[
\sum\limits_{n=0}^{\infty }\frac{\partial ^{2}}{\partial x\partial y}%
C_{n}\left( x,y\right) \frac{t^{n}}{n!}=-\sum\limits_{n=0}^{\infty }n\left(
n-1\right) S_{n-2}\left( x,y\right) \frac{t^{n}}{n!} 
\]%
and%
\[
\sum\limits_{n=0}^{\infty }\frac{\partial ^{2}}{\partial x\partial y}%
S_{n}\left( x,y\right) \frac{t^{n}}{n!}=\sum\limits_{n=0}^{\infty }n\left(
n-1\right) C_{n-2}\left( x,y\right) \frac{t^{n}}{n!}. 
\]%
\bigskip Comparing the coefficients of $\frac{t^{n}}{n!}$ on both sides of
the above equations, we arrive at the desired result.
\end{proof}

\begin{theorem}
Let $n\in \mathbb{N}_{0}$. Then we have%
\begin{equation}
C_{n+1}\left( x,y\right) =xC_{n}\left( x,y\right) -yS_{n}\left( x,y\right) .
\label{s1}
\end{equation}
\end{theorem}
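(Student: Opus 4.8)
The plan is to differentiate the generating function (\ref{6a}) with respect to $t$ and read off the recurrence from the resulting functional equation; this parallels the $t$- and $x$-differentiation arguments already used for $C_{n}$ and $S_{n}$ earlier in the paper. First I would apply the product rule to $F_{C}\left( t,x,y\right) =\exp \left( xt\right) \cos \left( yt\right) $, which gives
\[
\frac{\partial }{\partial t}F_{C}\left( t,x,y\right) =x\exp \left( xt\right) \cos \left( yt\right) -y\exp \left( xt\right) \sin \left( yt\right) =xF_{C}\left( t,x,y\right) -yF_{S}\left( t,x,y\right) .
\]
This single identity is the heart of the proof: differentiating the exponential reproduces the factor $x$, while differentiating the cosine turns it into $-y\sin$, which is exactly the combination $xF_{C}-yF_{S}$ appearing on the right of (\ref{s1}).

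Next I would expand both sides as power series in $t$. Differentiating the series in (\ref{6a}) term by term and shifting the summation index (the $n=0$ term drops out, and setting $m=n-1$) yields
\[
\frac{\partial }{\partial t}\sum\limits_{n=0}^{\infty }C_{n}\left( x,y\right) \frac{t^{n}}{n!}=\sum\limits_{n=0}^{\infty }C_{n+1}\left( x,y\right) \frac{t^{n}}{n!}.
\]
Substituting the series (\ref{6a}) for $F_{C}$ and (\ref{6b}) for $F_{S}$ into the functional equation then gives
\[
\sum\limits_{n=0}^{\infty }C_{n+1}\left( x,y\right) \frac{t^{n}}{n!}=\sum\limits_{n=0}^{\infty }\left( xC_{n}\left( x,y\right) -yS_{n}\left( x,y\right) \right) \frac{t^{n}}{n!}.
\]
Comparing the coefficients of $t^{n}/n!$ on both sides produces (\ref{s1}) for every $n\in \mathbb{N}_{0}$.

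There is no real obstacle here; the argument is entirely formal, and the only points requiring minor care are the term-by-term differentiation of the power series (justified by absolute convergence of the entire functions involved, or purely formally if one works in the ring of formal power series) and the reindexing that turns $C_{n}$ into $C_{n+1}$. As an alternative I could instead insert the closed forms (\ref{7a}) and (\ref{7b}) for $C_{n+1}$, $C_{n}$, and $S_{n}$ and verify the identity by a binomial-coefficient manipulation, but the generating-function route above is shorter and matches the method used throughout the paper, so I would present that one.
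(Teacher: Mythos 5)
Your proof is correct and follows exactly the paper's own argument: differentiate the generating function $F_{C}\left( t,x,y\right)$ with respect to $t$ to obtain the functional equation $\frac{\partial }{\partial t}F_{C}=xF_{C}-yF_{S}$, expand in power series, and compare coefficients of $\frac{t^{n}}{n!}$. The extra care you note about term-by-term differentiation and reindexing is implicit in the paper's version but does not change the substance.
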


\begin{proof}
By applying derivative operator to (\ref{6a}) with respect to $t$, we obtain
the following partial differential equation:%
\[
\frac{\partial }{\partial t}F_{C}\left( t,x,y\right) =xF_{C}\left(
t,x,y\right) -yF_{S}\left( t,x,y\right) . 
\]%
From the above equation, we have%
\[
\sum\limits_{n=0}^{\infty }C_{n+1}\left( x,y\right) \frac{t^{n}}{n!}%
=x\sum\limits_{n=0}^{\infty }C_{n}\left( x,y\right) \frac{t^{n}}{n!}%
-y\sum\limits_{n=0}^{\infty }S_{n}\left( x,y\right) \frac{t^{n}}{n!}. 
\]%
Comparing the coefficients of $\frac{t^{n}}{n!}$ on both sides of the above
equation, we arrive at the desired result.
\end{proof}

\begin{remark}
By using (\ref{CT}), (\ref{SU}) and (\ref{s1}), we arrive at the equation (%
\ref{TileU2}).
\end{remark}

\begin{theorem}
Let $n\in \mathbb{N}_{0}$. Then we have%
\begin{equation}
S_{n+1}\left( x,y\right) =xS_{n}\left( x,y\right) +yC_{n}\left( x,y\right) .
\label{s2}
\end{equation}
\end{theorem}

\begin{proof}
By applying derivative operator to (\ref{6b}) with respect to $t$, we obtain
the following partial differential equation:%
\[
\frac{\partial }{\partial t}F_{S}\left( t,x,y\right) =xF_{S}\left(
t,x,y\right) +yF_{C}\left( t,x,y\right) . 
\]%
From the above equation, we get%
\[
\sum\limits_{n=0}^{\infty }S_{n+1}\left( x,y\right) \frac{t^{n}}{n!}%
=x\sum\limits_{n=0}^{\infty }S_{n}\left( x,y\right) \frac{t^{n}}{n!}%
+y\sum\limits_{n=0}^{\infty }C_{n}\left( x,y\right) \frac{t^{n}}{n!}. 
\]%
Comparing the coefficients of $\frac{t^{n}}{n!}$ on both sides of the above
equation, we arrive at the desired result.
\end{proof}

\begin{remark}
By using (\ref{CT}), (\ref{SU}) and (\ref{s2}), we arrive at the equation (%
\ref{TileU}). On the other hand, multiplying (\ref{s1}) by $x$ and (\ref{s2}%
) by $y$ and then side-by-side adding, and multiplying (\ref{s1}) by $y$ and
(\ref{s2}) by $x$ and then side-by-side subtracting, then after some
calculation, we arrive at the equation (\ref{1Y4a}).
\end{remark}


\begin{thebibliography}{99}
\bibitem{Abramowtz} M. Abramowitz, I.A. Stegun (Eds), Handbook of
Mathematical Functions with Formulas, Graphs, and Mathematical Tables.
Washington, D.C.: U.S. Dept. of Commerce, National Bureau of Standards, 1972.

\bibitem{Bailey} W.N. Bailey, Products of generalized Hypergeometric Series,
Proc. London Math. Soc. 28 (2) (1928) 242--254.

\bibitem{Bayad} A. Bayad, Y. Simsek, H.M. Srivastava, Some array type
polynomials associated with special numbers and polynomials, Appl. Math.
Comput. 244 (2014) 149--157.

\bibitem{benjamin} A.T. Benjamin, L. Ericksen, P. Jayawant, M. Shattuck,
Combinatorial trigonometry with Chebyshev polynomials, J. Stat. Plann.
Inference 140 (8) (2010) 2157--2160.

\bibitem{BrettiRicci} G. Bretti, P.E. Ricci, Multidimensional extensions of
the Bernoulli and Appell polynomials, Taiwanese J. Math. 8\textbf{\ }(3)
(2004) 415--428.

\bibitem{Choi} J. Choi, A.K. Rathie, A note on two results involving
products of generalized hypergeometric functions, Int. J. Math. Anal. 10
(12) (2016) 579--583.

\bibitem{comtet} L. Comtet, Advanced Combinatorics. D. Reidel Publication
Company, Dordrecht-Holland/\ Boston-U.S.A., 1974.

\bibitem{Daniel} W. Daniel, A Tiling Approach to Chebyshev Polynomials,
Harvey Mudd College, Department of Mathematics, Thesis, 2007.

\bibitem{daeeba rodrýguz} E.Y. Deeba, D.M. Rodriguez, Bernoulli numbers and
trigonometric functions, Int. J. Math. Educ. Sci. Technol. 21 (2) (1990)
275--282.

\bibitem{Milonavic} G.B. Djordjevic, G.V. Milovanovic, Special classes of
polynomials, University of Nis, Faculty of Technology Leskovac, 2014.

\bibitem{Dattoli} G. Dattoli, S. Lorenzutta, G. Maino, A. Torre, C.
Cesarano, Generalized Hermite polynomials and supergaussian forms, J. Math.
Anal. Appl. 203 (1996) 597--609.

\bibitem{datttoli2} G. Dattoli, C. Chiccoli, S. Lorenzutta, G. Maino, A.
Torre, Theory of generalized Hermite polynomials, Computers Math. Applic. 28
(4) (1994) 71--83.

\bibitem{Dere} R. Dere, Y. Simsek, H.M. Srivastava, A unified presentation
of three families of generalized Apostol type polynomials based upon the
theory of the umbral calculus and the umbral algebra, J. Number Theory 133
(2013) 3245--3263.

\bibitem{Dickson} L.E. Dickson, The analytic representation of substitutions
on a power of a prime number of letters with a discussion of the linear
group I,II, Ann. of Math. 11(1/6), (1897) 65--120, 161--183.

\bibitem{Fox} L. Fox, I.B. Parker, Chebyshev Polynomials in Numerical
Analysis, Oxford University Press, London, 1968.

\bibitem{gould} W. Gould, A.T. Hopper, Operational formulas connected with
two generalizations of Hermite polynomials, Duke Math. J. 29 (1962) 51-62.

\bibitem{Karande} B.K. Karande, N.K. Thakare, On the unification of
Bernoulli and Euler polynomials, Indian J. Pure Appl. Math. 6 (1975) 98--107.

\bibitem{KilarJKMS} N. Kilar, Y. Simsek, A new family of Fubini type numbers
and polynomials associated with Apostol--Bernoulli numbers and polynomials,
J. Korean Math. Soc. 54 (5) (2017) 1605--1621.

\bibitem{kilar} N. Kilar, Y. Simsek, Relations on Bernoulli and Euler
polynomials related to trigonometric functions, Adv. Stud. Contemp. Math. 29
(2) (2019) 191--198.

\bibitem{KimRyoo} T. Kim, C.S. Ryoo, Some Identities for Euler and Bernoulli
Polynomials and Their Zeros, Axioms 7(3), 56 (2018).

\bibitem{Lidl} R. Lidl, G.L. Mullen, G. Turnwald, Dickson polynomials,
Pitman Monographs and Surveys in Pure and Applied Mathematics. 65. Longman
Scientific \& Technical, Harlow; copublished in the United States with John
Wiley \& Sons, Inc., New York 1993.

\bibitem{LuoSrivastava} Q-M. Luo, H.M. Srivastava, Some generalizations of
the Apostol-Genocchi polynomials and the Stirling numbers of the second
kind, Appl. Math. Comput. 217 (2011) 5702--5728.

\bibitem{masjed2016} M. Masjed-Jamei, W. Koepf, Symbolic computation of some
power-trigonometric series, J. Symb. Comput. (2016)
http://dx.doi.org/10.1016/j.jsc.2016.03.004.

\bibitem{masjed2018} M. Masjed-Jamei , M.R. Beyki, W. Koepf, A new type of
Euler polynomials and numbers, Mediterr. J. Math. 15 (138) (2018) 1--17.

\bibitem{masjedgenocchi} M. Masjed-Jamei, M.Beyki, E. Omey, On a parametric
kind of Genocchi polynomials, J. Inequal. Spec. Funct. 9 (2) (2018) 68--81.

\bibitem{Ozden} H. Ozden, Y. Simsek, H.M. Srivastava, A unified presentation
of the generating functions of the generalized Bernoulli, Euler and Genocchi
polynomials, Comput. Math. Appl. 60 (2010) 2779--2787.

\bibitem{JohnPearson} J. Pearson, Computation of Hypergeometric Functions,
Dissertation, MSc in Mathematical Modelling and Scientific Computing.
University of Oxford, University of Oxford, 2009.

\bibitem{rasasisas} Th.M.Rassias, H.M. Srivastava, A. Yanushauskas, Topics
in Polynomials of One and Several Variables and Their Applications, World
Scientific, Singapore, 1991.

\bibitem{Rademacher} H. Rademacher, Topics in Analytic Number Theory,
Grundlehren Math. Wiss. 169 Springer-Verlag, Berlin, 1973.

\bibitem{Rivlin} T.J. Rivlin, The Chebyshev Polynomials, John Wiley \& Sons
Inc, New York, 1974.

\bibitem{SimsekRussian} Y. Simsek, Special functions related to
Dedekind-type DC-sums and their applications, Russ. J. Math. Phys. 17 (4)
(2010) 495--508.

\bibitem{SimsekRevista} Y. Simsek, Formulas for Poisson--Charlier, Hermite,
Milne-Thomson and other type polynomials by their generating functions and
p-adic integral approach, Rev. R. Acad. Cienc. Exactas F\'{\i}s. Nat., Ser.
A Mat. 113 (2) (2019) 931--948.

\bibitem{Srivastava1} H.M. Srivastava, G-D. Liu, Some identities and
congruences involving a certain family of numbers, Russ. J. Math. Phys. 16
(2009) 536--542.

\bibitem{Srivastava} H.M. Srivastava, Some generalizations and basic (or $q$%
-) extensions of the Bernoulli, Euler and Genocchi polynomials, Appl. Math.
Inform. Sci. 5 (2011) 390--444.

\bibitem{SrivastavaChoi2} H.M. Srivastava, J. Choi, Zeta and $q$-Zeta
Functions and Associated Series and Integrals. Elsevier Science Publishers,
Amsterdam, 2012.

\bibitem{SrivastavaMonaccha} H.M. Srivastava, H.L. Manocha, A Treatise on
Generating Functions, Wiley, New York, 1984.

\bibitem{Srivastava2018} H.M. Srivastava, M. Masjed-Jamei, M.R. Beyki, A
Parametric type of the Apostol-Bernoulli, Apostol-Euler and Apostol-Genocchi
polynomials, Appl. Math. Inf. Sci. 12 (5) (2018) 907-916.

\bibitem{kizilates} H.M. Srivastava, C. K\i z\i late\c{s}, A parametric kind
of the Fubini-type polynomials, Rev. R. Acad. Cienc. Exactas F\'{\i}s. Nat.,
Ser. A Mat. 1--15.

\bibitem{Tremblay} R. Tremblay, B.J. Fugere, Products of two restricted
Hypergeometric functions, J. Appl. Math. Anal. Appl. 198 (1996) 844--852.

\bibitem{Woit} P. Woit, Euler's Formula and Trigonometry, Preprint.

\bibitem{Qi} F. Qi, Derivatives of tangent function and tangent numbers,
Appl. Math. Comput. 268 (2015) 844--858.

\bibitem{Srivastava2} M.I. Qureshi, K.A. Quraishi, H.M. Srivastava, Some
hypergeometric summation formulas and series identities associated with
exponential and trigonometric functions,\textit{\ }Integral Transforms Spec.
Funct. 19 (4) (2008) 267--276.
\end{thebibliography}
\end{document}